\newtheorem {theorem}{Theorem}[section]
\newtheorem{assumption}[theorem]{Assumption}
\newtheorem{remark}[theorem]{{\itshape Remark}}
\newtheorem{lemma}[theorem]{Lemma}
\newtheorem{proposition}[theorem]{Proposition}
\newcommand{\Pb}{\mbox{\rm (P)}\xspace}
\newcommand{\Uad}{\mathcal{U}_{ad}}
\newcommand{\dx}{\,\mathrm{d}x}
\newcommand{\dive}{\text{div}}
\newcommand{\Ur}{\mathcal{U}_\rho}
\newcommand{\Pbe}{\mbox{\rm (P$_\varepsilon$)}\xspace}
\newcommand{\Pbek}{\mbox{\rm (P$_{\varepsilon_k}$)}\xspace}
\newcommand{\Pbr}{\mbox{\rm (P$_\rho$)}\xspace}
\newcommand{\Pbre}{\mbox{\rm (P$_{\rho,\varepsilon}$)}\xspace}
\numberwithin{equation}{section}
\title{New assumptions for stability analysis in elliptic optimal control problems\thanks{The first author was supported by MCIN/ AEI/10.13039/501100011033/ under research project PID2020-114837GB-I00. The second and third authors were supported by the Austrian Science Foundation (FWF) under grant No I4571.}}
\author{Eduardo Casas\thanks{Departamento de Matem\'atica Aplicada y Ciencias de la Computaci\'on, E.T.S.I. Industriales y de Telecomunicaci\'on, Universidad de Cantabria, Santander, Espa\~na, {\tt eduardo.casas@unican.es}}
\and Alberto Dom\'inguez Corella\thanks{Institute of Statistics and Mathematical Methods in Economics,
		Vienna University of Technology, Austria, {\tt alberto.corella@tuwien.ac.at}}
\and Nicolai Jork\thanks{Institute of Statistics and Mathematical Methods in Economics,
	Vienna University of Technology, Austria, {\tt nicolai.jork@tuwien.ac.at}}}
\begin{document}

\maketitle

\begin{abstract}
This paper is dedicated to the stability analysis of the optimal solutions of a control problem associated with a semilinear elliptic equation. The linear differential operator of the equation is neither monotone nor coercive due to the presence of a convection term. The control appears only linearly, or even it can not appear in an explicit form in the objective functional. Under new assumptions, we prove Lipschitz stability of the optimal controls and associated states with respect to perturbations in the equation and the objective functional as well as with respect to the Tikhonov regularization parameter.
\end{abstract}

\pagestyle{myheadings} \thispagestyle{plain} \markboth{E.~CASAS, A.~DOM\'{I}NGUEZ CORELLA, AND N.~JORK}{}

\section{Introduction}
\label{S1}

In this paper, we study the following optimal control problem
\[
   \Pb \  \min_{u \in \Uad} J(u) := \int_\Omega L(x,y_u(x),u(x))\dx,
\]
where $\Uad = \{u \in L^2(\Omega) : u_a \le u(x) \le u_b\ \text{ for a.a. } x \in \Omega\}$, $-\infty < u_a < u_b < +\infty$. Here, $y_u$ denotes the solution of the semilinear elliptic equation:
\begin{align}
\left\{\begin{array}{l}
-\dive\big(A(x)\nabla y\big)+b(x) \cdot \nabla y+f(x,y) = u\  \text{ in }\ \Omega,
\\ y=0 \ \text{ on }\ \Gamma.
\end{array} \right.
\label{E1.1}
\end{align}

Assumptions on the data of the control problem \Pb will be given below. The aim of this paper is to prove stability results for the local minimizers of \Pb with respect to perturbations in the data of the control problem. There are quite a few previous papers devoted to this issue \cite{Griesse-ZAA2006}, \cite{Hinze-Meyer-COAP2012}, \cite{KNCY-AMO2021}, \cite{Malanowski-Troltzsch-CC2000}, just to mention some of them. In all these cases, the second derivative of $L$ with respect to $u$ is bounded from below by a positive constant. This is the case where the Tikhonov term is involved in the objective functional. Under this condition and assuming sufficient second order optimality conditions (SSOC), the Lipschitz stability of the optimal controls is proved. Here, we assume that $u$ appears linearly in $L(x,y,u)$ or even it does not appear at all. Therefore, the previous results do not apply. In this case, under (SSOC) for optimality, Lipschitz stability of the optimal states can be proved; see \cite{CRT2015}. In Section \ref{S4}, we obtain analogous estimates for the optimal states replacing (SSOC) by a weaker condition; see \eqref{E3.13}. It is weaker because (SSOC) implies our assumption, but they are not equivalent. In addition, our assumption implies strict local optimality of the control; see Theorem \ref{T3.3}.

In order to prove stability of the optimal controls when they are not explicitly involved in the objective functional, besides (SSOC) an additional structural hypothesis is usually assumed. This situation was studied in \cite{Qui-Wachsmuth2018}, where the authors proved Lipschitz stability of the control  with respect to linear perturbations simultaneously appearing in the state equation and the objective functional. The drawback is that the additional hypothesis is satisfied only by bang-bang controls. Here, we obtain analogous estimates changing the mentioned assumption by a weaker one, see \eqref{E5.2}. Though this second assumption \eqref{E5.2} is stronger than \eqref{E3.13}, it can be satisfied by optimal controls independently if they are bang-bang or not. Moreover our assumption \eqref{E5.2} is satisfied if the (SSOC) and the additional hypothesis are assumed.

Finally, under the assumption \eqref{E5.2}, Lipschitz stability is established for the optimal states with respect to simultaneous perturbations in the equations and in the objective functional with respect to the state and the control, and with respect to the Tikhonov regularization parameter. The stability with respect to the Tikhonov regularization has been studied in \cite{CRT2015} and  \cite{PornerWachsmuth2018}. In \cite{CRT2015}, H\"older stability of the states is proved. In \cite{PornerWachsmuth2018}, stability of the control is proved under (SSOC) and the structural assumption. The reader is also referred to \cite{Daniels2018}, \cite{Wachsmuth-Wachsmuth2011a}, \cite{Wachsmuth-Wachsmuth2011b} for the case of linear partial differential equations.

In this paper, besides providing some new sufficient conditions for Lipschitz stability for the optimal control and associated states, we deal with a semilinear elliptic state equation that is neither monotone nor coercive. Though some crucial results for this state equation are taken from \cite{CMR2020}, some estimates have been proved that are not available in the literature.

The plan of this paper is as follows. In Section \ref{S2}, we analyze the state equation. First, we establish some properties of the linear differential operator of the state equation, and the full semilinear equation is analyzed in the second part of the section. The control problem \Pb is studied in Section \ref{S3}. We prove that our assumption \eqref{E3.13} is a sufficient condition for strong local optimality. Section \ref{S4} is dedicated to the proof of Lipschitz stability of the optimal states. In Section \ref{S5} we introduce the stronger condition \eqref{E5.2} replacing \eqref{E3.13} that allows us to establish the Lipschitz stability of the optimal controls. Finally, in Section \ref{S6}, the Tikhonov regularization is considered.

\section{Analysis of the partial differential equation}
\label{S2}

In this section we analyze the equation \eqref{E1.1}. We split the section in two parts. In the first part, we establish the results concerning the linear operator of the elliptic equation. In the second subsection, the nonlinear equation will be studied.

\subsection{Analysis of the linear differential operator}
\label{S2.1}

We define the differential operator $\mathcal{A}:H_0^1(\Omega) \longrightarrow H^{-1}(\Omega)$ by
\[
\mathcal{A}y = -\dive\big(A(x)\nabla y\big) + b(x)\cdot\nabla y.
\]
The following assumptions are supposed to hold throughout the paper. They ensure that the mathematical objects under consideration are well defined.

\begin{assumption}\label{A1}
	The following statements are fulfilled.
	\begin{itemize}
		\item[(i)] The set $\Omega\subset\mathbb R^n$, $n = 2, 3$, is a bounded domain with a Lipschitz boundary $\Gamma$. 
		The mapping $A:\Omega \longrightarrow \mathbb{R}^{n \times n}$  is measurable and bounded in $\Omega$, and there exists $\Lambda_A>0$ such that $\xi^\top A(x) \xi\ge \Lambda_A|\xi|^2$ for a.e. $x \in \Omega$ and all $\xi\in\mathbb R^n$.
		
		\item[(ii)] We assume that $b \in L^p(\Omega;\mathbb{R}^n)$ with $p \ge 3$ if $n = 3$ and $p > 2$ arbitrary if $n = 2$.
	\end{itemize}
\end{assumption}

Under these assumptions it is known that $\mathcal A:H_{0}^1(\Omega)\to H^{-1}(\Omega)$ is an isomorphism despite the fact that the operator is neither coercive nor monotone; see \cite{CMR2020}, \cite[Theorem 8.3]{Gilbarg-Trudinger83}, \cite{Trudinger73}. The following identity is satisfied
\begin{align*}
\langle \mathcal Ay, z\rangle = \int_{\Omega}A\nabla y\cdot\nabla z\dx+\int_{\Omega}b\cdot\nabla y z\dx\quad \forall y, z \in H_0^1(\Omega),
\end{align*}
where $\langle\cdot,\cdot\rangle$ denotes the duality pairing between $H^{-1}(\Omega)$ and $H_0^1(\Omega)$. 

Along this paper we will set
\[
\|y\|_{H_0^1(\Omega)} = \left(\int_\Omega|\nabla y(x)|^2\dx\right)^{\frac{1}{2}}.
\] 
The next lemma states some properties of $\mathcal{A}$ that will be used later.
\begin{lemma}
The following statements are fulfilled:
\begin{itemize}

	\item[(i)] There exists a constant $C_{\Lambda_A,b}$ such that G\r{a}rding's inequality holds
\begin{equation}
\langle \mathcal Ay, y\rangle \ge \frac{\Lambda_A}{4}\|y\|^2_{H_0^1(\Omega)} - C_{\Lambda_A,b}\|y\|^2_{L^2(\Omega)}\quad \forall y \in H_0^1(\Omega).
\label{E2.1}
\end{equation}
	\item[(ii)] Let $a \in L^\infty(\Omega)$  be a nonnegative function and $h \in H^{-1}(\Omega)$. If $y\in H_0^1(\Omega)$ satisfies $\mathcal Ay + ay = h$ and $h$ is a nonnegative linear form, then $y$ is a nonnegative function as well.	
	\item[(iii)] Let $a$ be as above and $h \in L^r(\Omega)$ with $r > \frac{n}{2}$. Then, the solution $y$ of the above equation belongs to $H_0^1(\Omega)\cap C(\bar\Omega)$. Moreover, there exists a constant $C_r$ independent of $a$ and $h$ such that
\begin{equation}
\|y\|_{H_0^1(\Omega)} + \|y\|_{C(\bar\Omega)}\le C_r\|h\|_{L^r(\Omega)}.
\label{E2.2}\
\end{equation}
\end{itemize}
\label{L2.1}
\end{lemma}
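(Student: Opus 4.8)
The plan is to establish the three assertions in turn, exploiting in each case that the convection term $b\cdot\nabla y$ is the only obstruction to the classical theory for symmetric coercive operators, and that the zeroth-order coefficient $a$ always enters with the favourable sign.

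For (i), I would write $\langle \mathcal{A}y, y\rangle = \int_\Omega A\nabla y\cdot\nabla y\dx + \int_\Omega (b\cdot\nabla y)\,y\dx$ and bound the principal part below by $\Lambda_A\|y\|_{H_0^1(\Omega)}^2$ using uniform ellipticity. The whole difficulty lies in the convection term. By H\"older's inequality with exponents $p$, $2$ and $s$ satisfying $\tfrac1s = \tfrac12 - \tfrac1p$, one has $|\int_\Omega (b\cdot\nabla y)y\dx| \le \|b\|_{L^p(\Omega)}\|\nabla y\|_{L^2(\Omega)}\|y\|_{L^s(\Omega)}$; Assumption \ref{A1}(ii) forces $s \le 6$ when $n=3$ and $s<\infty$ when $n=2$, so the Sobolev embedding $H_0^1(\Omega)\hookrightarrow L^s(\Omega)$ applies. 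To reach the structure of \eqref{E2.1} I would split $b = b_1 + b_2$ with $b_1\in L^\infty(\Omega;\mathbb{R}^n)$ and $\|b_2\|_{L^p(\Omega)}$ as small as desired (possible by truncation/density). The $b_2$ contribution is estimated by $C_S\|b_2\|_{L^p(\Omega)}\|\nabla y\|_{L^2(\Omega)}^2$ and absorbed into the principal term once $\|b_2\|_{L^p(\Omega)}$ is small; the $b_1$ contribution is bounded by $\|b_1\|_{L^\infty(\Omega)}\|\nabla y\|_{L^2(\Omega)}\|y\|_{L^2(\Omega)}$ and split by Young's inequality into a further piece of the principal term plus a multiple of $\|y\|_{L^2(\Omega)}^2$, which defines $C_{\Lambda_A,b}$; the remaining coefficient of $\|y\|_{H_0^1(\Omega)}^2$ is at least $\tfrac{\Lambda_A}{4}$.

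For (ii), I would test the equation with $y^- := \max\{-y,0\}\in H_0^1(\Omega)$. Since $\nabla y = -\nabla y^-$ on $\{y<0\}$ and $\nabla y^-=0$ elsewhere, a short computation gives $\langle \mathcal{A}y, y^-\rangle = -\langle \mathcal{A}y^-, y^-\rangle$ and $\int_\Omega a\,y\,y^-\dx = -\int_\Omega a (y^-)^2\dx$. As $y^-\ge 0$ and $h$ is a nonnegative form, the equation yields $\langle \mathcal{A}y^-, y^-\rangle = -\langle h, y^-\rangle - \int_\Omega a(y^-)^2\dx \le 0$. Here G\r{a}rding's inequality \eqref{E2.1} only gives $\|y^-\|_{H_0^1(\Omega)}^2 \le \tfrac{4C_{\Lambda_A,b}}{\Lambda_A}\|y^-\|_{L^2(\Omega)}^2$, which is insufficient; nonnegativity must instead come from the weak maximum principle for $\mathcal{A}+a\,\mathrm{Id}$ with $a\ge0$, valid under Assumption \ref{A1} (see \cite[Theorem 8.1]{Gilbarg-Trudinger83}, \cite{Trudinger73}, \cite{CMR2020}): $y$ is a supersolution with nonnegative boundary trace, hence $y\ge 0$ in $\Omega$.

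For (iii), I would obtain the $C(\bar\Omega)$ estimate and the $H_0^1(\Omega)$ estimate separately. Continuity and the $L^\infty$ bound follow from Stampacchia's truncation method combined with De Giorgi--Nash--Moser theory (see \cite[Theorems 8.16 and 8.29]{Gilbarg-Trudinger83}, \cite{CMR2020}); the essential point is that, $a\ge0$ entering with the good sign, the truncation estimate and its constant can be taken independent of both $a$ and $h$, giving $\|y\|_{C(\bar\Omega)}\le C_r\|h\|_{L^r(\Omega)}$ for $r>\tfrac n2$ (the sharp threshold for boundedness and continuity; continuity up to $\Gamma$ uses that Lipschitz domains satisfy an exterior cone condition). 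Granted this bound, I would test with $y$, drop $\int_\Omega a\,y^2\dx\ge0$, apply \eqref{E2.1}, and use $\langle h, y\rangle \le \|h\|_{L^r(\Omega)}\|y\|_{L^{r'}(\Omega)}$ together with $\|y\|_{L^2(\Omega)}^2 \le |\Omega|\,\|y\|_{C(\bar\Omega)}^2$ to absorb the G\r{a}rding defect $C_{\Lambda_A,b}\|y\|_{L^2(\Omega)}^2$, thereby controlling $\|y\|_{H_0^1(\Omega)}$ by $\|h\|_{L^r(\Omega)}$.

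The main obstacle is twofold. First, handling the convection term in (i) at the critical Sobolev exponent $n=3$, $p=3$, where $H_0^1(\Omega)\hookrightarrow L^6(\Omega)$ fails to be compact and a direct Ehrling-type argument is unavailable, forcing the $b_1+b_2$ splitting. Second, securing the $a$-independence of the constant in (ii)--(iii); this is precisely where the sign condition $a\ge0$ is indispensable, since it allows the maximum principle and the Stampacchia estimate to proceed uniformly, $a$ never appearing on the unfavourable side of any inequality.
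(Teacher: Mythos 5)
Your treatments of (i) and (ii) are fine and, for (i), more detailed than the paper's, which simply cites \cite{CMR2020} and \cite[Lemma 8.4]{Gilbarg-Trudinger83}; the splitting $b = b_1 + b_2$ with $b_2$ small in $L^p(\Omega;\mathbb{R}^n)$ is indeed the standard way to absorb the convection term at the critical exponent. For (ii) you correctly recognize that the testing argument with $y^-$ collapses (G\r{a}rding's inequality has the wrong sign structure to conclude $y^-=0$) and you fall back on the maximum-principle references, which is exactly what the paper does.

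The gap is in (iii), and it sits precisely at the one point that the paper actually proves rather than cites: the independence of the constant $C_r$ from $a$. You assert that ``the truncation estimate and its constant can be taken independent of both $a$ and $h$'' because $a\ge 0$ enters with the good sign, citing \cite[Theorems 8.16 and 8.29]{Gilbarg-Trudinger83} and \cite{CMR2020}. But as stated in those references the $L^\infty$ estimates carry constants depending on the structure of the full operator, and the zeroth-order coefficient enters the structure conditions (in the notation of \cite{Gilbarg-Trudinger83}, through the quantity bounding $\lambda^{-1}|d|$); the sign condition on the zeroth-order term is used there to get the maximum principle, not to remove its norm from the constants. So the citations do not deliver uniformity in $a$, and making your claim rigorous would force you to reopen the De Giorgi--Stampacchia iteration for this noncoercive operator ($b$ only in $L^p$, critical exponent), i.e., essentially to reprove the regularity theorem of \cite{Trudinger73}, \cite{CMR2020} with all constants tracked. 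The paper avoids this entirely by a comparison argument that uses only item (ii) and the known estimate for $a\equiv 0$: write $h = h^+ - h^-$, denote by $y_{a,h}$ the solution with coefficient $a$; then $\mathcal{A}(y_{a,h^+} - y_{0,h^+}) + a(y_{a,h^+} - y_{0,h^+}) = -a\,y_{0,h^+} \le 0$, whence $0 \le y_{a,h^+} \le y_{0,h^+}$ by (ii), and similarly $0 \le y_{a,h^-} \le y_{0,h^-}$; therefore $\|y_{a,h}\|_{C(\bar\Omega)} \le \|y_{0,h^+}\|_{C(\bar\Omega)} + \|y_{0,h^-}\|_{C(\bar\Omega)} \le 2C\|h\|_{L^r(\Omega)}$, with $C$ the constant of the case $a\equiv 0$, which is trivially $a$-free. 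Your derivation of the $H_0^1(\Omega)$ bound from the sup-norm bound (test with $y$, use the sign of $\int_\Omega a y^2\dx$, apply G\r{a}rding) coincides with the paper's and is correct once the $C(\bar\Omega)$ estimate is secured; you should replace the unproved uniform-Stampacchia claim by the comparison argument above.
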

\begin{proof}
The proof of \eqref{E2.1} can be found in \cite{CMR2020}; see also \cite[Lemma 8.4]{Gilbarg-Trudinger83}. For the proof of $(ii)$ the reader is referred again to \cite{CMR2020} and \cite[Theorem 8.1]{Gilbarg-Trudinger83}. The $H_0^1(\Omega)\cap C(\bar{\Omega})$ regularity of $y$ for functions $h\in L^r(\Omega)$ is well known; see \cite[Lemma 8.31]{Gilbarg-Trudinger83}. It remains to prove the estimates \eqref{E2.2} for a constant $C_r$ independent of $h$ and $a$. Let us denote by $y_{a,h} \in H_0^1(\Omega) \cap C(\bar\Omega)$ the solution of  $\mathcal A y + a y = h$.  With $y_{0,h}$ we denote the solution corresponding to $a \equiv 0$. Then, the estimate $\|y_{0,h}\|_{C(\bar\Omega)} \le C\|h\|_{L^r(\Omega)}$ is well known for a constant $C$ depending on $r$, but independent of $h$. Let us write $h = h^+ - h^-$. From $(ii)$ we know that $y_{a,h^+} \ge 0$ and $y_{a,h^-} \ge 0$. Now, since $\mathcal A(y_{a,h^+}-y_{0,h^+})+a(y_{a,h^+}-y_{0,h^+})= -ay_{0,h^+}$, again by item $(ii)$, we obtain $0\le y_{a,h^+} \le y_{0,h^+}$ and consequently $\|y_{a,h^+}\|_{C(\bar\Omega)}\le \|y_{0,h^+}\|_{C(\bar\Omega)}$. Analogously, by the same argument   $0\le y_{a,h^-} \le y_{0,h^-}$ and consequently $\|y_{a,h^-}\|_{C(\bar\Omega)}\le \|y_{0,h^-}\|_{C(\bar\Omega)}$. Therefore,
		\begin{align*}
			\|y_{a,h}\|_{C(\bar\Omega)}&\le \|y_{a,h^+}\|_{ C(\bar\Omega)}+\|y_{a,h^-}\|_{ C(\bar\Omega)}\le \|y_{0,h^+}\|_{ C(\bar\Omega)}+\|y_{0,h^-}\|_{ C(\bar\Omega)}\\
			&\le C\Big(\|h^+\|_{L^{r}(\Omega)}+\|h^-\|_{L^{r}(\Omega)}\Big)\le 2C\|h\|_{L^{r}(\Omega)},
		\end{align*}
where $C$ is independent of $a$ and $h$. To prove the corresponding estimate in $H_0^1(\Omega)$ we use G\r{a}rding's inequality \eqref{E2.1} and the above estimate:
\begin{align*}
&\frac{\Lambda_A}{4}\|y_{a,h}\|_{H^1_0(\Omega)}^2 \le \langle\mathcal{A}y_{a,h},y_{a,h}\rangle + C_{\Lambda_A,b}\|y_{a,h}\|^2_{L^2(\Omega)}\\
& \le \langle\mathcal{A}y_{a,h},y_{a,h}\rangle + \int_\Omega ay^2_{a,h}\dx + C_{\Lambda_A,b}\|y_{a,h}\|^2_{L^2(\Omega)}\\
&=\int_\Omega hy_{a,h}\dx + C_{\Lambda_A,b}\|y_{a,h}\|^2_{L^2(\Omega)} \le |\Omega|^{\frac{r-1}{r}}\|h\|_{L^r(\Omega)}\|y_{a,h}\|_{C(\bar\Omega)} +  C_{\Lambda_A,b}|\Omega|\|y_{a,h}\|^2_{C(\bar\Omega)}\\
& \le 2C\Big(|\Omega|^{\frac{r-1}{r}} + 2CC_{\Lambda_A,b}|\Omega|\Big)\|h\|^2_{L^r(\Omega)},
\end{align*}
where $|\Omega|$ denotes the Lebesgue measure of $\Omega$. Since the above constants are independent of $a$ and $h$, the inequality completes the proof of \eqref{E2.2}. 
\end{proof}

Now, we consider the adjoint operator $\mathcal A^*:H_{0}^1(\Omega)\to H^{-1}(\Omega)$ of $\mathcal A$. Since $\mathcal{A}$ is an isomorphism, $\mathcal{A}^*$ is also an isomorphism. It is obvious that $\mathcal A^*\varphi = -\dive\big(A^\top\nabla\varphi\big)-\dive\big(\varphi b\big)$. The operator $\mathcal{A}^*$ satisfies the same properties established in Lemma \ref{L2.1}. Indeed, the G\r{a}rding's inequality is a consequence of \eqref{E2.1} and the identity $\langle\mathcal{A}^*\varphi,\varphi\rangle = \langle\mathcal{A}\varphi,\varphi\rangle$. The proof of the estimate \eqref{E2.2} is the same for the operator $\mathcal{A}^*$. We only prove the statement $(ii)$. Let $h\in H^{-1}(\Omega)$ be a nonnegative linear form. This means that $\langle h,y\rangle \ge 0$ for every nonnegative function $y \in H_0^1(\Omega)$. Let $\varphi \in H_0^1(\Omega)$ satisfy $\mathcal A^*\varphi+a\varphi = h$. Now, given a nonnegative function $w \in L^2(\Omega)$ we take $y\in H_0^1(\Omega)$ satisfying $\mathcal{A}y + ay = w$. By Lemma \ref{L2.1}-$(ii)$ we have that  $y \ge 0$. Then, we obtain
\begin{align*}
\int_{\Omega}w\varphi\, dx=\langle \mathcal Ay+ ay, \varphi\rangle = \langle \mathcal A^*\varphi+ a\varphi,y \rangle = \langle h,y\rangle \ge 0.
\end{align*}
Since $w$ is an arbitrary nonnegative function of  $L^2(\Omega)$, this inequality yields  $\varphi \ge 0$.

We finish this subsection by proving an $L^s(\Omega)$ estimate.

\begin{lemma}
Assume that $s \in [1,\frac{n}{n - 2})$, $s'$ is its conjugate, and let $a \in L^\infty(\Omega)$ be a nonnegative function. Then, there exists a constant $C_{s'}$ independent of $a$ such that
\begin{equation}
\left\{\begin{array}{l}\displaystyle\|y_h\|_{L^s(\Omega)} \le C_{s'}\|h\|_{L^1(\Omega)},\\\displaystyle\|\varphi_h\|_{L^s(\Omega)} \le C_{s'}\|h\|_{L^1(\Omega)},\end{array}\right.\quad \forall h \in H^{-1}(\Omega) \cap L^1(\Omega),
\label{E2.3}
\end{equation}
where $y_h$ and $\varphi_h$ satisfy the equations $\mathcal Ay_h+ ay_h = h$ and $\mathcal A^*\varphi_h+ a\varphi_h = h$, respectively, and $C_{s'}$ is given by \eqref{E2.2} with $r = s'$.
\label{L2.2}
\end{lemma}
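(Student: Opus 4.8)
The plan is to prove both estimates by a duality argument, exploiting the fact that the operators $\mathcal{A}$ and $\mathcal{A}^*$ play symmetric roles and that both satisfy the regularity estimate \eqref{E2.2} of Lemma \ref{L2.1}-$(iii)$ with a constant independent of $a$. The first observation I would record is the exponent bookkeeping: the condition $s \in [1,\frac{n}{n-2})$ is exactly equivalent to $s' \in (\frac{n}{2},+\infty]$, so a test function $g \in L^{s'}(\Omega)$ has summability exponent $r = s'$ above the threshold $\frac{n}{2}$ required to apply Lemma \ref{L2.1}-$(iii)$. I would also note that, since $n = 2,3$ and $s < \frac{n}{n-2}$, the Sobolev embedding $H_0^1(\Omega) \hookrightarrow L^s(\Omega)$ guarantees $y_h, \varphi_h \in L^s(\Omega)$, so the quantities on the left-hand sides are finite to begin with.

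To estimate $\|y_h\|_{L^s(\Omega)}$ I would use the dual characterization $\|y_h\|_{L^s(\Omega)} = \sup\{\int_\Omega y_h g\dx : g \in L^{s'}(\Omega),\ \|g\|_{L^{s'}(\Omega)} \le 1\}$. Fix such a $g$ and let $\varphi_g \in H_0^1(\Omega)$ solve the adjoint equation $\mathcal A^*\varphi_g + a\varphi_g = g$. Because $s' > \frac{n}{2}$, Lemma \ref{L2.1}-$(iii)$ applied to $\mathcal{A}^*$ (the excerpt already verified that $\mathcal{A}^*$ satisfies the same estimates) gives $\varphi_g \in C(\bar\Omega)$ together with $\|\varphi_g\|_{C(\bar\Omega)} \le C_{s'}\|g\|_{L^{s'}(\Omega)} \le C_{s'}$, where $C_{s'}$ is the constant from \eqref{E2.2} with $r = s'$ and is independent of $a$. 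Then the same transposition identity used earlier in the subsection yields the chain
\begin{align*}
\int_\Omega y_h g\dx = \langle \mathcal A^*\varphi_g + a\varphi_g, y_h\rangle = \langle \mathcal A y_h + a y_h, \varphi_g\rangle = \langle h, \varphi_g\rangle = \int_\Omega h\varphi_g\dx,
\end{align*}
whence $|\int_\Omega y_h g\dx| \le \|h\|_{L^1(\Omega)}\|\varphi_g\|_{C(\bar\Omega)} \le C_{s'}\|h\|_{L^1(\Omega)}$. Taking the supremum over admissible $g$ gives the first inequality of \eqref{E2.3}.

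The second inequality is proved by the mirror-image argument: for $g \in L^{s'}(\Omega)$ with $\|g\|_{L^{s'}(\Omega)} \le 1$, let $y_g$ solve the \emph{forward} equation $\mathcal A y_g + a y_g = g$, apply Lemma \ref{L2.1}-$(iii)$ to $\mathcal{A}$ to obtain $\|y_g\|_{C(\bar\Omega)} \le C_{s'}$, and run the same transposition the other way around, $\int_\Omega \varphi_h g\dx = \langle \mathcal A y_g + a y_g, \varphi_h\rangle = \langle \mathcal A^*\varphi_h + a\varphi_h, y_g\rangle = \langle h, y_g\rangle = \int_\Omega h y_g\dx$, and bound by $C_{s'}\|h\|_{L^1(\Omega)}$.

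The step I expect to require the most care is the justification of the identity $\langle h, \varphi_g\rangle = \int_\Omega h\varphi_g\dx$ (and its analogue with $y_g$): the left-hand side is the $H^{-1}$--$H_0^1$ duality pairing, while the right-hand side is an $L^1$--$L^\infty$ integral. These coincide precisely because $h \in H^{-1}(\Omega) \cap L^1(\Omega)$ and the test functions $\varphi_g, y_g$ belong to $H_0^1(\Omega) \cap C(\bar\Omega) \subset H_0^1(\Omega)\cap L^\infty(\Omega)$; I would argue this identification by density (approximating $\varphi_g$ in $H_0^1$ by smooth functions while keeping $L^\infty$ control, so that both pairings pass to the limit). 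The only other point to keep in mind is the uniformity of the constant: since the $C_{s'}$ supplied by \eqref{E2.2} is independent of both $a$ and the right-hand side, the bounds obtained for $\|y_h\|_{L^s(\Omega)}$ and $\|\varphi_h\|_{L^s(\Omega)}$ inherit independence of $a$, which is the assertion of the lemma.
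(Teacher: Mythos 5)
Your proof is correct and takes essentially the same route as the paper: a transposition argument combining the identity $\langle\mathcal{A}y,\varphi\rangle=\langle\mathcal{A}^*\varphi,y\rangle$ with the $a$-independent $C(\bar\Omega)$ estimate of Lemma \ref{L2.1}-$(iii)$ applied with $r=s'$. The only (cosmetic) difference is that the paper tests against the single canonical function $|\varphi_h|^{s-1}\mathrm{sign}(\varphi_h)\in L^{s'}(\Omega)$ and divides by $\|\varphi_h\|^{s-1}_{L^s(\Omega)}$, whereas you take the supremum over the whole unit ball of $L^{s'}(\Omega)$.
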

\begin{proof} 
We prove the estimate \eqref{E2.3} for $\varphi_h$ and $n=3$, the proof being identical for $y_h$ and analogous for $n=2$ with minor modifications. First we observe that $H_0^1(\Omega) \subset L^{6}(\Omega) \subset L^{3}(\Omega)$, hence $\varphi_h \in L^s(\Omega)$. As a consequence we obtain that $|\varphi_h|^{s - 1}\text{sign}(\varphi_h) \in L^{s'}(\Omega)$. Moreover, $s < 3$ implies that $s' > \frac{3}{2}$. According to Lemma \ref{L2.1}-$(iii)$, the solution of $\mathcal{A}y + ay = |\varphi_h|^{s - 1}\text{sign}(\varphi_h)$ belongs to $H_0^1(\Omega) \cap C(\bar\Omega)$ and satisfies $\|y\|_{C(\bar\Omega)} \le C_{s'}\||\varphi_h|^{s - 1}\text{sign}(\varphi_h)\|_{L^{s'}(\Omega)} = C_{s'}\|\varphi_h\|^{s - 1}_{L^s(\Omega)}$, where $C_{s'}$ is independent of $a$ and $h$. Using these facts we infer
\begin{align*}
&\|\varphi_h\|^s_{L^s(\Omega)} = \int_{\Omega}|\varphi_h|^s\dx=\langle \mathcal Ay+ay,\varphi_h\rangle = \langle \mathcal A^*\varphi_h+a\varphi_h, y\rangle\\
& = \int_{\Omega}hy\dx \le \|h\|_{L^1(\Omega)}\|y\|_{C(\bar\Omega)} \le C_{s'}\|h\|_{L^1(\Omega)}\|\varphi_h\|^{s - 1}_{L^s(\Omega)}.
\end{align*}
This proves \eqref{E2.3} for $\varphi_h$. 
\end{proof}

\subsection{Analysis of the semilinear equation}

In this subsection, we formulate some results concerning the semilinear equation \eqref{E1.1}. For this purpose we make the following assumptions on the nonlinear term of the equation.

\begin{assumption}\label{A2}
We assume that $f:\Omega \times \mathbb{R}\longrightarrow \mathbb{R}$ is a Carath\'eodory function of class $C^2$ with respect to the second variable satisfying:
\begin{align}
&f(\cdot,0)\in L^r(\Omega)  \text{ with } r> \frac{n}{2} \text{ and } \frac{\partial f}{\partial y}(x,y) \geq 0 \  \forall y \in \mathbb{R},
\label{E2.4}\\
&\forall M>0\ \exists C_{f,M}>0 \text{ such that }\left|\frac{\partial f}{\partial y}(x,y)\right|+
\left|\frac{\partial^2 f}{\partial y^2}(x,y)\right| \leq C_{f,M} \ \forall |y| \leq M,
\label{E2.5}\\
&\left\{\begin{array}{l}\displaystyle\forall M > 0 \text{ and } \forall \varepsilon > 0\ \exists \delta > 0 \text{ such that}\vspace{1mm}\\\displaystyle
\left|\frac{\partial^2f}{\partial y^2}(x,y_2) - \frac{\partial^2f}{\partial y^2}(x,y_1)\right| < \varepsilon \mbox{ if } |y_1|, |y_2| \leq M \text{ and }  |y_2 - y_1| \le \delta,\end{array}\right.
\label{E2.6}
\end{align}
for almost every $x \in \Omega$.
\end{assumption}

\begin{theorem}
	Let Assumptions \ref{A1} and \ref{A2}  hold. If $u$ belongs to $L^r(\Omega)$ for some $r>n/2$, then there exists a unique solution $y_u\in H_0^1(\Omega)\cap C(\bar\Omega)$ of \eqref{E1.1}. Moreover, there exists a constant $K_{f,r}$ independent of $u$ such that
\begin{equation}
\|y_u\|_{H_0^1(\Omega)} + \|y_u\|_{C(\bar\Omega)} \le K_{f,r}\big(\|u\|_{L^r(\Omega)} + \|f(\cdot,0)\|_{L^r(\Omega)} + 1\big).
\label{E2.7}
\end{equation}
Further, if $\{u_k\}_{k = 1}^\infty$ is a sequence converging weakly to $u$ in $L^r(\Omega)$, then $y_{u_k} \to y_u$ strongly in $H_0^1(\Omega) \cap C(\bar\Omega)$.
\label{T2.1}
\end{theorem}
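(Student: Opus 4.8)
The plan is to exploit the monotonicity of the nonlinearity in order to reduce the semilinear problem to a family of linear problems of the type already solved in Lemma~\ref{L2.1}, and to take full advantage of the fact that the constant in \eqref{E2.2} does \emph{not} depend on the zeroth-order coefficient. Since $\frac{\partial f}{\partial y}\ge 0$ by \eqref{E2.4}, for every $y$ I would write $f(x,y)=f(x,0)+c(x,y)\,y$ with the nonnegative divided difference $c(x,y):=\int_0^1 \frac{\partial f}{\partial y}(x,ty)\,\mathrm{d}t$, so that \eqref{E1.1} becomes $\mathcal A y + c(x,y)\,y = u - f(\cdot,0)=:\tilde h\in L^r(\Omega)$. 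Uniqueness is then immediate: if $y_1,y_2$ both solve it, the coefficient $a(x):=\int_0^1 \frac{\partial f}{\partial y}(x,y_2+t(y_1-y_2))\,\mathrm{d}t$ is nonnegative and, by \eqref{E2.5}, in $L^\infty(\Omega)$, and the difference satisfies $\mathcal A(y_1-y_2)+a\,(y_1-y_2)=0$; applying Lemma~\ref{L2.1}(ii) to $\pm(y_1-y_2)$ with $h=0$ forces $y_1=y_2$.

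For existence and the bound \eqref{E2.7} I would set up a Schauder fixed point argument in $C(\bar\Omega)$. Given $w\in C(\bar\Omega)$, put $a_w:=c(\cdot,w)$, which is nonnegative and, by \eqref{E2.5}, lies in $L^\infty(\Omega)$, and let $\Phi(w)$ be the unique solution of the linear equation $\mathcal A y + a_w y = \tilde h$ furnished by Lemma~\ref{L2.1}. The decisive feature is that the estimate \eqref{E2.2} is independent of $a_w$, so $\|\Phi(w)\|_{H_0^1(\Omega)}+\|\Phi(w)\|_{C(\bar\Omega)}\le C_r\|\tilde h\|_{L^r(\Omega)}=:R$ for \emph{every} $w$; hence $\Phi$ maps the closed ball $\{w:\|w\|_{C(\bar\Omega)}\le R\}$ into itself. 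Compactness of $\Phi$ comes from the De Giorgi--Nash--Moser Hölder bound underlying the $C(\bar\Omega)$-regularity of Lemma~\ref{L2.1}(iii) together with Arzel\`a--Ascoli, and continuity is routine: if $w_n\to w$ uniformly then $a_{w_n}\to a_w$ boundedly a.e., and passing to the limit in the linear equation (using the uniform bounds and uniqueness) gives $\Phi(w_n)\to\Phi(w)$. Schauder's theorem then produces a fixed point $y_u=\Phi(y_u)$, which solves \eqref{E1.1}, and \eqref{E2.7} is exactly the $a$-independent estimate with $K_{f,r}=C_r$ (the constant $+1$ merely absorbs slack).

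For the stability statement, let $u_k\rightharpoonup u$ in $L^r(\Omega)$; being weakly convergent the $u_k$ are bounded, so \eqref{E2.7} bounds $\{y_{u_k}\}$ in $H_0^1(\Omega)\cap C(\bar\Omega)$, and since the right-hand sides of $\mathcal A y_{u_k}=u_k-f(\cdot,y_{u_k})$ are then bounded in $L^r(\Omega)$ we also obtain a uniform Hölder bound on $y_{u_k}$. By Arzel\`a--Ascoli and reflexivity a subsequence converges weakly in $H_0^1(\Omega)$ and uniformly in $C(\bar\Omega)$ to some $\bar y$; uniform convergence lets $f(\cdot,y_{u_k})\to f(\cdot,\bar y)$ pass to the limit, weak $H_0^1$-convergence handles the principal and convection terms (here $b\in L^p$ with $p\ge 3$ is precisely what allows pairing $b\cdot\nabla y_{u_k}$ against $H_0^1$-test functions), and weak $L^r$-convergence handles $\int_\Omega u_k v\dx$. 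Thus $\bar y$ solves \eqref{E1.1}, so $\bar y=y_u$ by uniqueness and the whole sequence converges. To upgrade to strong $H_0^1$-convergence I would test the difference equation $\mathcal A(y_{u_k}-y_u)+a_k(y_{u_k}-y_u)=u_k-u$ (with $a_k\ge 0$ the corresponding divided difference) against $y_{u_k}-y_u$ and use Gårding's inequality \eqref{E2.1}: dropping the nonnegative term $\int_\Omega a_k(y_{u_k}-y_u)^2\dx$ yields $\frac{\Lambda_A}{4}\|y_{u_k}-y_u\|_{H_0^1(\Omega)}^2\le C_{\Lambda_A,b}\|y_{u_k}-y_u\|_{L^2(\Omega)}^2+\int_\Omega (u_k-u)(y_{u_k}-y_u)\dx$, whose right-hand side tends to zero because $y_{u_k}\to y_u$ uniformly (hence in $L^2(\Omega)$ and in $L^{r'}(\Omega)$) while $\|u_k-u\|_{L^r(\Omega)}$ stays bounded.

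The main obstacle, and the reason the argument is not entirely standard, is that $\mathcal A$ is neither coercive nor monotone due to the convection term, so existence cannot be read off from monotone-operator theory and an a priori bound is not automatic. The whole scheme hinges on the $a$-independence of the constant in \eqref{E2.2}, which at once supplies the invariant ball for Schauder and the estimate \eqref{E2.7}. The only delicate point in the continuity statement is dual to this: because $u_k$ converges only weakly in $L^r(\Omega)$, one cannot deduce $C(\bar\Omega)$-convergence directly from the linear estimate, which is why the compactness (Hölder) step must precede the Gårding-based upgrade to strong $H_0^1$-convergence.
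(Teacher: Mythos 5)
Your overall strategy is sound, and it is essentially the standard route: the paper itself offers no proof of Theorem~\ref{T2.1} (it defers to \cite{CMR2020}), and your reduction $f(x,y)=f(x,0)+c(x,y)y$, the maximum-principle uniqueness via Lemma~\ref{L2.1}$(ii)$, the invariant ball obtained from the $a$-independence of the constant in \eqref{E2.2}, and the G\r{a}rding-based upgrade to strong $H_0^1(\Omega)$ convergence are all correct. The genuine gap lies at the two places where you claim compactness in $C(\bar\Omega)$: the compactness of $\Phi$ in the Schauder step, and the ``uniform H\"older bound'' invoked for $\{y_{u_k}\}$ in the weak-continuity part. Lemma~\ref{L2.1}$(iii)$ provides a sup-norm bound uniform in $a$, but \emph{no modulus of continuity}, and the continuity of each individual solution does not give equicontinuity of the family $\Phi(B_R)$. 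What you need is a H\"older (or modulus-of-continuity) estimate uniform over $\{a\in L^\infty(\Omega): 0\le a\le C_{f,R}\}$, and this is not supplied by ``standard De Giorgi--Nash--Moser'': Assumption~\ref{A1}$(ii)$ allows the critical exponent $p=n=3$, whereas the textbook H\"older estimates (Chapter 8 of \cite{Gilbarg-Trudinger83}) require drift in $L^p$ with $p>n$. The critical case is precisely the delicate point addressed in \cite{Trudinger73} and \cite{CMR2020} --- it is the reason this theorem is not routine --- so your appeal to ``the H\"older bound underlying Lemma~\ref{L2.1}$(iii)$'' assumes what has to be proved. (The needed equicontinuity is in fact true, because the absolute continuity of $\int_\Omega|b|^3\dx$ makes the drift locally small so that a uniform modulus of continuity survives, but this must be argued or cited from the critical-exponent literature, not read off from the paper's lemma.)

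Both occurrences can be dealt with. For existence, you can bypass H\"older estimates entirely by running Schauder in $L^2(\Omega)$ rather than in $C(\bar\Omega)$: take $K=\{w\in L^2(\Omega):\ \|w\|_{L^\infty(\Omega)}\le R\}$, which is closed, convex and bounded in $L^2(\Omega)$; your self-map and continuity arguments go through verbatim (using a.e.\ convergent subsequences and uniqueness for the linear problem), and compactness is now free because $\Phi(K)$ is bounded in $H_0^1(\Omega)$, hence precompact in $L^2(\Omega)$ by Rellich; the fixed point lies in $H_0^1(\Omega)\cap C(\bar\Omega)$ automatically since the range of $\Phi$ does. For the weak-continuity statement an equicontinuity input is unavoidable, but it can be localized to the single fixed operator $\mathcal{A}$: with $v_k$ the solution of $\mathcal{A}v_k=u_k-u$ and $\phi_k=y_{u_k}-y_u$, the function $\phi_k-v_k$ solves $\mathcal{A}(\phi_k-v_k)+a_k(\phi_k-v_k)=-a_kv_k$ with $0\le a_k\le C_{f,M}$, so \eqref{E2.2} (uniform in $a_k$) gives
\[
\|\phi_k\|_{C(\bar\Omega)}\le \big(1+C_rC_{f,M}|\Omega|^{1/r}\big)\|v_k\|_{C(\bar\Omega)}.
\]
Hence everything reduces to showing that $\mathcal{A}^{-1}:L^r(\Omega)\to C(\bar\Omega)$ maps weakly convergent sequences to uniformly convergent ones, i.e.\ to the compactness of this one operator; that is the single statement you must take from the critical-drift regularity theory of \cite{Trudinger73}, \cite{CMR2020}.
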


The reader is referred to \cite{CMR2020} for the proof of this result. As a consequence of \eqref{E2.7} we get
\begin{equation}
\exists K_U > 0 \text{ such that } \|y_u\|_{H_0^1(\Omega)} + \|y_u\|_{C(\bar\Omega)} \le K_U\quad \forall u \in \Uad.
\label{E2.8}
\end{equation}

For each $r>n/2$, we define the map $G_r:L^r(\Omega)\to H_0^1(\Omega)\cap C(\bar\Omega)$ by $G_r(u)=y_u$.

\begin{theorem}
		Let Assumptions \ref{A1} and \ref{A2} hold. For every $r > \frac{n}{2}$ the map $G_r$ is of class $C^2$, and the first and second derivatives at $u\in L^r(\Omega)$ in the directions $v, v_1, v_2\in L^r(\Omega)$, denoted by $z_{u,v} = G'_r(u)v$ and $z_{u,v_1,v_2} = G''_r(u)(v_1,v_2)$, are the solutions of the equations 
\begin{align}
&\mathcal Az+\frac{\partial f}{\partial y}(x,y_u)z = v,\label{E2.9}\\
&\mathcal Az+\frac{\partial f}{\partial y}(x,y_u)z = -\frac{\partial^2f}{\partial y^2}(x,y_u)z_{u,v_1}z_{u,v_2},\label{E2.10}
\end{align}
respectively.
\label{T2.2}
\end{theorem}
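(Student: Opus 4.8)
The plan is to prove that $G_r$ is of class $C^2$ by applying the implicit function theorem to a suitable mapping, and then to identify the derivatives by formally differentiating the state equation. First I would define the map
\[
F: L^r(\Omega) \times \big(H_0^1(\Omega) \cap C(\bar\Omega)\big) \longrightarrow H^{-1}(\Omega) \cap L^r(\Omega), \qquad F(u,y) = \mathcal{A}y + f(\cdot,y) - u,
\]
whose zeros are exactly the pairs $(u, y_u)$. By Theorem \ref{T2.1} the equation $F(u,y) = 0$ has a unique solution $y_u$ for each $u$, so $G_r$ is well defined and $F(u, G_r(u)) = 0$. The substance of the argument is to show that $F$ is of class $C^2$ and that its partial derivative $\partial_y F(u,y)$ is a (boundedly invertible) isomorphism at each solution pair, so that the implicit function theorem grants that $G_r$ inherits the $C^2$ regularity and supplies its derivatives.

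The key steps, in order, are as follows. First I would verify that the Nemytskii operator $y \mapsto f(\cdot, y)$ is of class $C^2$ from $C(\bar\Omega)$ into $L^r(\Omega)$: this is where Assumption \ref{A2} is used, with \eqref{E2.5} giving local boundedness of the first and second derivatives (hence the operator maps into $L^\infty(\Omega) \subset L^r(\Omega)$ on bounded sets) and the uniform continuity condition \eqref{E2.6} guaranteeing that $y \mapsto \frac{\partial^2 f}{\partial y^2}(\cdot,y)$ is continuous, so the second derivative depends continuously on the base point. Since $\mathcal{A}$ is linear and bounded and $u \mapsto u$ is linear, $F$ is then $C^2$. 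Second, I would compute $\partial_y F(u,y)z = \mathcal{A}z + \frac{\partial f}{\partial y}(\cdot,y)z$. Evaluated at $(u,y_u)$, the coefficient $a := \frac{\partial f}{\partial y}(\cdot, y_u)$ is nonnegative by \eqref{E2.4} and lies in $L^\infty(\Omega)$ by \eqref{E2.5} and \eqref{E2.8}; hence the results of Lemma \ref{L2.1} and the discussion following it show that $\mathcal{A} + a\,\mathrm{Id}$ is an isomorphism onto $H^{-1}(\Omega)$ mapping $L^r(\Omega)$ data to $H_0^1(\Omega) \cap C(\bar\Omega)$ with the uniform bound \eqref{E2.2}. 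Thus $\partial_y F(u,y_u)$ is invertible in the required functional setting. Third, the implicit function theorem yields that $G_r$ is $C^2$ and that $G_r'(u) = -[\partial_y F(u,y_u)]^{-1} \partial_u F(u,y_u)$; since $\partial_u F = -\mathrm{Id}$, this gives precisely $\mathcal{A}z_{u,v} + \frac{\partial f}{\partial y}(\cdot,y_u)z_{u,v} = v$, which is \eqref{E2.9}.

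To obtain the second derivative \eqref{E2.10}, I would differentiate the identity $\mathcal{A}z_{u,v} + \frac{\partial f}{\partial y}(\cdot, y_u)z_{u,v} = v$ with respect to $u$ in a direction $v_2$, keeping $v = v_1$ fixed. Applying the chain rule to the term $\frac{\partial f}{\partial y}(\cdot, y_u)z_{u,v_1}$ produces $\frac{\partial^2 f}{\partial y^2}(\cdot, y_u)\,z_{u,v_2}\,z_{u,v_1} + \frac{\partial f}{\partial y}(\cdot, y_u)\,z_{u,v_1,v_2}$, and since the right-hand side $v_1$ is independent of $u$ its derivative vanishes; rearranging gives \eqref{E2.10}. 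The main obstacle I anticipate is the careful verification of the second step of the first item, namely the $C^2$ differentiability of the Nemytskii operator into $L^r(\Omega)$ with the correct target space: one must be attentive that differentiability of superposition operators on $L^p$ spaces is delicate, and it is precisely the uniform modulus-of-continuity hypothesis \eqref{E2.6} that rescues the continuity of the second derivative and prevents the standard pathologies. Everything else reduces to the isomorphism property already established for $\mathcal{A} + a\,\mathrm{Id}$ and to routine bookkeeping with the chain rule.
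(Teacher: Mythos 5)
Your overall strategy --- the implicit function theorem applied to $F(u,y)=\mathcal{A}y+f(\cdot,y)-u$, with the Nemytskii analysis resting on \eqref{E2.5}--\eqref{E2.6} and the linearized solvability on Lemma \ref{L2.1} --- is exactly the route the paper takes (it simply cites \cite{CMR2020} for the details). However, there is a genuine gap in your functional-analytic setup: the map $F$ does not take values in $H^{-1}(\Omega)\cap L^r(\Omega)$. For $y\in H_0^1(\Omega)\cap C(\bar\Omega)$, the term $\mathcal{A}y=-\mathrm{div}\big(A\nabla y\big)+b\cdot\nabla y$ is merely a distribution in $H^{-1}(\Omega)$; with $A$ only bounded and measurable and $y$ only $H^1\cap C^0$, there is no reason for $\mathcal{A}y$ to lie in $L^r(\Omega)$. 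So with your stated codomain $F$ is not well defined, while with the honest codomain $H^{-1}(\Omega)$ the partial derivative $\partial_yF(u,y_u)=\mathcal{A}+a\,\mathrm{Id}$, where $a=\frac{\partial f}{\partial y}(\cdot,y_u)$, is \emph{not} an isomorphism from $H_0^1(\Omega)\cap C(\bar\Omega)$ onto $H^{-1}(\Omega)$: it is injective but not surjective, because the solution of $\mathcal{A}z+az=h$ for general $h\in H^{-1}(\Omega)$ need not be continuous. In either reading the hypothesis of the implicit function theorem fails; this is precisely the point that your phrase ``invertible in the required functional setting'' glosses over.

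The standard repair (and the one used in the cited reference) is to introduce the graph space $Y=\{y\in H_0^1(\Omega)\cap C(\bar\Omega):\mathcal{A}y\in L^r(\Omega)\}$, normed by $\|y\|_{H_0^1(\Omega)}+\|y\|_{C(\bar\Omega)}+\|\mathcal{A}y\|_{L^r(\Omega)}$, and to consider $F:L^r(\Omega)\times Y\to L^r(\Omega)$. Then $F$ is well defined and of class $C^2$ (your Nemytskii argument goes through unchanged), and $\partial_yF(u,y_u)=\mathcal{A}+a\,\mathrm{Id}:Y\to L^r(\Omega)$ \emph{is} an isomorphism: injectivity follows from Lemma \ref{L2.1}-$(ii)$, surjectivity from Lemma \ref{L2.1}-$(iii)$ because the solution for $L^r(\Omega)$ data lies in $H_0^1(\Omega)\cap C(\bar\Omega)$ and then automatically in $Y$ since $\mathcal{A}y=h-ay\in L^r(\Omega)$, and continuity of the inverse follows from \eqref{E2.2} (or the open mapping theorem). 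Since $Y$ embeds continuously into $H_0^1(\Omega)\cap C(\bar\Omega)$, the implicit function theorem then gives $G_r\in C^2$ with values in the desired space, and your identification of the first and second derivatives (differentiating the identity $F(u,G_r(u))=0$ once and twice) is correct as written.
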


The proof of this theorem is an easy application of the implicit function theorem; see \cite{CMR2020}.

\begin{lemma}
The following statements are fulfilled.
\begin{itemize}
\item[(i)] Suppose that $r > \frac{n}{2}$ and $s \in [1,\frac{n}{n - 2})$. Then, there exist constants $K_r$ depending on $r$ and $M_s$ depending on $s$ such that for every $u, \bar u \in \Uad$
\begin{align}
&\|y_u - y_{\bar u} - z_{\bar u,u - \bar u}\|_{ C(\bar\Omega)}\le K_r\|y_u - y_{\bar u}\|^2_{L^{2r}(\Omega)},
\label{E2.11}\\
&\|y_u - y_{\bar u} - z_{\bar u,u - \bar u}\|_{L^{s}(\Omega)}\le M_s\|y_u-y_{\bar u}\|^2_{L^{2}(\Omega)}.
\label{E2.12}
\end{align}
\item[(ii)] Taking $C_X = K_2\sqrt{|\Omega|}$ if $X = C(\bar\Omega)$ and $C_X = M_2$ if $X = L^2(\Omega)$, the following inequality holds
\begin{equation}
\|z_{u,v} - z_{\bar u,v}\|_X \le C_X\|y_u - y_{\bar u}\|_X\|z_{\bar u,v}\|_X\ \forall u,\bar u \in \Uad \text{ and } \forall v \in L^2(\Omega).
\label{E2.13}
\end{equation}
\item[(iii)] Let $X$ be as in $(ii)$. There exists $\varepsilon > 0$ such that for all $\bar u, u \in \Uad$ with $\|y_{u} - y_{\bar u}\|_{C(\bar\Omega)} \le \varepsilon$ the following inequalities are satisfied
\begin{align}
&\frac{1}{2}\|y_{u} - y_{\bar u}\|_X \le \|z_{\bar u,u - \bar u}\|_X \le \frac{3}{2}\|y_{u} - y_{\bar u}\|_X,
\label{E2.14}\\
&\frac{1}{2}\|z_{\bar u,v}\|_X \le \|z_{u,v}\|_X \le \frac{3}{2}\|z_{\bar u,v}\|_X \quad \forall v \in L^2(\Omega).\label{E2.15}
\end{align}
\end{itemize}
\label{L2.3}
\end{lemma}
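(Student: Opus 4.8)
The plan is to establish the three items in order, using the Taylor-type estimates for the control-to-state map $G_r$ together with the uniform bounds from Lemma~\ref{L2.1} and Lemma~\ref{L2.2}. The common engine throughout is that the remainder $r_{\bar u,u}:=y_u-y_{\bar u}-z_{\bar u,u-\bar u}$ satisfies a linear equation of the form $\mathcal A w+\frac{\partial f}{\partial y}(x,y_{\bar u})w = g$ with a right-hand side $g$ that is quadratic in $y_u-y_{\bar u}$; one then reads off the desired estimate by applying the appropriate a priori bound.

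For item~(i), I would subtract \eqref{E1.1} written at $u$ and at $\bar u$ from \eqref{E2.9} and use a first-order Taylor expansion of $f(x,\cdot)$ about $y_{\bar u}$. Writing $\delta y = y_u-y_{\bar u}$, the remainder $r_{\bar u,u}$ solves
\begin{equation*}
\mathcal A r_{\bar u,u}+\frac{\partial f}{\partial y}(x,y_{\bar u})\,r_{\bar u,u}
= -\Bigl(\frac{\partial f}{\partial y}(x,\theta)-\frac{\partial f}{\partial y}(x,y_{\bar u})\Bigr)\delta y,
\end{equation*}
where $\theta$ is an intermediate value; by a second Taylor step this right-hand side is bounded pointwise by $\tfrac12 C_{f,M}\,|\delta y|^2$, with $M=K_U$ from \eqref{E2.8} and $C_{f,M}$ from \eqref{E2.5}. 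The coefficient $a=\frac{\partial f}{\partial y}(x,y_{\bar u})$ is nonnegative by \eqref{E2.4} and bounded by \eqref{E2.5}, so I can invoke the uniform estimates. For \eqref{E2.11} I apply \eqref{E2.2} with $r$ replaced by $r$ and note $\||\delta y|^2\|_{L^r}=\|\delta y\|_{L^{2r}}^2$, yielding the $C(\bar\Omega)$ bound with $K_r$ independent of $u,\bar u$. For \eqref{E2.12} I instead apply Lemma~\ref{L2.2} (the $L^s$–$L^1$ estimate \eqref{E2.3}) to the same equation, using $\||\delta y|^2\|_{L^1}=\|\delta y\|_{L^2}^2$, which gives the constant $M_s$.

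For item~(ii), the difference $z_{u,v}-z_{\bar u,v}$ solves, after subtracting the two copies of \eqref{E2.9},
\begin{equation*}
\mathcal A\bigl(z_{u,v}-z_{\bar u,v}\bigr)+\frac{\partial f}{\partial y}(x,y_u)\bigl(z_{u,v}-z_{\bar u,v}\bigr)
= -\Bigl(\frac{\partial f}{\partial y}(x,y_u)-\frac{\partial f}{\partial y}(x,y_{\bar u})\Bigr)z_{\bar u,v},
\end{equation*}
and the right-hand side is controlled by $C_{f,M}|y_u-y_{\bar u}|\,|z_{\bar u,v}|$. Applying the $C(\bar\Omega)$ estimate \eqref{E2.2} and estimating $\|\,|y_u-y_{\bar u}|\,|z_{\bar u,v}|\,\|_{L^2}\le\sqrt{|\Omega|}\,\|y_u-y_{\bar u}\|_{C(\bar\Omega)}\|z_{\bar u,v}\|_{C(\bar\Omega)}$ gives the case $X=C(\bar\Omega)$ with $C_X=K_2\sqrt{|\Omega|}$; using \eqref{E2.3} with $s=2$ and $\|\,|y_u-y_{\bar u}|\,|z_{\bar u,v}|\,\|_{L^1}\le\|y_u-y_{\bar u}\|_{L^2}\|z_{\bar u,v}\|_{L^2}$ gives $X=L^2(\Omega)$ with $C_X=M_2$. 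Finally, item~(iii) follows by a perturbation/absorption argument: from \eqref{E2.11}–\eqref{E2.12} I write $\|y_u-y_{\bar u}\|_X=\|z_{\bar u,u-\bar u}+r_{\bar u,u}\|_X$ and bound $\|r_{\bar u,u}\|_X\le C\|y_u-y_{\bar u}\|_X^2\le C\varepsilon\|y_u-y_{\bar u}\|_X$ once $\|y_u-y_{\bar u}\|_{C(\bar\Omega)}\le\varepsilon$; choosing $\varepsilon$ so that $C\varepsilon\le\tfrac12$ and rearranging yields \eqref{E2.14}, and the analogous absorption in \eqref{E2.13} (using that $\|z_{\bar u,v}\|_X$ can be brought to the left) yields \eqref{E2.15}.

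The main obstacle I anticipate is the uniformity of the constants: the whole point is that $K_r,M_s,C_X,\varepsilon$ do not depend on $u,\bar u\in\Uad$. This hinges on the fact that all the coefficients $a=\frac{\partial f}{\partial y}(x,y_{\bar u})$ (or $y_u$) are nonnegative and uniformly bounded thanks to \eqref{E2.4}, \eqref{E2.5} and the uniform state bound \eqref{E2.8}, so that the a priori estimates \eqref{E2.2} and \eqref{E2.3}—whose constants were shown to be independent of $a$—apply with one fixed constant across all admissible pairs. The delicate point in (iii) is that the absorption threshold $\varepsilon$ is dictated by the product constants from (i) and (ii), and one must verify that the smallness condition $\|y_u-y_{\bar u}\|_{C(\bar\Omega)}\le\varepsilon$ is what controls the $L^2$ estimates as well; this is where the uniform embedding $C(\bar\Omega)\hookrightarrow L^2(\Omega)$ and the finiteness of $|\Omega|$ are used to pass smallness in the stronger norm down to the weaker one.
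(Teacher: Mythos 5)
Your proposal is correct and follows essentially the same route as the paper: the same decomposition of the remainder $y_u-y_{\bar u}-z_{\bar u,u-\bar u}$ and of $z_{u,v}-z_{\bar u,v}$ via two applications of the mean value theorem, the same invocation of the $a$-uniform estimates \eqref{E2.2} and \eqref{E2.3} to get \eqref{E2.11}--\eqref{E2.13}, and the same triangle-inequality/absorption argument (including the embedding $\|\cdot\|_{L^2(\Omega)}\le\sqrt{|\Omega|}\,\|\cdot\|_{C(\bar\Omega)}$ to transfer smallness) for \eqref{E2.14}--\eqref{E2.15}. No gaps; the uniformity concerns you flag are exactly the ones the paper's proof resolves, and in the same way.
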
	
\begin{proof}
Let us set $\phi = y_u - y_{\bar u} - z_{\bar u,u - \bar u} \in H_0^1(\Omega) \cap C(\bar\Omega)$. From the equations satisfied by the three functions and using the mean value theorem we get 
\[
\mathcal{A}\phi + \frac{\partial f}{\partial y}(x,y_{\bar u})\phi =  \Big[\frac{\partial f}{\partial y}(x,y_{\bar u}) - \frac{\partial f}{\partial y}(x,y_\theta)\Big](y_u - y_{\bar u}),
\]
where $y_\theta(x) = y_{\bar u}(x) + \theta(x)(y_u(x) - y_{\bar u}(x))$ with $\theta:\Omega \longrightarrow [0,1]$ measurable. Using again the mean value theorem we deduce
\[
\mathcal{A}\phi + \frac{\partial f}{\partial y}(x,y_{\bar u})\phi =  -\theta\frac{\partial^2f}{\partial y^2}(x,y_\vartheta)(y_u - y_{\bar u})^2
\]
with $y_\vartheta(x) = y_{\bar u}(x) + \vartheta(x)(y_\theta(x) - y_{\bar u}(x))$ and $\vartheta:\Omega \longrightarrow [0,1]$ measurable. By Lemma \ref{L2.1}-$(iii)$ and taking into account \eqref{E2.5} and \eqref{E2.8} we infer the existence of $C_r$ independent of $u, \bar u \in \Uad$ such that
\[
\|\phi\|_{C(\bar\Omega)} \le C_rC_{f,K_U}\|(y_{u}-y_{\bar u})^2\|_{L^r(\Omega)} = C_rC_{f,K_U}\|y_{u}-y_{\bar u}\|^2_{L^{2r}(\Omega)},
\]
which proves \eqref{E2.11} with $K_r = C_rC_{f,K_U}$. To prove \eqref{E2.12} we use Lemma \ref{L2.2} to obtain
\[
\|\phi\|_{L^{s}(\Omega)}\le C_{s'}C_{f,K_U}\|(y_u-y_{\bar u})^2\|_{L^1(\Omega)} =  C_{s'}C_{f,K_U}\|y_u-y_{\bar u}\|^2_{L^2(\Omega)}.
\]
Taking $M_s =  C_{s'}C_{f,K_U}$, \eqref{E2.12} follows.

Now we prove \eqref{E2.13} for $X = C(\bar\Omega)$. Setting $\psi = z_{u,v} - z_{\bar u,v}$ and subtracting the corresponding equations we infer with the mean value theorem
\[
\mathcal{A}\psi + \frac{\partial f}{\partial y}(x,y_{u})\psi = \Big[\frac{\partial f}{\partial y}(x,y_{\bar u}) - \frac{\partial f}{\partial y}(x,y_{u})\Big]z_{\bar u,v} = \frac{\partial^2f}{\partial y^2}(x,y_\theta)(y_{\bar u} - y_u)z_{\bar u,v}.
\]
Taking $r = 2$ in \eqref{E2.2} and using \eqref{E2.5} and \eqref{E2.8} it follows from the above equation
\[
\|\psi\|_{C(\bar\Omega)} \le C_2C_{f,K_U}\|(y_{\bar u} - y_u)z_{\bar u,v}\|_{L^2(\Omega)} \le K_2\sqrt{|\Omega|}\|y_u - y_{\bar u}\|_{C(\bar\Omega)}\|z_{\bar u,v}\|_{C(\bar\Omega)},
\]
which proves \eqref{E2.13} for $X = C(\bar\Omega)$. The proof for $X = L^2(\Omega)$ is analogous, we use the estimate \eqref{E2.3} for $s = 2$ instead of \eqref{E2.2}.

To prove \eqref{E2.14} for $X=C(\bar\Omega)$ we use \eqref{E2.11} with $r = 2$ to get
\begin{align*}
&\|y_{u} - y_{\bar u}\|_{C(\bar\Omega)} \le \|\phi\|_{C(\bar\Omega)} + \|z_{\bar u,u - \bar u}\|_{C(\bar\Omega)} \le K_2\|y_{u} - y_{\bar u}\|^2_{L^4(\Omega)} + \|z_{\bar u,u - \bar u}\|_{C(\bar\Omega)}\\
&\le K_2\sqrt{|\Omega|}\|y_{u} - y_{\bar u}\|^2_{C(\bar\Omega)} + \|z_{\bar u,u - \bar u}\|_{C(\bar\Omega)}.
\end{align*}
Choosing $\varepsilon_1 = [2K_2\sqrt{|\Omega|}]^{-1}$ the first inequality of \eqref{E2.14} follows if $\|y_{u} - y_{\bar u}\|_{C(\bar\Omega)} < \varepsilon_1$. To deal with the case $X = L^2(\Omega)$ we use \eqref{E2.12} with $s = 2$ and obtain
\begin{align*}
&\|y_{u} - y_{\bar u}\|_{L^2(\Omega)} \le \|\phi\|_{L^2(\Omega)} +  \|z_{\bar u,u - \bar u}\|_{L^2(\Omega)} \le M_2\|y_{u} - y_{\bar u}\|^2_{L^2(\Omega)} +  \|z_{\bar u,u - \bar u}\|_{L^2(\Omega)}\\
&\le M_2\sqrt{|\Omega|}\|y_{u} - y_{\bar u}\|_{C(\bar\Omega)}\|y_{u} - y_{\bar u}\|_{L^2(\Omega)} +  \|z_{\bar u,u - \bar u}\|_{L^2(\Omega)}.
\end{align*}
Hence, taking $\varepsilon_2 =  [2M_2\sqrt{|\Omega|}]^{-1}$ we obtain the first inequality of \eqref{E2.14} with $X = L^2(\Omega)$ if $\|y_{u} - y_{\bar u}\|_{C(\bar\Omega)} < \varepsilon_2$.

To prove the second inequality of \eqref{E2.14} for $X = C(\bar\Omega)$, we proceed as follows
\begin{align*}
&\|z_{\bar u,u - \bar u}\|_{C(\bar\Omega)} \le \|\phi\|_{C(\bar\Omega)} + \|y_{u} - y_{\bar u}\|_{C(\bar\Omega)}\le  K_2\sqrt{|\Omega|}\|y_{u} - y_{\bar u}\|^2_{C(\bar\Omega)} +  \|y_{u} - y_{\bar u}\|_{C(\bar\Omega)}\\
&\le \frac{3}{2} \|y_{u} - y_{\bar u}\|_{C(\bar\Omega)}\quad\text{ if } \|y_{u} - y_{\bar u}\|_{C(\bar\Omega)} < \varepsilon_1.
\end{align*}
Similarly the second inequality of \eqref{E2.14} follows if $X = L^2(\Omega)$ with $\varepsilon_2$ replacing $\varepsilon_1$.

Finally, we prove \eqref{E2.15}. Using \eqref{E2.13} we obtain
\begin{align*}
&\|z_{u,v}\|_X \le \|z_{u,v} - z_{\bar u,v}\|_X + \|z_{\bar u,v}\|_X \le C_X\|y_{u} - y_{\bar u}\|_X\|z_{\bar u,v}\|_X + \|z_{\bar u,v}\|_X,\\
&\|z_{\bar u,v}\|_X \le \|z_{u,v} - z_{\bar u,v}\|_X + \|z_{u,v}\|_X \le C_X\|y_{u} - y_{\bar u}\|_X\|z_{\bar u,v}\|_X + \|z_{u,v}\|_X.
\end{align*}
Therefore, selecting $\varepsilon = [2C_2]^{-1}$ for $X=C(\bar\Omega)$ and $\varepsilon = [2C_2\sqrt{|\Omega|}]^{-1}$ for $X=L^2(\Omega)$, \eqref{E2.15} follows if $\|y_{u} - y_{\bar u}\|_{C(\bar\Omega)} \le \varepsilon$.
\hfill\end{proof}

\section{The Control Problem}
\label{S3}

In this section, we make assumptions on the objective functional $J$ so that \Pb has at least one solution and the first and second order conditions for local optimality can be established. Since the problem is not convex, we will consider not only global minimizers, but also local minimizers. Throughout this paper, we will say that $\bar u$ is a local minimizer of \Pb if $\bar u \in \Uad$ and there exists a ball $B_\rho(\bar u) \subset L^2(\Omega)$ such that $J(\bar u) \le J(u)$ for every $u \in \Uad \cap B_\rho(\bar u)$. We will also say that $\bar u$ is a strong local minimizer of \Pb if $\bar u \in \Uad$ and there exists $\varepsilon > 0$ such that $J(\bar u) \le J(u)$ for every $u \in \Uad$ with $\|y_u - y_{\bar u}\|_{C(\bar\Omega)} < \varepsilon$. If the previous inequalities are strict whenever $u \neq \bar u$, then we say that $\bar u$ is a strict (strong) local minimizer. As far as we know, the notion of strong local minimizers in the framework of control of partial differential equations was introduced for the first time in \cite{BBS2014}; see also \cite{BS2016}.

We make the following assumptions on $L$.

\begin{assumption}
The function $L:\Omega \times \mathbb{R}^2 \longrightarrow \mathbb{R}$ is Carath\'eodory and of class $C^2$ with respect to the second variable. In addition, we assume that
\begin{align}
&L(x,y,u) = L_0(x,y) + g(x)u \ \text{ with } \ L_0(\cdot,0) \in L^1(\Omega)\ \text{ and }\ g \in L^\infty(\Omega),\label{E3.1}\\
&\left\{\begin{array}{l}\forall M > 0 \ \exists \psi_M \in L^2(\Omega) \text{ and } C_{L,M} > 0 \text{ such that }\vspace{1mm}\\
\displaystyle \Big|\frac{\partial L}{\partial y}(x,y,u)\Big| \le \psi_M(x)\ \text{ and } \ \ \Big|\frac{\partial^2L}{\partial y^2}(x,y,u)\Big| \le C_{L,M}\ \forall |y| \le M,\end{array}\right.\label{E3.2}\\
&\left\{\begin{array}{l}\forall M > 0 \text{ and } \forall \varepsilon > 0\ \exists \delta > 0 \text{ such that }\vspace{1mm}\\
\displaystyle\left|\frac{\partial^2L}{\partial y^2}(x,y_2,u) - \frac{\partial^2L}{\partial y^2}(x,y_1,u)\right| < \varepsilon \mbox{ if } |y_1|, |y_2| \leq M,\ |y_2 - y_1| \le \delta,\end{array}\right.
\label{E3.3}
\end{align}
for almost every $x \in \Omega$.
\label{A3}
\end{assumption}

Using Theorem \ref{T2.1}, the assumptions on $L$, and the boundedness of $\Uad$ in $L^\infty(\Omega)$, the existence of at least one solution of \Pb follows. Indeed, if we take a minimizing sequence $\{u_k\}_{k = 1}^\infty$, we can assume that $u_k \stackrel{*}{\rightharpoonup} \bar u$ in $L^\infty(\Omega)$. Then Theorem \ref{T2.1} implies that $y_{u_k} \to y_{\bar u}$ strongly in $H_0^1(\Omega) \cap C(\bar\Omega)$. Further, using \eqref{E2.8} and \eqref{E3.2} with $M = K_U$ we infer with the mean value theorem
\[
|L_0(x,y_{u_k}(x))| \le |L_0(x,0)| + \psi_{K_U}(x)K_U.
\]
Then we can apply Lebesgue's dominated convergence theorem to pass to the limit in the objective functional and to obtain $J(u_k) \to J(\bar u)$.

 In order to derive the first order optimality conditions satisfied by a local minimizer we address the issue of the differentiability of the objective functional $J$.

\begin{theorem}
Suppose that $r > \frac{n}{2}$. Then, the functional $J:L^r(\Omega) \longrightarrow \mathbb{R}$ is of class $C^2$. Moreover, given $u, v, v_1, v_2 \in L^r(\Omega)$ we have
\begin{align}
&J'(u)v = \int_\Omega(\varphi_u + g)v\dx,\label{E3.4}\\
&J''(u)(v_1,v_2) = \int_\Omega\Big[\frac{\partial^2L}{\partial y^2}(x,y_u,u) - \varphi_u\frac{\partial^2f}{\partial y^2}(x,y_u)\Big]z_{u,v_1}z_{u,v_2}\dx,\label{E3.5}
\end{align}
where $\varphi_u \in H_0^1(\Omega) \cap C(\bar\Omega)$ is the unique solution of the adjoint equation
\begin{equation}
\left\{\begin{array}{l}\displaystyle \mathcal{A}^*\varphi + \frac{\partial f}{\partial y}(x,y_u)\varphi = \frac{\partial L}{\partial y}(x,y_u,u) \text{ in } \Omega,\\ \varphi = 0\text{ on } \Gamma.\end{array}\right.
\label{E3.6}
\end{equation}
\label{T3.1}
\end{theorem}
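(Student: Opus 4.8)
The plan is to exploit the product structure of $L$ required in \eqref{E3.1}. Writing $J(u) = F(u) + \int_\Omega g u\dx$ with $F(u) := \int_\Omega L_0(x,y_u(x))\dx$, the term $\int_\Omega g u\dx$ is a bounded linear functional on $L^r(\Omega)$ (as $g \in L^\infty(\Omega)$), hence smooth, with derivative $v \mapsto \int_\Omega g v\dx$ and vanishing second derivative; so the whole matter reduces to $F$. I would regard $F$ as the composition $F = \Phi \circ G_r$, where $G_r : L^r(\Omega) \to H_0^1(\Omega)\cap C(\bar\Omega)$ is the control-to-state map and $\Phi : C(\bar\Omega) \to \mathbb{R}$ is the superposition functional $\Phi(y) := \int_\Omega L_0(x,y(x))\dx$. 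Theorem \ref{T2.2} already supplies that $G_r$ is of class $C^2$ with $G_r'(u)v = z_{u,v}$ and $G_r''(u)(v_1,v_2) = z_{u,v_1,v_2}$ given by \eqref{E2.9}--\eqref{E2.10}; hence it suffices to prove that $\Phi$ is of class $C^2$ and then to invoke the chain rule.

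The heart of the matter is the $C^2$ regularity of $\Phi$ on $C(\bar\Omega)$. Fix $y \in C(\bar\Omega)$, put $M = \|y\|_{C(\bar\Omega)} + 1$, and restrict to increments $h$ with small $C(\bar\Omega)$-norm, so that all relevant pointwise values stay in $[-M,M]$. Because the $u$-dependence of $L$ is linear, $\frac{\partial L}{\partial y} = \frac{\partial L_0}{\partial y}$ and $\frac{\partial^2 L}{\partial y^2} = \frac{\partial^2 L_0}{\partial y^2}$, and the natural candidates are $\Phi'(y)h = \int_\Omega \frac{\partial L}{\partial y}(x,y)h\dx$ and $\Phi''(y)(h_1,h_2) = \int_\Omega \frac{\partial^2 L}{\partial y^2}(x,y)h_1 h_2\dx$; by \eqref{E3.2} these are a continuous linear functional and a continuous bilinear form on $C(\bar\Omega)$ (the $L^2$-bound $\psi_M$ controls the former, the bound $C_{L,M}$ the latter). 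First-order differentiability follows from the Taylor remainder $\frac12\frac{\partial^2 L}{\partial y^2}(x,\xi)h^2 = O(\|h\|_{C(\bar\Omega)}^2)$. Since $L_0$ is merely of class $C^2$, one cannot expand to third order to handle $\Phi''$; instead, I would write the increment of the first derivative through the fundamental theorem of calculus as $\int_0^1 \big[\frac{\partial^2 L}{\partial y^2}(x,y+th) - \frac{\partial^2 L}{\partial y^2}(x,y)\big]h\,\mathrm{d}t$ and estimate it by means of the uniform modulus of continuity \eqref{E3.3}; this shows that $\Phi''(y)$ is indeed the derivative of $\Phi'$, and the same hypothesis \eqref{E3.3} yields at once the continuity of $y \mapsto \Phi''(y)$, so that $\Phi \in C^2$. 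I expect this superposition (Nemytskii) argument to be the principal obstacle, precisely because the absent third derivative must be compensated by the equicontinuity encoded in \eqref{E3.3}.

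Granting $\Phi \in C^2$, the chain rule gives $F'(u)v = \int_\Omega \frac{\partial L}{\partial y}(x,y_u)z_{u,v}\dx$ and
\[
F''(u)(v_1,v_2) = \int_\Omega \frac{\partial^2 L}{\partial y^2}(x,y_u)z_{u,v_1}z_{u,v_2}\dx + \int_\Omega \frac{\partial L}{\partial y}(x,y_u)z_{u,v_1,v_2}\dx.
\]
To recast these as \eqref{E3.4}--\eqref{E3.5} I would introduce the adjoint state $\varphi_u$ solving \eqref{E3.6}. Note that $\frac{\partial L}{\partial y}(x,y_u) \in L^2(\Omega)$ by \eqref{E3.2} and $2 > n/2$, so that, since $\mathcal{A}^*$ enjoys the properties of Lemma \ref{L2.1}, the state $\varphi_u$ is well defined in $H_0^1(\Omega)\cap C(\bar\Omega)$. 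Pairing \eqref{E3.6} with $z_{u,v}$, using the adjoint identity $\langle \mathcal{A}^*\varphi_u, z_{u,v}\rangle = \langle \mathcal{A}z_{u,v},\varphi_u\rangle$, the symmetry of the zeroth-order term, and equation \eqref{E2.9}, I obtain $\int_\Omega \frac{\partial L}{\partial y}(x,y_u)z_{u,v}\dx = \int_\Omega \varphi_u v\dx$, which together with the linear part yields \eqref{E3.4}. Applying the identical duality to $z_{u,v_1,v_2}$, but now replacing its right-hand side by that of \eqref{E2.10}, transforms the second integral in $F''$ into $-\int_\Omega \varphi_u\frac{\partial^2 f}{\partial y^2}(x,y_u)z_{u,v_1}z_{u,v_2}\dx$; collecting terms gives \eqref{E3.5}. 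Beyond the Nemytskii regularity, I expect only routine bookkeeping: checking that every integral and duality pairing is justified by the $C(\bar\Omega)\cap H_0^1(\Omega)$ regularity of $y_u$, $z_{u,v}$, $z_{u,v_1,v_2}$, and $\varphi_u$ furnished by Theorems \ref{T2.1}--\ref{T2.2} and Lemma \ref{L2.1}.
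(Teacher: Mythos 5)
Your proposal is correct and follows essentially the same route as the paper, whose proof is just the remark that the result is a straightforward consequence of Theorem \ref{T2.2}, Assumption \ref{A3}, and the chain rule, with the existence and regularity of $\varphi_u$ supplied by Lemma \ref{L2.1}-(iii) applied to $\mathcal{A}^*$. What you have written — the splitting off of the linear term $\int_\Omega gu\dx$, the $C^2$ property of the Nemytskii functional via \eqref{E3.2}--\eqref{E3.3}, and the duality argument converting the chain-rule formulas into \eqref{E3.4}--\eqref{E3.5} — is exactly the detailed version of the argument the paper leaves to the reader.
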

This is a straightforward consequence of Theorem \ref{T2.2}, Assumption \ref{A3}, and the chain rule. The only critical issue is the existence, uniqueness, and regularity of $\varphi_u$. But this is an immediate consequence of Lemma \ref{L2.1}-(iii) that, as already mentioned, applies to the operator $\mathcal{A}^*$ as well. From this theorem, the optimality conditions follow in the classical way.

\begin{theorem}
Let $\bar u$ be a (strong or not) local minimizer of \Pb, then there exist two unique elements $\bar y, \bar\varphi \in H_0^1(\Omega) \cap C(\bar\Omega)$ such that
\begin{align}
& \left\{\begin{array}{l} \mathcal{A}\bar y + f(x,\bar y) = \bar u \text{ in } \Omega,\\ \bar y = 0\text{ on } \Gamma,\end{array}\right.
\label{E3.7}\\
&\left\{\begin{array}{l}\displaystyle \mathcal{A}^*\bar\varphi + \frac{\partial f}{\partial y}(x,\bar y)\bar\varphi = \frac{\partial L}{\partial y}(x,\bar y,\bar u) \text{ in } \Omega,\\ \bar\varphi = 0\text{ on } \Gamma,\end{array}\right.
\label{E3.8}\\
&\int_\Omega(\bar\varphi + g)(u - \bar u)\, dx \ge 0 \quad \forall u \in \Uad. \label{E3.9}
\end{align}
\label{T3.2}
\end{theorem}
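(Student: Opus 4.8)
The plan is to recognize $\bar y$ and $\bar\varphi$ as the state and adjoint state associated with $\bar u$, and then to extract the variational inequality \eqref{E3.9} from local optimality by a classical directional-derivative argument exploiting the convexity of $\Uad$. First I would set $\bar y := y_{\bar u}$. By Theorem \ref{T2.1} this is the unique element of $H_0^1(\Omega)\cap C(\bar\Omega)$ solving \eqref{E1.1} with right-hand side $\bar u$, which is exactly \eqref{E3.7}. Next I would define $\bar\varphi := \varphi_{\bar u}$ as the solution of the adjoint equation \eqref{E3.6} evaluated at $u = \bar u$; since $y_{\bar u} = \bar y$, this is precisely \eqref{E3.8}. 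Its existence, uniqueness, and $H_0^1(\Omega)\cap C(\bar\Omega)$ regularity follow from Lemma \ref{L2.1}-$(iii)$ applied to $\mathcal A^*$, as already noted in the discussion of Theorem \ref{T3.1}: the right-hand side $\frac{\partial L}{\partial y}(x,\bar y,\bar u)$ lies in $L^2(\Omega)$ by \eqref{E3.2}, and $2 > n/2$ for $n = 2,3$.

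To obtain \eqref{E3.9}, I would fix an arbitrary $u \in \Uad$ and form the convex combination $u_t := \bar u + t(u - \bar u) = (1-t)\bar u + tu$, which belongs to $\Uad$ for every $t \in [0,1]$ by convexity of $\Uad$. The key is to verify that $u_t$ enters the neighborhood dictated by the notion of local minimizer for all sufficiently small $t > 0$, so that $J(u_t) \ge J(\bar u)$. If $\bar u$ is an ordinary local minimizer, this is immediate, since $\|u_t - \bar u\|_{L^2(\Omega)} = t\,\|u - \bar u\|_{L^2(\Omega)} \to 0$ forces $u_t \in B_\rho(\bar u)$ eventually. If $\bar u$ is a strong local minimizer, I would instead invoke the continuity of the control-to-state map: by Theorem \ref{T2.2} the map $G_r$ is of class $C^2$, hence continuous, so $y_{u_t} \to y_{\bar u}$ in $C(\bar\Omega)$ as $t \to 0^+$, and therefore $\|y_{u_t} - y_{\bar u}\|_{C(\bar\Omega)} < \varepsilon$ for $t$ small. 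In both cases $J(u_t) \ge J(\bar u)$ for all small $t > 0$.

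To conclude, since $J$ is of class $C^1$ on $L^r(\Omega)$ by Theorem \ref{T3.1} and the direction $u - \bar u \in L^\infty(\Omega) \subset L^r(\Omega)$ is admissible, the scalar function $t \mapsto J(u_t)$ is differentiable, and the inequality $J(u_t) \ge J(\bar u)$ yields
\[
0 \le \lim_{t \to 0^+}\frac{J(u_t) - J(\bar u)}{t} = J'(\bar u)(u - \bar u) = \int_\Omega(\bar\varphi + g)(u - \bar u)\dx,
\]
where the final equality is the formula \eqref{E3.4} at $u = \bar u$ with $\varphi_{\bar u} = \bar\varphi$. As $u \in \Uad$ was arbitrary, this is \eqref{E3.9}. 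Uniqueness of $\bar y$ and $\bar\varphi$ is inherited from the unique solvability of \eqref{E3.7} and \eqref{E3.8}, governed respectively by $\mathcal A + \frac{\partial f}{\partial y}(x,\bar y)$ and its adjoint, both isomorphisms by Lemma \ref{L2.1} and the nonnegativity of $\frac{\partial f}{\partial y}$ from \eqref{E2.4}. The only genuinely delicate point is the admissibility check for $u_t$ in the strong case, where one must pass through the $C(\bar\Omega)$-continuity of $G_r$ rather than a norm estimate on the controls; the remainder is the standard first-order variational argument.
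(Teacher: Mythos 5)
Your proof is correct and is precisely the ``classical way'' the paper invokes: the paper gives no detailed argument for Theorem \ref{T3.2}, stating only that the optimality conditions follow classically from Theorem \ref{T3.1}, and your write-up fills in exactly that standard argument (state and adjoint state via Theorem \ref{T2.1} and Lemma \ref{L2.1}-$(iii)$, then the directional-derivative inequality along convex combinations, with the correct extra care for the strong local minimizer case via continuity of the control-to-state map). No gaps.
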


The derivation of sufficient second order conditions for local optimality is more delicate. First we introduce the cone of critical directions on which we formulate the necessary second order conditions for optimality: if $\bar u \in \Uad$ is a local minimizer of \Pb we define
\[
 C_{\bar u} = \{v \in L^2(\Omega) : J'(\bar u)v = 0\ \text{ and $v$ satisfies the sign conditions \eqref{E3.10}}\},
\]
\begin{equation}
v(x)\left\{\begin{array}{cl}\ge 0&\text{if } \bar u(x) = u_a,\\\le 0&\text{if } \bar u(x) = u_b.\end{array}\right.
\label{E3.10}
\end{equation}
As usual, from \eqref{E3.9} we deduce that $(\bar\varphi+g)(x)v(x) \ge 0$ for almost all $x \in \Omega$ if $v \in L^2(\Omega)$ satisfies \eqref{E3.10}. Therefore, the condition $J'(\bar u)v = 0$ for $v$ satisfying \eqref{E3.10} is only possible if $v(x) = 0$ for almost every $x \in \Omega$ such that $(\bar\varphi+g)(x) \neq 0$. Therefore, $C_{\bar u}$ can be written
\[
 C_{\bar u} = \{v \in L^2(\Omega) : \text{satisfying \eqref{E3.10} and } v(x) = 0 \text{ if } |(\bar\varphi+g)(x)| > 0\}.
\]

It is well known that every local minimizer $\bar u$ satisfies the second order necessary optimality condition: $J''(\bar u)v^2 \ge 0$ for all $v \in C_{\bar u}$; see, for instance, \cite{Casas-Troltzsch2012}. However, based on $C_{\bar u}$ it is not possible to get sufficient second order conditions for local optimality. The reader is referred to \cite{Dunn98} for a counterexample. A procedure suggested by several authors consists in extending the cone of critical directions $C_{\bar u}$; see \cite{DHPY95,Dunn95,Maurer81,Maurer-Zowe79}. Two possible extensions of $C_{\bar u}$ seem natural after the above comments: for $\tau > 0$ we define the extended cones
\begin{align*}
&D^\tau_{\bar u} = \{v \in L^2(\Omega) : \text{satisfying \eqref{E3.10} and } v(x) = 0 \text{ if } |(\bar\varphi + g)(x)| > \tau\},\\
&G^\tau_{\bar u} = \{v \in L^2(\Omega) : \text{satisfying \eqref{E3.10} and }  J'(\bar u)v \le \tau\|z_v\|_{L^1(\Omega)}\}.
\end{align*}
On any of these cones we can formulate sufficient second order conditions for local optimality. Obviously, both are extensions of $C_{\bar u}$. In \cite{Casas-Mateos2020}, the authors introduced the cone $C_{\bar u}^\tau = D^\tau_{\bar u} \cap G^\tau_{\bar u}$, which is also an extension of $C_{\bar u}$. They proved that the first order optimality conditions \eqref{E3.7}--\eqref{E3.9} along with the condition
\begin{equation}
\exists \delta > 0 \text{ such that } J''(\bar u)v^2 \ge \delta\|z_v\|^2_{L^2(\Omega)}\quad \forall v \in C_{\bar u}^\tau
\label{E3.11}
\end{equation}
imply the existence of $\kappa > 0$ and $\varepsilon > 0$ such that
\begin{equation}
J(\bar u) + \frac{\kappa}{2}\|y_u - \bar y\|^2_{L^2(\Omega)} \le J(u) \ \ \forall u \in \Uad \text{ such that } \|y_u - \bar y\|_{C(\bar\Omega)} < \varepsilon.
\label{E3.12}
\end{equation}
Actually, the proof of \cite{Casas-Mateos2020} was carried out for a parabolic control problem with $g = 0$. However, the same proof works for the elliptic case and $g \neq 0$. Here, we formulate a new assumption leading to the same result \eqref{E3.12} as \eqref{E3.11} does.

\begin{assumption}
There exist numbers $\alpha > 0$ and $\gamma > 0$ such that
\begin{equation}
J'(\bar u)(u - \bar u) + J''(\bar u)(u - \bar u)^2 \ge \gamma\|z_{\bar u,u - \bar u}\|^2_{L^2(\Omega)}\ \forall u \in \Uad \text{ with } \|y_u - \bar y\|_{C(\bar\Omega)} < \alpha.
\label{E3.13}
\end{equation}
\label{A4}
\end{assumption}
It was proved in \cite{Casas-Mateos2021} that \eqref{E3.11} implies \eqref{E3.13}. Therefore, \eqref{E3.13} appears as a weaker assumption. However, the next theorem proves that it is sufficient to imply \eqref{E3.12}.

\begin{theorem}
Let $\bar u \in \Uad$ satisfy the optimality conditions \eqref{E3.7}--\eqref{E3.9} and Assumption \ref{A4}. Then,  there exist $\varepsilon > 0$ and $\kappa > 0$ such that \eqref{E3.12} holds.
\label{T3.3}
\end{theorem}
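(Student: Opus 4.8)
The plan is to derive a Taylor-type expansion of $J(u)-J(\bar u)$ around $\bar u$ in which the quadratic part is written through the state increment $y_u-\bar y$ and the fixed adjoint state $\bar\varphi$, and then to match this quadratic part with $J''(\bar u)(u-\bar u)^2$ so that Assumption \ref{A4} becomes applicable. First I would fix $u\in\Uad$ with $\|y_u-\bar y\|_{C(\bar\Omega)}<\varepsilon$ (the radius $\varepsilon$ is chosen at the very end, smaller than $\alpha$ and than the thresholds of Lemma \ref{L2.3}) and set $v=u-\bar u$. Using the splitting \eqref{E3.1}, a second-order Taylor expansion of $L_0$ and of $f$ in the $y$-variable, and the equations \eqref{E3.7} and \eqref{E3.8}, I would obtain the exact identity
\begin{align*}
J(u)-J(\bar u)={}&\int_\Omega(\bar\varphi+g)(u-\bar u)\dx\\
&+\frac12\int_\Omega\Big[\frac{\partial^2L}{\partial y^2}(x,y_\rho,\bar u)-\bar\varphi\frac{\partial^2f}{\partial y^2}(x,y_\sigma)\Big](y_u-\bar y)^2\dx,
\end{align*}
where $y_\rho,y_\sigma$ are intermediate states with $|y_\rho(x)-\bar y(x)|,|y_\sigma(x)-\bar y(x)|\le|y_u(x)-\bar y(x)|$ a.e. The first term equals $J'(\bar u)(u-\bar u)$ and is nonnegative by \eqref{E3.9}. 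The cancellation of the terms linear in $y_u-\bar y$ is exactly what the adjoint equation delivers, and it is essential here that $\partial^2 L/\partial y^2$ is independent of $u$ by \eqref{E3.1}, so that no intermediate \emph{control} enters the coefficient.

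Next I would show that the quadratic integral above equals $\tfrac12 J''(\bar u)(u-\bar u)^2$ up to an error bounded by $\eta(\varepsilon)\|y_u-\bar y\|^2_{L^2(\Omega)}$, with $\eta(\varepsilon)\to0$ as $\varepsilon\to0$. For this I replace the coefficient evaluated at $y_\rho,y_\sigma$ by the one evaluated at $\bar y$, bounding the error through the uniform continuity of the second derivatives granted by \eqref{E2.6} and \eqref{E3.3} with $M=K_U$ (note $|y_\rho(x)|,|y_\sigma(x)|\le K_U$, so the modulus of continuity applies uniformly in $x$); and I replace $(y_u-\bar y)^2$ by $z_{\bar u,u-\bar u}^2$, factoring the difference of squares and invoking \eqref{E2.12} together with \eqref{E2.14}. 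Each correction carries a factor that is $o(1)$ as $\|y_u-\bar y\|_{C(\bar\Omega)}\to0$, which is what produces the small coefficient $\eta(\varepsilon)$.

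Combining the two steps yields
\begin{equation*}
J(u)-J(\bar u)\ge J'(\bar u)(u-\bar u)+\tfrac12 J''(\bar u)(u-\bar u)^2-\eta(\varepsilon)\|y_u-\bar y\|^2_{L^2(\Omega)}.
\end{equation*}
Since $J'(\bar u)(u-\bar u)\ge0$, I would write $J'(\bar u)(u-\bar u)+\tfrac12 J''(\bar u)(u-\bar u)^2\ge\tfrac12\big[J'(\bar u)(u-\bar u)+J''(\bar u)(u-\bar u)^2\big]$ and then apply \eqref{E3.13} (legitimate because $\|y_u-\bar y\|_{C(\bar\Omega)}<\alpha$) to bound the bracket below by $\gamma\|z_{\bar u,u-\bar u}\|^2_{L^2(\Omega)}$; the first inequality in \eqref{E2.14} converts this into $\tfrac{\gamma}{8}\|y_u-\bar y\|^2_{L^2(\Omega)}$. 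Choosing $\varepsilon$ so small that $\eta(\varepsilon)\le\gamma/16$ then gives \eqref{E3.12} with $\kappa=\gamma/8$.

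The main obstacle is the remainder analysis of the middle step: one must guarantee that the passage from the state-based quadratic form to the genuine Hessian $J''(\bar u)$ only generates errors proportional to $\|y_u-\bar y\|^2_{L^2(\Omega)}$ with a coefficient that vanishes with $\varepsilon$, which is where the approximation estimates of Lemma \ref{L2.3} and the uniform continuity in Assumptions \ref{A2} and \ref{A3} are used in concert. The second, more conceptual, device that makes the argument close is the use of $J'(\bar u)(u-\bar u)\ge0$ to bridge the mismatch between the factor $\tfrac12$ appearing naturally in the Taylor expansion and the full Hessian $J''(\bar u)(u-\bar u)^2$ postulated in \eqref{E3.13}.
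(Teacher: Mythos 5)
Your proposal is correct, but its central technical step takes a genuinely different route from the paper's. The paper expands $J$ along the segment in \emph{control} space, $J(u)=J(\bar u)+J'(\bar u)(u-\bar u)+\tfrac12 J''(u_\theta)(u-\bar u)^2$ with an intermediate control $u_\theta=\bar u+\theta(u-\bar u)$, and then must prove $|[J''(u_\theta)-J''(\bar u)](u-\bar u)^2|<\rho\|z_{\bar u,u-\bar u}\|^2_{L^2(\Omega)}$; this is Lemma \ref{L3.3}, whose proof requires controlling the states and adjoint states associated with the intermediate controls (Lemmas \ref{L3.1} and \ref{L3.2}) and comparing $z_{u_\theta,v}$ with $z_{\bar u,v}$. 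You avoid intermediate controls altogether: by Taylor-expanding $L_0$ and $f$ in the \emph{state} variable and using the state and adjoint equations \eqref{E3.7}--\eqref{E3.8}, you obtain an exact identity in which only the fixed adjoint $\bar\varphi$ appears, and the intermediate states are pointwise convex combinations of $\bar y$ and $y_u$, hence automatically uniformly close to $\bar y$; the comparison with $J''(\bar u)(u-\bar u)^2$ then needs only \eqref{E2.12}, \eqref{E2.14} and the uniform continuity assumptions \eqref{E2.6}, \eqref{E3.3}. Your identity and remainder analysis check out (the replacement of the coefficients at $y_\rho,y_\sigma$ by those at $\bar y$, and of $(y_u-\bar y)^2$ by $z_{\bar u,u-\bar u}^2$ via the difference of squares, both produce errors of the form $o(1)\,\|y_u-\bar y\|^2_{L^2(\Omega)}$); note in passing that your expansion is close in spirit to the Hamiltonian-based computation the paper performs later in Lemma \ref{L4.1}, rather than to its proof of Theorem \ref{T3.3}. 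What each approach buys: yours is more self-contained and arguably simpler for Theorem \ref{T3.3} in isolation, since it bypasses Lemmas \ref{L3.1}--\ref{L3.3} entirely; the paper's detour is amortized because the estimates \eqref{E3.14} and \eqref{E3.16} are reused in Section \ref{S4}, and Lemma \ref{L3.3} gives Hessian continuity uniformly in all directions $v$, not just $v=u-\bar u$. The closing moves coincide: both proofs use $J'(\bar u)(u-\bar u)\ge 0$ from \eqref{E3.9} to absorb the factor $\tfrac12$, then apply \eqref{E3.13} and \eqref{E2.14}, arriving at the same constant $\kappa=\gamma/8$.
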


Before proving this theorem we establish some lemmas.

\begin{lemma}
Let $\bar u \in \Uad$ be fixed with associated state $\bar y$. Then, the following inequality holds for all $\theta \in [0,1]$ and $u \in \Uad$
\begin{align}
\|y_{\bar u + \theta(u - \bar u)} - \bar y\|_{C(\bar\Omega)} &\le  (C_2C_{f,K_U}\sqrt{|\Omega|}\|y_u - \bar y\|_{C(\bar\Omega)} + 1)\|y_u - \bar y\|_{C(\bar\Omega)},
\label{E3.14}
\end{align}
where $C_2$ is the constant of \eqref{E2.2} with $r = 2$ and $C_{f,K_U}$ is the one deduced from \eqref{E2.5} and \eqref{E2.8}.
\label{L3.1}
\end{lemma}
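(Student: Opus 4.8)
The plan is to compare the state $y_{u_\theta}$ at the intermediate control $u_\theta := \bar u + \theta(u-\bar u)$ not with a linearization but with the \emph{convex combination of the endpoint states}. Since $\Uad$ is convex, $u_\theta \in \Uad$, so all states in play are bounded by $K_U$ in $C(\bar\Omega)$ by \eqref{E2.8}. Set $\tilde y := \theta y_u + (1-\theta)\bar y$ and $\zeta := y_{u_\theta} - \tilde y = y_{u_\theta} - \bar y - \theta(y_u - \bar y)$. Forming the convex combination $\theta\times$(state equation for $y_u$) $+\,(1-\theta)\times$(state equation for $\bar y$) and noting that its right-hand side collapses to $\theta u + (1-\theta)\bar u = u_\theta$, I subtract the state equation for $y_{u_\theta}$ to obtain an equation for $\zeta$ whose load is purely nonlinear:
\[
\mathcal A\zeta + f(\cdot,y_{u_\theta}) - \theta f(\cdot,y_u) - (1-\theta)f(\cdot,\bar y) = 0.
\]

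Next I would split the nonlinear term as $\big[f(\cdot,y_{u_\theta}) - f(\cdot,\tilde y)\big] + \big[f(\cdot,\tilde y) - \theta f(\cdot,y_u) - (1-\theta)f(\cdot,\bar y)\big]$. The mean value theorem turns the first bracket into $\frac{\partial f}{\partial y}(\cdot,y_\#)\,\zeta$ with an intermediate state $y_\#$ lying between $y_{u_\theta}$ and $\tilde y$, hence $|y_\#|\le K_U$ and $\frac{\partial f}{\partial y}(\cdot,y_\#)\ge 0$ by \eqref{E2.4}. Moving the second bracket to the right-hand side shows that $\zeta$ solves a linear equation $\mathcal A\zeta + a\,\zeta = \rho$ with nonnegative bounded coefficient $a = \frac{\partial f}{\partial y}(\cdot,y_\#)$ and load $\rho = \theta f(\cdot,y_u) + (1-\theta)f(\cdot,\bar y) - f(\cdot,\tilde y)$, the ``convexity defect'' of $f$ along the segment $[\bar y, y_u]$.

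The key estimate is that this defect is quadratic in $y_u-\bar y$: a second-order Taylor expansion of $f(x,\cdot)$ around $\tilde y(x)$ gives $|\rho(x)|\le \tfrac12\theta(1-\theta)\,C_{f,K_U}\,(y_u(x)-\bar y(x))^2 \le C_{f,K_U}\,(y_u(x)-\bar y(x))^2$, using \eqref{E2.5} and that all arguments stay in $[-K_U,K_U]$. I would then invoke Lemma \ref{L2.1}(iii) with $r=2$ — this is the decisive point, since the constant $C_2$ there is \emph{independent of the zeroth-order coefficient} $a$, which is essential because $a$ depends on the unknown, $\theta$-dependent intermediate state. This yields
\[
\|\zeta\|_{C(\bar\Omega)} \le C_2\|\rho\|_{L^2(\Omega)} \le C_2 C_{f,K_U}\|y_u-\bar y\|_{L^4(\Omega)}^2 \le C_2 C_{f,K_U}\sqrt{|\Omega|}\,\|y_u-\bar y\|_{C(\bar\Omega)}^2 .
\]

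Finally, the triangle inequality $\|y_{u_\theta} - \bar y\|_{C(\bar\Omega)} \le \|\zeta\|_{C(\bar\Omega)} + \theta\|y_u-\bar y\|_{C(\bar\Omega)}$ together with $\theta\le 1$ yields exactly \eqref{E3.14}. I expect the only genuine obstacle to be conceptual rather than computational: recognizing that the correct comparison object is the convex combination $\tilde y$ of the \emph{true} nonlinear states, so that the first-order contributions cancel and only the quadratic convexity defect survives, and then exploiting the coefficient-independence of the constant in Lemma \ref{L2.1}(iii). The remaining ingredients — the two applications of the mean value theorem, the Taylor bound on $\rho$, and the $L^4$–$L^\infty$ interpolation on the finite-measure domain $\Omega$ — are routine.
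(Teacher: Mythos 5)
Your proof is correct, and its skeleton coincides with the paper's: you use the same comparison function $\zeta = y_{\bar u + \theta(u-\bar u)} - [\bar y + \theta(y_u - \bar y)]$ (the paper's $\phi$), the same resulting equation, the same key input --- Lemma \ref{L2.1}(iii) with $r=2$ and its coefficient-independent constant --- and the same concluding triangle inequality. Where you differ is in how the nonlinear remainder is organized. The paper applies the mean value theorem to $f(x,y_{\bar u+\theta(u-\bar u)})-f(x,\bar y)$ and to $f(x,y_u)-f(x,\bar y)$, substitutes $y_{\bar u+\theta(u-\bar u)}-\bar y = \phi + \theta(y_u-\bar y)$, and then applies the mean value theorem a second time to the difference of first derivatives, writing the load as $\theta\frac{\partial^2 f}{\partial y^2}(x,y_3)(y_u-\bar y)^2$. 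You instead split around the convex combination $\tilde y$: one mean value theorem extracts the nonnegative bounded coefficient $a=\frac{\partial f}{\partial y}(\cdot,y_{\#})$ from $f(\cdot,y_{\bar u+\theta(u-\bar u)})-f(\cdot,\tilde y)$, and a second-order Taylor expansion bounds the convexity defect $\rho$ by $\tfrac12\theta(1-\theta)C_{f,K_U}(y_u-\bar y)^2$. Your variant is in fact slightly tidier at the one delicate spot: your load $\rho$ involves only $\bar y$, $y_u$, $\tilde y$ and not $\zeta$ itself, so the quadratic bound is immediate, whereas in the paper's chain the intermediate points $y_1,y_2$ both depend on $y_{\bar u+\theta(u-\bar u)}$, and writing the resulting load directly as a multiple of $(y_u-\bar y)^2$ silently absorbs a term that is not literally of that form. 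Both routes land on the identical estimate $\|\zeta\|_{C(\bar\Omega)} \le C_2C_{f,K_U}\sqrt{|\Omega|}\,\|y_u-\bar y\|^2_{C(\bar\Omega)}$ and hence on \eqref{E3.14}.
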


\begin{proof}
The proof  of this lemma is based on the analogous result for parabolic control problems established in \cite{CMR2019}. We take $\theta \in [0,1]$ and $u \in \Uad$. We set $\phi = y_{\bar u + \theta(u - \bar u)} - [\bar y + \theta(y_u - \bar y)]$. Then, we have
\[
\mathcal{A}\phi + f(x,y_{\bar u + \theta(u - \bar u)}) -  [f(x,\bar y) + \theta(f(x,y_u) - f(x,\bar y))] = 0.
\]
Applying the mean value theorem, we obtain measurable functions $\theta_i:\Omega \longrightarrow [0,1]$, $i = 1, 2$, such that
\begin{align*}
&f(x,y_{\bar u + \theta(u - \bar u)}) -  f(x,\bar y) = \frac{\partial f}{\partial y}(x,y_1)(y_{\bar u + \theta(u - \bar u)} - \bar y)\,\text{and}\, y_1 = \bar y + \theta_1(y_{\bar u + \theta(u - \bar u)} - \bar y),\\
&f(x,y_u) - f(x,\bar y) = \frac{\partial f}{\partial y}(x,y_2)(y_u - \bar y) \text{ with } y_2 = \bar y + \theta_2(y_u - \bar y).
\end{align*}
Inserting these identities in the above partial differential equation we infer
\[
\mathcal{A}\phi + \frac{\partial f}{\partial y}(x,y_1)(y_{\bar u + \theta(u - \bar u)} - \bar y) - \theta\frac{\partial f}{\partial y}(x,y_2)(y_u - \bar y) = 0.
\]
Noting that $y_{\bar u + \theta(u - \bar u)} - \bar y = \phi + \theta(y_u - \bar y)$, the above equality and a new application of the mean value theorem lead to
\[
\mathcal{A}\phi + \frac{\partial f}{\partial y}(x,y_1)\phi = \theta\Big[\frac{\partial f}{\partial y}(x,y_2) - \frac{\partial f}{\partial y}(x,y_1)\Big](y_u - \bar y) = \theta \frac{\partial^2f}{\partial y^2}(x,y_3)(y_u - \bar y)^2,
\]
where $y_3 = y_1 + \theta_3(y_2 - y_1)$. Using \eqref{E2.2} with $r = 2$, \eqref{E2.5}, and \eqref{E2.8} we infer
\[
\|\phi\|_{C(\bar\Omega)} \le C_2C_{f,K_U}\|(y_u - \bar y)^2\|_{L^2(\Omega)} \le C_2C_{f,K_U}\sqrt{|\Omega|}\|y_u - \bar y\|^2_{C(\bar\Omega)}.
\]
This implies
\begin{align*}
&\|y_{\bar u + \theta(u - \bar u)} - \bar y\|_{C(\bar\Omega)}= \|\phi + \theta(y_u - \bar y)\|_{C(\bar\Omega)}\\
&\le (C_2C_{f,K_U}\sqrt{|\Omega|}\|y_u - \bar y\|_{C(\bar\Omega)} + 1)\|y_u - \bar y\|_{C(\bar\Omega)}.
\end{align*}
\end{proof}

\begin{lemma}
There exists a constant $M_U > 0$ such that
\begin{equation}
\|\varphi_u\|_{C(\bar\Omega)} \le M_U\quad \forall u \in \Uad.
\label{E3.15}
\end{equation}
Moreover, given $\bar u \in \Uad$ with associated state $\bar y$ and adjoint state $\bar\varphi$, we have
\begin{equation}
\|\varphi_{\bar u + \theta(u - \bar u)} - \bar\varphi\|_{C(\bar\Omega)} \le C\|y_u - \bar y\|_{C(\bar\Omega)} \quad \forall \theta \in [0,1] \text{ and } \ \forall u \in \Uad,
\label{E3.16}
\end{equation}
where $C$ depends only on $f$, $L$, $\Uad$, and $\Omega$.
\label{L3.2}
\end{lemma}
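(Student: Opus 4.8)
The plan is to treat the two estimates separately, in both cases reducing matters to the regularity bound \eqref{E2.2} of Lemma \ref{L2.1}-(iii), which, as noted in the text, applies verbatim to the adjoint operator $\mathcal{A}^*$.

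For \eqref{E3.15}, I would apply that regularity estimate directly to the adjoint equation \eqref{E3.6} defining $\varphi_u$. The zero-order coefficient $\frac{\partial f}{\partial y}(x,y_u)$ is nonnegative by \eqref{E2.4} and lies in $L^\infty(\Omega)$ by \eqref{E2.5}, so it plays the role of $a$ in Lemma \ref{L2.1}. For the right-hand side, \eqref{E2.8} gives $\|y_u\|_{C(\bar\Omega)} \le K_U$ for every $u \in \Uad$, and then \eqref{E3.2} with $M = K_U$ bounds $\big|\frac{\partial L}{\partial y}(x,y_u,u)\big| \le \psi_{K_U}(x)$ with $\psi_{K_U} \in L^2(\Omega)$ independent of $u$. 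Since $2 > \frac{n}{2}$ for $n = 2,3$, estimate \eqref{E2.2} with $r = 2$ yields $\|\varphi_u\|_{C(\bar\Omega)} \le C_2\|\psi_{K_U}\|_{L^2(\Omega)}$, and setting $M_U := C_2\|\psi_{K_U}\|_{L^2(\Omega)}$ proves \eqref{E3.15}, the constant being $u$-independent because $C_2$ is (again by Lemma \ref{L2.1}) independent of $a$.

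For \eqref{E3.16}, write $u_\theta = \bar u + \theta(u - \bar u)$, $y_\theta = y_{u_\theta}$, and $\psi = \varphi_{u_\theta} - \bar\varphi$. Subtracting the adjoint equation for $\bar\varphi$ from that for $\varphi_{u_\theta}$ and rearranging the zero-order terms, $\psi$ solves
\[
\mathcal{A}^*\psi + \frac{\partial f}{\partial y}(x,y_\theta)\psi = \Big[\frac{\partial L}{\partial y}(x,y_\theta,u_\theta) - \frac{\partial L}{\partial y}(x,\bar y,\bar u)\Big] - \Big[\frac{\partial f}{\partial y}(x,y_\theta) - \frac{\partial f}{\partial y}(x,\bar y)\Big]\bar\varphi.
\]
Here the structure $L = L_0(x,y) + g(x)u$ from \eqref{E3.1} is what makes the argument clean: $\frac{\partial L}{\partial y}$ does not depend on $u$, so the first bracket equals $\frac{\partial L_0}{\partial y}(x,y_\theta) - \frac{\partial L_0}{\partial y}(x,\bar y)$ and, by the mean value theorem together with the bound on $\frac{\partial^2 L}{\partial y^2}$ in \eqref{E3.2}, is dominated pointwise by $C_{L,K_U}|y_\theta - \bar y|$. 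For the second bracket I would use the mean value theorem and \eqref{E2.5} to bound $\big|\frac{\partial f}{\partial y}(x,y_\theta) - \frac{\partial f}{\partial y}(x,\bar y)\big| \le C_{f,K_U}|y_\theta - \bar y|$, and then invoke the already-established bound \eqref{E3.15}, $\|\bar\varphi\|_{C(\bar\Omega)} \le M_U$, to control the factor $\bar\varphi$. Consequently the whole right-hand side $h$ satisfies $\|h\|_{L^2(\Omega)} \le \sqrt{|\Omega|}\,(C_{L,K_U} + C_{f,K_U}M_U)\|y_\theta - \bar y\|_{C(\bar\Omega)}$.

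Applying \eqref{E2.2} with $r = 2$ to the equation for $\psi$ then gives $\|\psi\|_{C(\bar\Omega)} \le C\|y_\theta - \bar y\|_{C(\bar\Omega)}$ with $C$ depending only on $f$, $L$, $\Uad$, and $\Omega$. The last step is to replace $\|y_\theta - \bar y\|_{C(\bar\Omega)}$ by $\|y_u - \bar y\|_{C(\bar\Omega)}$: this is exactly Lemma \ref{L3.1}, whose prefactor $C_2C_{f,K_U}\sqrt{|\Omega|}\|y_u - \bar y\|_{C(\bar\Omega)} + 1$ is uniformly bounded over $u, \bar u \in \Uad$ because $\|y_u - \bar y\|_{C(\bar\Omega)} \le 2K_U$ by \eqref{E2.8}. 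Chaining the two inequalities yields \eqref{E3.16}. I do not expect a genuine obstacle; the only points requiring care are the bookkeeping that all constants stay independent of $u$, $\bar u$, and $\theta$ (which hinges on the uniform bound \eqref{E2.8} and on the $a$-independence of $C_2$ in Lemma \ref{L2.1}), and the observation that \eqref{E3.1} removes any control-difference contribution from the first bracket.
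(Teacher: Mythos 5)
Your proof is correct and takes essentially the same route as the paper: \eqref{E3.15} via the $a$-independent estimate \eqref{E2.2} with $r=2$ applied to \eqref{E3.6}, and \eqref{E3.16} by subtracting the two adjoint equations, applying the mean value theorem, invoking \eqref{E2.2} again, and finishing with Lemma \ref{L3.1}. The only immaterial difference is the rearrangement of the zero-order terms: you keep the coefficient $\frac{\partial f}{\partial y}(x,y_\theta)$ and bound the fixed factor $\bar\varphi$ by $M_U$, whereas the paper keeps $\frac{\partial f}{\partial y}(x,\bar y)$ and bounds $\varphi_\theta$ by $M_U$ (legitimate since $u_\theta \in \Uad$ by convexity).
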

\begin{proof}
For the proof of \eqref{E3.15} we use \eqref{E2.2} with $r = 2$, \eqref{E2.8}, and \eqref{E3.2} as follows
\[
\|\varphi_u\|_{C(\bar\Omega)} \le C_2\Big\|\frac{\partial L}{\partial y}(x,y_u,u)\Big\|_{L^2(\Omega)} \le M_U = C_2\|\psi_{K_U}\|_{L^2(\Omega)}.
\]

Let us prove \eqref{E3.16}. Given $u \in \Uad$ and $\theta \in [0,1]$ let us denote $u_\theta = \bar u + \theta(u - \bar u)$, $y_\theta = y_{u_\theta}$, and $\varphi_\theta = \varphi_{u_\theta}$. Subtracting the equations satisfied by $\varphi_\theta$ and $\bar\varphi$ we get with the mean value theorem
\begin{align*}
&\mathcal A^ *(\varphi_\theta - \bar\varphi) + \frac{\partial f}{\partial y}(x,\bar y)(\varphi_\theta - \bar\varphi) = \frac{\partial L}{\partial y}(x,y_\theta,u_\theta) - \frac{\partial L}{\partial y}(x,\bar y,\bar u)\\
&+ \Big[\frac{\partial f}{\partial y}(x,\bar y) - \frac{\partial f}{\partial y}(x,y_\theta)\Big]\varphi_\theta = \Big[\frac{\partial^2L}{\partial y^2}(x,y_\vartheta,u_\vartheta) - \varphi_\theta\frac{\partial^2f}{\partial y^2}(x,y_\vartheta)\Big](y_\theta - \bar y),
\end{align*}
where $y_\vartheta = \bar y + \vartheta(y_\theta - \bar y)$ for some measurable function $\vartheta:\Omega \longrightarrow [0,1]$. Now, we apply \eqref{E2.2} with $r = 2$, \eqref{E2.8}, \eqref{E3.15}, \eqref{E2.5}, and \eqref{E3.2} to get from the above equation
\[
\|\varphi_\theta - \bar\varphi\|_{C(\bar\Omega)} \le C_2(C_{L,K_U} + M_UC_{f,K_U})\sqrt{|\Omega|}\|y_\theta - \bar y\|_{C(\bar\Omega)}.
\]
Then, \eqref{E3.16} follows from Lemma \ref{L3.1}.
\end{proof}
\begin{lemma}
For every $\rho > 0$ there exists $\varepsilon > 0$ such that if $u \in \Uad$ and $\|y_u - \bar y\|_{C(\bar\Omega)} < \varepsilon$ then
\begin{equation}
|[J''(\bar u + \theta(u - \bar u)) - J''(\bar u)]v^2| < \rho\|z_{\bar u,v}\|^2_{L^2(\Omega)}\quad \forall v \in L^2(\Omega) \text{ and } \forall \theta \in [0,1].
\label{E3.17}
\end{equation}
\label{L3.3}
\end{lemma}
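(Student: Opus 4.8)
The plan is to work directly from the explicit formula \eqref{E3.5} for the second derivative. Writing $u_\theta = \bar u + \theta(u - \bar u)$, $y_\theta = y_{u_\theta}$, $\varphi_\theta = \varphi_{u_\theta}$, and $z_\theta = z_{u_\theta,v}$, I introduce the coefficient functions
\[
a_\theta(x) = \frac{\partial^2L}{\partial y^2}(x,y_\theta,u_\theta) - \varphi_\theta\frac{\partial^2f}{\partial y^2}(x,y_\theta), \quad \bar a(x) = \frac{\partial^2L}{\partial y^2}(x,\bar y,\bar u) - \bar\varphi\frac{\partial^2f}{\partial y^2}(x,\bar y),
\]
so that $[J''(u_\theta) - J''(\bar u)]v^2 = \int_\Omega a_\theta z_\theta^2\dx - \int_\Omega \bar a\, z_{\bar u,v}^2\dx$. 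A first observation is that these coefficients are uniformly bounded in $L^\infty(\Omega)$: by \eqref{E3.2} and \eqref{E2.5} with $M = K_U$ (using \eqref{E2.8}) and by the bound \eqref{E3.15} on $\varphi_u$, one has $\|a_\theta\|_{L^\infty(\Omega)},\|\bar a\|_{L^\infty(\Omega)}\le C_a := C_{L,K_U} + M_U C_{f,K_U}$ for all $u\in\Uad$ and $\theta\in[0,1]$. I then split
\[
[J''(u_\theta) - J''(\bar u)]v^2 = \int_\Omega a_\theta\,(z_\theta^2 - z_{\bar u,v}^2)\dx + \int_\Omega (a_\theta - \bar a)\,z_{\bar u,v}^2\dx
\]
and estimate the two terms separately, the whole point being that each is bounded by a quantity that tends to $0$ with $\varepsilon$ times the common factor $\|z_{\bar u,v}\|_{L^2(\Omega)}^2$.

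For the first term I factor $z_\theta^2 - z_{\bar u,v}^2 = (z_\theta - z_{\bar u,v})(z_\theta + z_{\bar u,v})$ and apply the Cauchy--Schwarz inequality, obtaining $C_a\|z_\theta - z_{\bar u,v}\|_{L^2(\Omega)}\|z_\theta + z_{\bar u,v}\|_{L^2(\Omega)}$. By Lemma \ref{L3.1}, $\|y_\theta - \bar y\|_{C(\bar\Omega)}$ is bounded by a fixed multiple of $\|y_u - \bar y\|_{C(\bar\Omega)} < \varepsilon$, uniformly in $\theta$; hence \eqref{E2.13} gives $\|z_\theta - z_{\bar u,v}\|_{L^2(\Omega)} \le M_2\sqrt{|\Omega|}\,\|y_\theta - \bar y\|_{C(\bar\Omega)}\,\|z_{\bar u,v}\|_{L^2(\Omega)}$, of order $\varepsilon\,\|z_{\bar u,v}\|_{L^2(\Omega)}$, while \eqref{E2.15} (valid once $\varepsilon$ is small, by Lemma \ref{L2.3}) yields $\|z_\theta\|_{L^2(\Omega)} \le \tfrac32\|z_{\bar u,v}\|_{L^2(\Omega)}$, so $\|z_\theta + z_{\bar u,v}\|_{L^2(\Omega)} \le \tfrac52\|z_{\bar u,v}\|_{L^2(\Omega)}$. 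Thus the first term is at most $C\varepsilon\,\|z_{\bar u,v}\|_{L^2(\Omega)}^2 < \tfrac{\rho}{2}\|z_{\bar u,v}\|_{L^2(\Omega)}^2$ for $\varepsilon$ small.

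The second term is bounded by $\|a_\theta - \bar a\|_{L^\infty(\Omega)}\|z_{\bar u,v}\|_{L^2(\Omega)}^2$, so it suffices to make the $L^\infty$-norm of the coefficient difference smaller than $\tfrac{\rho}{2}$. I split $a_\theta - \bar a$ into three pieces: $\frac{\partial^2L}{\partial y^2}(x,y_\theta,u_\theta) - \frac{\partial^2L}{\partial y^2}(x,\bar y,\bar u)$, the term $\varphi_\theta\big[\frac{\partial^2f}{\partial y^2}(x,y_\theta) - \frac{\partial^2f}{\partial y^2}(x,\bar y)\big]$, and $(\varphi_\theta - \bar\varphi)\frac{\partial^2f}{\partial y^2}(x,\bar y)$. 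Since $L$ is affine in $u$ by \eqref{E3.1}, $\frac{\partial^2L}{\partial y^2}$ does not depend on $u$, so the first piece is controlled by the uniform continuity \eqref{E3.3}; the second is bounded using $\|\varphi_\theta\|_{C(\bar\Omega)}\le M_U$ and the uniform continuity \eqref{E2.6}; the third is bounded using \eqref{E2.5} together with the adjoint estimate \eqref{E3.16}, which gives $\|\varphi_\theta - \bar\varphi\|_{C(\bar\Omega)} \le C\|y_u - \bar y\|_{C(\bar\Omega)}$. In each piece the argument $\|y_\theta - \bar y\|_{C(\bar\Omega)}$ is driven to $0$ uniformly in $\theta$ via Lemma \ref{L3.1}, so $\|a_\theta - \bar a\|_{L^\infty(\Omega)} < \tfrac{\rho}{2}$ for $\varepsilon$ small; adding the two contributions yields \eqref{E3.17}.

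The step I expect to be the main obstacle is precisely the $L^\infty$-smallness of $a_\theta - \bar a$, and within it the pieces $\frac{\partial^2f}{\partial y^2}(x,y_\theta) - \frac{\partial^2f}{\partial y^2}(x,\bar y)$ and $\frac{\partial^2L}{\partial y^2}(x,y_\theta,\cdot) - \frac{\partial^2L}{\partial y^2}(x,\bar y,\cdot)$: because the second derivatives are only assumed uniformly continuous in $y$ rather than Lipschitz, a naive difference bound is unavailable, and one must lean on the uniform-continuity assumptions \eqref{E2.6} and \eqref{E3.3}, combined with the uniform bound $\|y_\theta\|_{C(\bar\Omega)},\|\bar y\|_{C(\bar\Omega)}\le K_U$ and the uniform-in-$\theta$ smallness of $\|y_\theta - \bar y\|_{C(\bar\Omega)}$ supplied by Lemma \ref{L3.1}. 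Finally, since every estimate has been divided by the common factor $\|z_{\bar u,v}\|_{L^2(\Omega)}^2$, all constants are independent of $v$, which secures the required uniformity over $v \in L^2(\Omega)$.
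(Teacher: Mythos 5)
Your proposal is correct and is essentially the paper's own proof: the paper makes the symmetric splitting $(a_\theta-\bar a)\,z_{u_\theta,v}^2+\bar a\,(z_{u_\theta,v}^2-z_{\bar u,v}^2)$ (you attach the difference of squares to $a_\theta$ and the coefficient difference to $z_{\bar u,v}^2$ instead), and then estimates the pieces with exactly the ingredients you invoke --- \eqref{E3.3} and \eqref{E2.6} for the uniform continuity of the second derivatives, \eqref{E3.15}--\eqref{E3.16} for the adjoint states, \eqref{E2.13} and \eqref{E2.15} for the linearized states, and Lemma \ref{L3.1} for the uniform-in-$\theta$ smallness of $\|y_{u_\theta}-\bar y\|_{C(\bar\Omega)}$. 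Your observation that $\partial^2L/\partial y^2$ is independent of $u$ by \eqref{E3.1}, which is what lets \eqref{E3.3} be applied with the two different controls $u_\theta$ and $\bar u$, is used implicitly by the paper as well, so it is a clarification rather than a deviation.
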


\begin{proof}
First, let us denote $u_\theta$, $y_\theta$, and $\varphi_\theta$ as in the proof of Lemma \ref{L3.2}. From \eqref{E3.5} we get
\begin{align*}
&|[J''(\bar u + \theta(u - \bar u)) - J''(\bar u)]v^2| \le \int_\Omega\Big|\Big[\frac{\partial^2L}{\partial y^2}(x,y_\theta,u_\theta) - \frac{\partial^2L}{\partial y^2}(x,\bar y,\bar u)\Big]z_{u_\theta,v}^2\Big|\dx\\
&+ \int_\Omega\Big|(\varphi_\theta - \bar\varphi)\frac{\partial^2f}{\partial y^2}(x,y_\theta)z_{u_\theta,v}^2\Big|\dx+ \int_\Omega\Big|\bar\varphi\Big[\frac{\partial^2f}{\partial y^2}(x,y_\theta) - \frac{\partial^2f}{\partial y^2}(x,\bar y)\Big]z_{u_\theta,v}^2\Big|\dx\\
&+ \int_\Omega\Big|\Big[\frac{\partial^2L}{\partial y^2}(x,\bar y,\bar u) - \bar\varphi\frac{\partial^2f}{\partial y^2}(x,\bar y)\Big](z^2_{u_\theta,v} - z^2_{\bar u,v})\Big|\dx\\
& = I_1 + I_2 + I_3 + I_4.
\end{align*}
Let us estimate the terms $I_i$. For $I_1$ we deduce from \eqref{E3.3}, \eqref{E2.15}, and \eqref{E3.14} that for every $\rho > 0$ there exists $\varepsilon > 0$ such that $I_1 \le \rho\|z_{\bar u,v}\|^2_{L^2(\Omega)}$ if  $\|y_u - \bar y\|_{C(\bar\Omega)} < \varepsilon$.
The same estimate can be deduced for $I_2$ using \eqref{E2.5}, \eqref{E2.8}, \eqref{E2.15},  and \eqref{E3.16}. The estimate for $I_3$ follows from \eqref{E2.6}, \eqref{E2.8}, \eqref{E2.15}, \eqref{E3.14}, and \eqref{E3.15}. Finally, we estimate $I_4$ by using \eqref{E2.5}, \eqref{E2.8}, \eqref{E2.13}, \eqref{E2.15}, \eqref{E3.2}, \eqref{E3.14}, and \eqref{E3.15} to infer that
\begin{align*}
&I_4 \le (C_{L,K_U} + M_UC_{f,K_U})\|z_{u_\theta,v} + z_{\bar u,v}\|_{L^2(\Omega)}\|z_{u_\theta,v} - z_{\bar u,v}\|_{L^2(\Omega)}\\
& \le\frac{5}{2}  (C_{L,K_U} + M_UC_{f,K_U})C_{L^2(\Omega)}|\Omega|^{\frac{1}{2}}\|z_{\bar u,v}\|_{L^2(\Omega)}\|y_\theta - \bar y\|_{C(\bar\Omega)}\|z_{\bar u,v}\|_{L^2(\Omega)}\\
&\le \rho\|z_{\bar u,v}\|^2_{L^2(\Omega)} \quad \text{if } \|y_u - \bar y\|_{C(\bar\Omega)} < \varepsilon.
\end{align*}
Hence, \eqref{E3.17} is a straightforward consequence of the above estimates.
\end{proof}

\begin{proof}[Proof of Theorem \ref{T3.3}]
Let us take $u \in \Uad $ with $\|y_u - \bar y\|_{C(\bar\Omega)} < \alpha$. By performing a Taylor expansion and using that $J'(\bar u)(u - \bar u) \ge 0$ we obtain
\begin{align*}
&J(u) = J(\bar u) + J'(\bar u)(u - \bar u) + \frac{1}{2}J''(u_\theta)(u - \bar u)^2\\
& \ge J(\bar u) + \frac{1}{2}[J'(\bar u)(u - \bar u) + J''(\bar u)(u - \bar u)^2] + \frac{1}{2}[J''(u_\theta) - J''(\bar u)](u - \bar u)^2\\
&\ge J(\bar u) + \frac{\gamma}{2}\|z_{\bar u,u - \bar u}\|^2_{L^2(\Omega)} - \frac{1}{2}|[J''(u_\theta) - J''(\bar u)](u - \bar u)^2|.
\end{align*}
Lemma \ref{L3.3} implies the existence of $\varepsilon \in (0,\alpha]$ such that $|[J''(u_\theta) - J''(\bar u)](u - \bar u)^2| < \frac{\gamma}{2}\|z_{\bar u,u - \bar u}\|^2_{L^2(\Omega)}$ for every $u \in \Uad$ with $\|y_u - \bar y\|_{C(\bar\Omega)} < \varepsilon$. Inserting this estimate in the above expression and taking $\varepsilon$ still smaller if necessary, we can apply \eqref{E2.14} to deduce
\[
J(u) \ge J(\bar u) + \frac{\gamma}{4}\|z_{\bar u,u - \bar u}\|^2_{L^2(\Omega)} \ge J(\bar u) + \frac{\gamma}{16}\|y_u - \bar y\|^2_{L^2(\Omega)}.
\]
This inequality yields \eqref{E3.12} with $\kappa = \frac{\gamma}{8}$.
\end{proof}

\section{Stability of the states}
\label{S4}

In this section, we consider the following perturbations of the control problem \Pb
\[
   \Pbe \  \min_{u \in \Uad} J_\varepsilon(u) := \int_\Omega[L(x,y^\varepsilon_u(x),u(x)) + \eta_\varepsilon(x)y^\varepsilon_u(x)]\dx,
\]
where $y_u^\varepsilon$ is the solution of the equation
\begin{align}
\left\{\begin{array}{l}
-\dive\big(A(x)\nabla y\big)+b(x) \cdot \nabla y+f(x,y) = u + \xi_\varepsilon\  \text{ in }\ \Omega,
\\ y=0 \ \text{ on }\ \Gamma.
\end{array} \right.
\label{E4.1}
\end{align}
Here we assume that $\{\xi_{\varepsilon}\}_{\varepsilon > 0}$ and $\{\eta_{\varepsilon}\}_{\varepsilon > 0}$ are bounded families in $L^2(\Omega)$  satisfying that $(\xi_\varepsilon,\eta_\varepsilon)\to (0,0)$ in $L^2(\Omega)^2$ as $\varepsilon \to 0$. As a consequence of Theorem \ref{T2.1} we get the existence and uniqueness of a solution $y_u^\varepsilon \in H_0^1(\Omega) \cap C(\bar\Omega)$ of \eqref{E4.1}. Moreover, using \eqref{E2.7} with $r = 2$ and the boundedness of $\{\xi_\varepsilon\}_{\varepsilon > 0}$ in $L^2(\Omega)$ we infer that the set $\{y_u^\varepsilon : u \in \Uad \text{ and } \varepsilon > 0\}$ is bounded in $H_0^1(\Omega) \cap C(\bar\Omega)$. Therefore, increasing the value of $K_U$, if necessary, we can assume that \eqref{E2.8} and the inequality
\begin{equation}
\|y_u^\varepsilon\|_{H_0^1(\Omega)} + \|y_u^\varepsilon\|_{C(\bar\Omega)} \le K_U\quad \forall u \in \Uad\ \text{ and }\ \forall \varepsilon > 0
\label{E4.2}
\end{equation}
hold. We will prove that the solutions of problems \Pbe converge to the solutions of \Pb in some sense to be made precise below. Conversely, we will also prove that any strict strong local minimizer of \Pb can be approximated by strong local minimizers of problems \Pbe. Finally, the Lipschitz stability of the optimal states with respect to the perturbations is established. We start analyzing the difference between the solutions of \eqref{E1.1} and \eqref{E4.1}.

\begin{theorem}
The following inequalities hold for every $\varepsilon > 0$
\begin{align}
&\|y^\varepsilon_u - y_u\|_{H_0^1(\Omega)} + \|y^\varepsilon_u - y_u\|_{C(\bar\Omega)} \le C_2\|\xi_\varepsilon\|_{L^2(\Omega)} \ \forall u \in L^2(\Omega),\label{E4.3}\\
&\|z^\varepsilon_{u,v} - z_{u,v}\|_{L^2(\Omega)} \le C_2^2C_{f,K_U}\|\xi_\varepsilon\|_{L^2(\Omega)}\|z_{u,v}\|_{L^2(\Omega)}\ \forall (u,v) \in \Uad \times L^2(\Omega),
\label{E4.4}
\end{align}
where $C_2$ is the constant given in \eqref{E2.2} for $r = 2$, $C_{f,K_U}$ is the constant $C_{f,M}$ of \eqref{E2.5} with $M = K_U$ given in \eqref{E2.8} or \eqref{E4.2}, and $z^\varepsilon_{u,v}$ denotes the solution of \eqref{E2.9} with $y^\varepsilon_u$ replacing $y_u$.
\label{T4.1}
\end{theorem}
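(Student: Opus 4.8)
The plan is to prove both inequalities by the same elementary strategy used throughout the excerpt: write the difference of two functions as the solution of an auxiliary equation governed by $\mathcal{A}$ (or its derivative operator $\mathcal{A} + \frac{\partial f}{\partial y}(\cdot,y_u)\,\mathrm{Id}$), linearize the nonlinearity with the mean value theorem, and then apply the $a$-uniform regularity estimate \eqref{E2.2} from Lemma \ref{L2.1}-(iii). The key observation is that the nonnegativity of $\frac{\partial f}{\partial y}$ guaranteed by \eqref{E2.4} means the zeroth-order coefficient is an admissible nonnegative $a \in L^\infty(\Omega)$, so the constant $C_2$ does not depend on $u$ or $\varepsilon$.

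For \eqref{E4.3}, I would set $w = y_u^\varepsilon - y_u$ and subtract the equations \eqref{E1.1} and \eqref{E4.1}. After the mean value theorem this gives
\begin{align*}
\mathcal{A}w + \frac{\partial f}{\partial y}(x,y_\theta)\,w = \xi_\varepsilon,
\end{align*}
where $y_\theta$ lies between $y_u$ and $y_u^\varepsilon$. Since $a := \frac{\partial f}{\partial y}(\cdot,y_\theta) \ge 0$ and is bounded (by \eqref{E2.5} with $M = K_U$, using \eqref{E2.8} and \eqref{E4.2}), Lemma \ref{L2.1}-(iii) with $r = 2$ applies and yields $\|w\|_{H_0^1(\Omega)} + \|w\|_{C(\bar\Omega)} \le C_2\|\xi_\varepsilon\|_{L^2(\Omega)}$ with $C_2$ independent of $a$, hence of $u$ and $\varepsilon$. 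This is exactly \eqref{E4.3}.

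For \eqref{E4.4} I would set $\psi = z_{u,v}^\varepsilon - z_{u,v}$ and subtract equation \eqref{E2.9} from its $\varepsilon$-perturbed counterpart. The two linearized equations differ only in the coefficient $\frac{\partial f}{\partial y}$, evaluated at $y_u^\varepsilon$ versus $y_u$, so after moving terms and applying the mean value theorem I expect
\begin{align*}
\mathcal{A}\psi + \frac{\partial f}{\partial y}(x,y_u^\varepsilon)\,\psi = \Big[\frac{\partial f}{\partial y}(x,y_u) - \frac{\partial f}{\partial y}(x,y_u^\varepsilon)\Big]z_{u,v} = -\frac{\partial^2 f}{\partial y^2}(x,y_\theta)(y_u^\varepsilon - y_u)z_{u,v}.
\end{align*}
Again the coefficient on the left is a nonnegative bounded $a$, so Lemma \ref{L2.1}-(iii) with $r = 2$ controls $\|\psi\|$ by the $L^2$-norm of the right-hand side; bounding $\frac{\partial^2 f}{\partial y^2}$ by $C_{f,K_U}$ via \eqref{E2.5} and then substituting the $C(\bar\Omega)$-estimate \eqref{E4.3} for $\|y_u^\varepsilon - y_u\|$ produces the factor $C_2^2 C_{f,K_U}\|\xi_\varepsilon\|_{L^2(\Omega)}\|z_{u,v}\|_{L^2(\Omega)}$ claimed in \eqref{E4.4}. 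The only delicate point is tracking which norm multiplies the product $(y_u^\varepsilon - y_u)z_{u,v}$: I would estimate $\|(y_u^\varepsilon - y_u)z_{u,v}\|_{L^2(\Omega)} \le \|y_u^\varepsilon - y_u\|_{C(\bar\Omega)}\|z_{u,v}\|_{L^2(\Omega)}$ so that the $C(\bar\Omega)$-bound \eqref{E4.3} supplies one factor of $C_2$ and Lemma \ref{L2.1}-(iii) the other, giving the stated $C_2^2$. No step here is a genuine obstacle; the main care is bookkeeping of constants to match \eqref{E4.4} exactly.
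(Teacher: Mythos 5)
Your proof of \eqref{E4.3} is exactly the paper's: subtract \eqref{E4.1} from \eqref{E1.1}, linearize the nonlinearity with the mean value theorem, and apply the $a$-uniform estimate \eqref{E2.2} with $r=2$. For \eqref{E4.4} you derive the same auxiliary equation for $\psi = z^\varepsilon_{u,v}-z_{u,v}$, but you then estimate it with Lemma \ref{L2.1}-(iii) (the $L^2$-to-$C(\bar\Omega)$ estimate \eqref{E2.2}) together with the splitting $\|(y^\varepsilon_u-y_u)z_{u,v}\|_{L^2(\Omega)} \le \|y^\varepsilon_u-y_u\|_{C(\bar\Omega)}\|z_{u,v}\|_{L^2(\Omega)}$, whereas the paper uses Lemma \ref{L2.2} (the $L^1$-to-$L^2$ estimate \eqref{E2.3} with $s=2$) together with the Cauchy--Schwarz splitting $\|(y^\varepsilon_u-y_u)z_{u,v}\|_{L^1(\Omega)} \le \|y^\varepsilon_u-y_u\|_{L^2(\Omega)}\|z_{u,v}\|_{L^2(\Omega)}$; this mirrors the two alternatives already used in the proof of \eqref{E2.13} for $X=C(\bar\Omega)$ versus $X=L^2(\Omega)$. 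Both routes are sound. The paper's lands directly in the norm in which \eqref{E4.4} is stated; yours proves the stronger $C(\bar\Omega)$ (and $H_0^1$) bound on $\psi$, but to descend to the $L^2$ norm you must pay an embedding factor $|\Omega|^{1/2}$ (or a Poincar\'e constant), so your final constant is $|\Omega|^{1/2}C_2^2C_{f,K_U}$ rather than the stated $C_2^2C_{f,K_U}$ --- immaterial for every application of the theorem, and to be fair the paper's own last step, $\|y^\varepsilon_u-y_u\|_{L^2(\Omega)}\le C_2\|\xi_\varepsilon\|_{L^2(\Omega)}$, silently absorbs a similar factor, since \eqref{E4.3} controls the $H_0^1$ and $C(\bar\Omega)$ norms rather than the $L^2$ norm itself. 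One small imprecision to fix: \eqref{E4.3} is asserted for all $u\in L^2(\Omega)$, yet you justify the boundedness of the coefficient $\frac{\partial f}{\partial y}(\cdot,y_\theta)$ via $K_U$ from \eqref{E2.8} and \eqref{E4.2}, which hold only for $u\in\Uad$; for general $u\in L^2(\Omega)$ the sup-norm bound on $y_\theta$ comes from \eqref{E2.7} and depends on $\|u\|_{L^2(\Omega)}$. This is harmless precisely because Lemma \ref{L2.1}-(iii) gives a constant $C_2$ independent of the nonnegative bounded coefficient $a$, which is the only uniformity actually needed.
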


\begin{proof}
Subtracting the equations \eqref{E4.1} and \eqref{E1.1} and using the mean value theorem we obtain
\[
\mathcal{A}(y^\varepsilon_u - y_u) + \frac{\partial f}{\partial y}(x,y_\theta)(y^\varepsilon_u - y_u) = \xi_\varepsilon.
\]
Then, \eqref{E2.2} implies \eqref{E4.3}. To prove \eqref{E4.4} we subtract the equations satisfied by $z^\varepsilon_{u,v}$ and $z_{u,v}$ to obtain
\[
\mathcal{A}(z^\varepsilon_{u,v} - z_{u,v}) + \frac{\partial f}{\partial y}(x,y^\varepsilon_u)(z^\varepsilon_{u,v} - z_{u,v}) = \Big[\frac{\partial f}{\partial y}(x,y_u) - \frac{\partial f}{\partial y}(x,y^\varepsilon_u)\Big]z_{u,v}.
\]
Now, using \eqref{E2.3} with $s = 2$, \eqref{E2.5}, \eqref{E2.8}, and \eqref{E4.3} we obtain from the previous equation with the mean value theorem
\begin{align*}
&\|z^\varepsilon_{u,v} - z_{u,v}\|_{L^2(\Omega)} \le C_2\Big\|\Big[\frac{\partial f}{\partial y}(x,y_u) - \frac{\partial f}{\partial y}(x,y^\varepsilon_u)\Big]z_{u,v}\Big\|_{L^1(\Omega)}\\
&\le C_2C_{f,K_U}\|(y^\varepsilon_u - y_u)z_{u,v}\|_{L^1(\Omega)}\\
&\le C_2C_{f,K_U}\|y^\varepsilon_u - y_u\|_{L^2(\Omega)}\|z_{u,v}\|_{L^2(\Omega)} \le C_2^2C_{f,K_U}\|\xi_\varepsilon\|_{L^2(\Omega)}\|z_{u,v}\|_{L^2(\Omega)}.
\end{align*}
\hfill\end{proof}

Now we analyze the convergence of problems \Pbe to \Pb.

\begin{theorem}
Let $\{u_\varepsilon\}_{\varepsilon > 0}$ be a family of solutions of problems \Pbe. Any control $\bar u$ that is a weak$^*$ limit in $L^\infty(\Omega)$ of a sequence $\{u_{\varepsilon_k}\}_{k = 1}^\infty$ with $\varepsilon_k \to 0$ as $k \to \infty$ is a solution of \Pb. Moreover, the strong convergence $y_{u_{\varepsilon_k}}^{\varepsilon_k} \to y_{\bar u}$ in $H_0^1(\Omega) \cap C(\bar\Omega)$ holds. 
\label{T4.2}
\end{theorem}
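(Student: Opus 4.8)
The plan is to run the standard convergence-of-minimizers argument, exploiting the fact that the control enters the cost only through the linear term $\int_\Omega g\,u\dx$, so that the weak$^*$ convergence of the controls already yields convergence (not merely lower semicontinuity) of that term. The argument splits into two parts: first the strong convergence of the states, then the optimality of $\bar u$, the former being the engine for the latter.

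First I would establish the state convergence, since it is needed throughout. Writing $y_{u_{\varepsilon_k}}^{\varepsilon_k} - y_{\bar u} = \big(y_{u_{\varepsilon_k}}^{\varepsilon_k} - y_{u_{\varepsilon_k}}\big) + \big(y_{u_{\varepsilon_k}} - y_{\bar u}\big)$, the first summand is controlled by \eqref{E4.3}, which gives $\|y_{u_{\varepsilon_k}}^{\varepsilon_k} - y_{u_{\varepsilon_k}}\|_{H_0^1(\Omega)} + \|y_{u_{\varepsilon_k}}^{\varepsilon_k} - y_{u_{\varepsilon_k}}\|_{C(\bar\Omega)} \le C_2\|\xi_{\varepsilon_k}\|_{L^2(\Omega)} \to 0$ since $\xi_{\varepsilon_k} \to 0$. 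For the second summand, the weak$^*$ convergence $u_{\varepsilon_k} \stackrel{*}{\rightharpoonup} \bar u$ in $L^\infty(\Omega)$ implies weak convergence in $L^r(\Omega)$ for every $r$ (because $\Omega$ is bounded, $L^{r'}(\Omega) \subset L^1(\Omega)$), so Theorem \ref{T2.1} yields $y_{u_{\varepsilon_k}} \to y_{\bar u}$ strongly in $H_0^1(\Omega) \cap C(\bar\Omega)$. Combining the two gives the claimed convergence $y_{u_{\varepsilon_k}}^{\varepsilon_k} \to y_{\bar u}$.

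Next I would prove optimality. Fix an arbitrary $u \in \Uad$. Since $u_{\varepsilon_k}$ minimizes $J_{\varepsilon_k}$ over $\Uad$, we have $J_{\varepsilon_k}(u_{\varepsilon_k}) \le J_{\varepsilon_k}(u)$, and it suffices to pass to the limit on each side. On the right, $u$ is fixed and $y_u^{\varepsilon_k} \to y_u$ in $C(\bar\Omega)$ by \eqref{E4.3}; using \eqref{E3.1}--\eqref{E3.2}, the uniform bound \eqref{E4.2}, dominated convergence for the $L_0$-term, and $\|\eta_{\varepsilon_k}\|_{L^2(\Omega)}\to 0$ together with the $L^2(\Omega)$-boundedness of $y_u^{\varepsilon_k}$ for the perturbation term, I obtain $J_{\varepsilon_k}(u) \to J(u)$. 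On the left, I split $J_{\varepsilon_k}(u_{\varepsilon_k}) = \int_\Omega L_0(x,y_{u_{\varepsilon_k}}^{\varepsilon_k})\dx + \int_\Omega g\,u_{\varepsilon_k}\dx + \int_\Omega \eta_{\varepsilon_k}\,y_{u_{\varepsilon_k}}^{\varepsilon_k}\dx$. The first integral tends to $\int_\Omega L_0(x,y_{\bar u})\dx$ by the strong state convergence from the first step and dominated convergence; the third tends to $0$ as before; and the second tends to $\int_\Omega g\,\bar u\dx$ precisely because $g \in L^\infty(\Omega) \subset L^1(\Omega)$ and $u_{\varepsilon_k} \stackrel{*}{\rightharpoonup} \bar u$. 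Hence $J_{\varepsilon_k}(u_{\varepsilon_k}) \to J(\bar u)$, and passing to the limit in the inequality gives $J(\bar u) \le J(u)$. Since $u \in \Uad$ is arbitrary, $\bar u$ solves \Pb.

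The only point requiring care is the left-hand side limit. Unlike the usual setting with a Tikhonov term, the cost has no coercive dependence on $u$, so one cannot invoke weak lower semicontinuity of a convex control term. What rescues the argument is precisely the linear structure: $u \mapsto \int_\Omega g\,u\dx$ is weak$^*$ continuous, giving exact convergence of that term, while all genuinely nonlinear dependence is routed through the state, which converges strongly. I expect this interplay --- strong convergence of states handling the nonlinear-in-$y$ part and weak$^*$ convergence handling the linear-in-$u$ part --- to be the crux, with the perturbations $\xi_\varepsilon$ and $\eta_\varepsilon$ contributing only vanishing corrections.
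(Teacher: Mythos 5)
Your proof is correct and follows essentially the same route as the paper: the same decomposition $y_{u_{\varepsilon_k}}^{\varepsilon_k} - y_{\bar u} = (y_{u_{\varepsilon_k}}^{\varepsilon_k} - y_{u_{\varepsilon_k}}) + (y_{u_{\varepsilon_k}} - y_{\bar u})$ handled by \eqref{E4.3} and Theorem \ref{T2.1}, followed by passing to the limit on both sides of $J_{\varepsilon_k}(u_{\varepsilon_k}) \le J_{\varepsilon_k}(u)$. You merely spell out the details the paper leaves implicit (dominated convergence for the $L_0$-term, weak$^*$ continuity of the linear $g\,u$-term, and the vanishing $\eta_\varepsilon$-term), which is exactly what the paper's citations of \eqref{E3.2}, \eqref{E4.3}, and $\eta_\varepsilon \to 0$ are standing in for.
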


\begin{proof}
The existence of the sequences $\{u_{\varepsilon_k}\}_{k = 1}^\infty$ converging to $\bar u$ weakly$^*$ in $L^\infty(\Omega)$ is a consequence of the boundedness of $\Uad$ in $L^\infty(\Omega)$. From Theorem \ref{T2.1} and \eqref{E4.3} we infer
\begin{align*}
&\|y_{u_{\varepsilon_k}}^{\varepsilon_k} - y_{\bar u}\|_{H_0^1(\Omega)} + \|y_{u_{\varepsilon_k}}^{\varepsilon_k} - y_{\bar u}\|_{C(\bar\Omega)}\\
& \le \|y_{u_{\varepsilon_k}}^{\varepsilon_k} - y_{u_{\varepsilon_k}}\|_{H_0^1(\Omega)} + \|y_{u_{\varepsilon_k}}^{\varepsilon_k} - y_{u_{\varepsilon_k}}\|_{C(\bar\Omega)} +  \|y_{u_{\varepsilon_k}} - y_{\bar u}\|_{H_0^1(\Omega)} + \|y_{u_{\varepsilon_k}} - y_{\bar u}\|_{C(\bar\Omega)}\\
&\le C_2\|\xi_\varepsilon\|_{L^2(\Omega)} + \|y_{u_{\varepsilon_k}} - y_{\bar u}\|_{H_0^1(\Omega)} + \|y_{u_{\varepsilon_k}} - y_{\bar u}\|_{C(\bar\Omega)} \to 0\ \text{ as } k \to \infty.
\end{align*}
Using this fact, the convergence $\eta_\varepsilon \to 0$ as $\varepsilon \to 0$, \eqref{E3.2}, the optimality of $u_{\varepsilon_k}$ for \Pbek, and again \eqref{E4.3}, we get
\[
J(\bar u) = \lim_{k \to \infty}J_{\varepsilon_k}(u_{\varepsilon_k}) \le \lim_{k \to \infty}J_{\varepsilon_k}(u) = J(u)\quad \forall u \in \Uad,
\]
which proves that $\bar u$ is a solution of \Pb.
\end{proof}

Now, we establish a kind of converse result.

\begin{theorem}
Let $\bar u$ be a strict strong local minimizer of \Pb. Then, there exist $\varepsilon_0 > 0$ and a family of strong local minimizers $\{u_\varepsilon\}_{\varepsilon < \varepsilon_0}$ of problems \Pbe such that $u_\varepsilon \stackrel{*}{\rightharpoonup} \bar u$ in $L^\infty(\Omega)$ and $y_{u_\varepsilon}^\varepsilon \to y_{\bar u}$ strongly in $H_0^1(\Omega) \cap C(\bar\Omega)$ as $\varepsilon \to 0$.
\label{T4.3}
\end{theorem}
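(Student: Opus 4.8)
The plan is to prove this ``converse'' statement by a standard localization-and-compactness argument adapted to the strong-topology framework of this paper. Since $\bar u$ is a \emph{strict strong} local minimizer, there exists $\varepsilon_1 > 0$ such that $J(\bar u) < J(u)$ for every $u \in \Uad$, $u \neq \bar u$, with $\|y_u - \bar y\|_{C(\bar\Omega)} < \varepsilon_1$. The key idea is to restrict each perturbed problem $\Pbe$ to the closed ball determined by this radius and solve the restricted problem, thereby forcing the minimizers to live near $\bar u$.

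First I would introduce the auxiliary localized problems
\[
   \mbox{\rm (P$_\varepsilon^{\mathrm{loc}}$)} \quad \min_{u \in \Uad,\ \|y_u^\varepsilon - \bar y\|_{C(\bar\Omega)} \le \varepsilon_1} J_\varepsilon(u).
\]
Using \eqref{E4.3}, the admissible set $\{u \in \Uad : \|y_u^\varepsilon - \bar y\|_{C(\bar\Omega)} \le \varepsilon_1\}$ is nonempty for $\varepsilon$ small (it contains $\bar u$ once $C_2\|\xi_\varepsilon\|_{L^2(\Omega)} < \varepsilon_1$), and one checks it is weak$^*$ sequentially closed in $L^\infty(\Omega)$: if $u_k \stackrel{*}{\rightharpoonup} u$ then Theorem \ref{T2.1} (combined with \eqref{E4.3}) gives $y_{u_k}^\varepsilon \to y_u^\varepsilon$ in $C(\bar\Omega)$, so the constraint passes to the limit. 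The same lower-semicontinuity argument used right after Assumption \ref{A3} (Lebesgue dominated convergence applied to $L_0(x,y_u^\varepsilon)$, plus continuity of the linear terms $\int g u$ and $\int \eta_\varepsilon y_u^\varepsilon$) shows $J_\varepsilon$ attains its minimum on this set. Let $u_\varepsilon$ be a minimizer of $\mbox{\rm (P$_\varepsilon^{\mathrm{loc}}$)}$.

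Next I would extract the limit. By boundedness of $\Uad$ in $L^\infty(\Omega)$, any sequence $\varepsilon_k \to 0$ admits a subsequence with $u_{\varepsilon_k} \stackrel{*}{\rightharpoonup} u^*$ in $L^\infty(\Omega)$, and by Theorem \ref{T2.1} together with \eqref{E4.3} we get $y_{u_{\varepsilon_k}}^{\varepsilon_k} \to y_{u^*}$ strongly in $H_0^1(\Omega) \cap C(\bar\Omega)$; in particular $\|y_{u^*} - \bar y\|_{C(\bar\Omega)} \le \varepsilon_1$. Mimicking the comparison chain in the proof of Theorem \ref{T4.2}, and comparing $u_{\varepsilon_k}$ against the feasible competitor $\bar u$, I would deduce $J(u^*) \le \liminf J_{\varepsilon_k}(u_{\varepsilon_k}) \le \limsup J_{\varepsilon_k}(\bar u) = J(\bar u)$. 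Strict strong local minimality then forces $u^* = \bar u$; since the limit is independent of the subsequence, the whole family converges, $u_\varepsilon \stackrel{*}{\rightharpoonup} \bar u$ and $y_{u_\varepsilon}^\varepsilon \to \bar y$.

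The main obstacle, and the final step, is upgrading the localized minimizers $u_\varepsilon$ to genuine \emph{strong local} minimizers of the \emph{unconstrained} problem $\Pbe$. This is where the convergence just proved is essential: since $\|y_{u_\varepsilon}^\varepsilon - \bar y\|_{C(\bar\Omega)} \to 0$, there is $\varepsilon_0$ so that for $\varepsilon < \varepsilon_0$ the minimizer satisfies the \emph{strict} inequality $\|y_{u_\varepsilon}^\varepsilon - \bar y\|_{C(\bar\Omega)} < \varepsilon_1$, i.e.\ $u_\varepsilon$ lies in the interior of the localizing constraint in the $C(\bar\Omega)$-state topology. Any competitor $u \in \Uad$ with $\|y_u^\varepsilon - y_{u_\varepsilon}^\varepsilon\|_{C(\bar\Omega)}$ small enough then still satisfies $\|y_u^\varepsilon - \bar y\|_{C(\bar\Omega)} \le \varepsilon_1$ by the triangle inequality, hence is feasible for $\mbox{\rm (P$_\varepsilon^{\mathrm{loc}}$)}$ and cannot beat $u_\varepsilon$; this is precisely the definition of a strong local minimizer of $\Pbe$. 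The care here lies in matching the neighborhood notions: the localization uses the distance of $y_u^\varepsilon$ to $\bar y$, whereas strong local optimality is phrased via the distance of $y_u^\varepsilon$ to $y_{u_\varepsilon}^\varepsilon$, and the interiority obtained from the convergence bridges the two.
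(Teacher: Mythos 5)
Your proposal follows essentially the same route as the paper's proof: localize the perturbed problems to a state-distance ball around $\bar y$, solve the localized problems by the direct method, pass to the limit using Theorem \ref{T2.1} and \eqref{E4.3}, identify the limit with $\bar u$ via strictness, and finally upgrade the localized minimizers to strong local minimizers of \Pbe by interiority. The paper does exactly this with $\Ur = \{u \in \Uad : \|y_u - \bar y\|_{C(\bar\Omega)} \le \rho\}$ (unperturbed states) and the problems \Pbr, \Pbre; your variant with perturbed-state balls is equally viable, and it even makes the last step slightly cleaner, since your localization metric coincides with the one appearing in the definition of strong local optimality for \Pbe, whereas with the paper's choice one needs \eqref{E4.3} to bridge the unperturbed and perturbed states.

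There is, however, one slip you should repair: you take the localization radius to be the strict-minimality radius $\varepsilon_1$ itself and impose the \emph{closed} constraint $\|y^\varepsilon_u - \bar y\|_{C(\bar\Omega)} \le \varepsilon_1$. In the limit you then only know $\|y_{u^*} - \bar y\|_{C(\bar\Omega)} \le \varepsilon_1$ together with $J(u^*) \le J(\bar u)$; if it happens that $\|y_{u^*} - \bar y\|_{C(\bar\Omega)} = \varepsilon_1$, the strict inequality $J(\bar u) < J(u)$, which you possess only on the \emph{open} ball of radius $\varepsilon_1$, yields no contradiction, so you cannot conclude $u^* = \bar u$. The fix is immediate and is what the paper does implicitly when it asserts the existence of $\rho > 0$ such that $\bar u$ is the \emph{unique} solution of \Pbr: localize with any radius $\rho < \varepsilon_1$ (say $\rho = \varepsilon_1/2$), so that the closed $\rho$-ball is contained in the open $\varepsilon_1$-ball. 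With this modification the remainder of your argument, including the subsequence principle and the interiority upgrade, goes through verbatim.
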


\begin{proof}
Since $\bar u$ is a strict strong local minimizer of \Pb, there exists $\rho > 0$ such that $\bar u$ is the unique solution of the problem
\[
   \Pbr \  \min_{u \in \Ur} J(u),
\]
where $\Ur = \{u \in \Uad : \|y_u - y_{\bar u}\|_{C(\bar\Omega)} \le \rho\}$. Now, for every $\varepsilon > 0$ we define the problems
\[
   \Pbre \  \min_{u \in \Ur} J_\varepsilon(u).
\]
Using Theorem \ref{T2.1} we deduce that $\Ur$ is weakly$^*$ closed in $L^\infty(\Omega)$, hence the existence of a solution $u_\varepsilon$ of \Pbre can be proved as we indicated for \Pb. Moreover, arguing as in the proof of Theorem \ref{T4.2}, we deduce the existence of sequences $\{u_{\varepsilon_k}\}_{k = 1}^\infty$ converging weakly$^*$ to a solution $u$ of \Pbr in $L^\infty(\Omega)$ and such that  $y^{\varepsilon_k}_{u_{\varepsilon_k}} \to y_u$ strongly in $H_0^1(\Omega) \cap C(\bar\Omega)$. Since $\bar u$ is the unique solution of \Pbr, we conclude the convergence $u_\varepsilon \stackrel{*}{\rightharpoonup} \bar u$ in $L^\infty(\Omega)$ and $y^\varepsilon_{u_\varepsilon} \to y_{\bar u}$ in $H_0^1(\Omega) \cap C(\bar\Omega)$ as $\varepsilon \to 0$. Therefore, there exists $\varepsilon_0 > 0$ such that $\|y^\varepsilon_{u_\varepsilon} - y_{\bar u}\|_{C(\bar\Omega)} < \rho$ for every $\varepsilon < \varepsilon_0$. This implies that $u_\varepsilon$ is a strong local minimizer of \Pbe for every $\varepsilon < \varepsilon_0$, which completes the proof.
\end{proof}

Now we establish our main theorem of this section.

\begin{theorem}
Let $\bar u$ be a local minimizer of \Pb satisfying Assumption \ref{A4} and $\{u_\varepsilon\}_{\varepsilon < \varepsilon_0}$ a family of local solutions of problems \Pbe such that $u_\varepsilon \stackrel{*}{\rightharpoonup} \bar u$ in $L^\infty(\Omega)$ as $\varepsilon \to 0$. Then, there exist $\hat\varepsilon \in (0,\varepsilon_0)$ and  a constant $C > 0$ such that
\begin{equation}
\|y^\varepsilon_{u_\varepsilon} - \bar y\|_{L^2(\Omega)} \le C\Big(\|\xi_\varepsilon\|_{L^2(\Omega)} + \|\eta_\varepsilon\|_{L^2(\Omega)}\Big) \quad \forall \varepsilon < \hat\varepsilon,
\label{E4.5}
\end{equation}
where $\bar y = y_{\bar u}$.
\label{T4.4}
\end{theorem}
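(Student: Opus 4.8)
The plan is to exploit the quadratic growth condition \eqref{E3.13} at $\bar u$ together with the first order optimality conditions of both \Pb and \Pbe, and to convert everything into a bound on $\|z_{\bar u,u_\varepsilon - \bar u}\|_{L^2(\Omega)}$, from which \eqref{E4.5} follows via \eqref{E2.14} and \eqref{E4.3}. Write $v_\varepsilon = u_\varepsilon - \bar u$. Since $u_\varepsilon \stackrel{*}{\rightharpoonup}\bar u$, Theorem \ref{T2.1} gives $y_{u_\varepsilon}\to\bar y$ in $C(\bar\Omega)$, so for $\varepsilon$ small enough $\|y_{u_\varepsilon} - \bar y\|_{C(\bar\Omega)} < \alpha$ and the hypotheses of \eqref{E3.13}, Lemma \ref{L3.3}, and \eqref{E2.14}--\eqref{E2.15} are all in force.

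First I would reduce the growth condition to a one-sided estimate. By \eqref{E3.13},
\[
\gamma\|z_{\bar u,v_\varepsilon}\|^2_{L^2(\Omega)} \le J'(\bar u)v_\varepsilon + J''(\bar u)v_\varepsilon^2.
\]
Applying the mean value theorem to $t\mapsto J'(\bar u + tv_\varepsilon)v_\varepsilon$ gives $\theta\in[0,1]$ with $J'(u_\varepsilon)v_\varepsilon = J'(\bar u)v_\varepsilon + J''(\bar u + \theta v_\varepsilon)v_\varepsilon^2$, hence
\[
J'(\bar u)v_\varepsilon + J''(\bar u)v_\varepsilon^2 = J'(u_\varepsilon)v_\varepsilon + [J''(\bar u) - J''(\bar u + \theta v_\varepsilon)]v_\varepsilon^2.
\]
Lemma \ref{L3.3} with $\rho = \gamma/2$ bounds the last term by $\tfrac{\gamma}{2}\|z_{\bar u,v_\varepsilon}\|^2_{L^2(\Omega)}$ for $\varepsilon$ small, so that $\tfrac{\gamma}{2}\|z_{\bar u,v_\varepsilon}\|^2_{L^2(\Omega)} \le J'(u_\varepsilon)v_\varepsilon$.

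Next I would feed in the optimality of $u_\varepsilon$ for \Pbe. Denoting by $\varphi^\varepsilon_{u}$ the adjoint state of the perturbed problem (solving \eqref{E3.6} with $y^\varepsilon_u$ in place of $y_u$ and an extra right-hand side $\eta_\varepsilon$), the variational inequality tested with $\bar u$ gives $J_\varepsilon'(u_\varepsilon)v_\varepsilon \le 0$. Therefore
\[
J'(u_\varepsilon)v_\varepsilon \le J'(u_\varepsilon)v_\varepsilon - J_\varepsilon'(u_\varepsilon)v_\varepsilon = \int_\Omega (\varphi_{u_\varepsilon} - \varphi^\varepsilon_{u_\varepsilon})v_\varepsilon\dx.
\]
Subtracting the two adjoint equations and using the mean value theorem, $\delta\varphi := \varphi_{u_\varepsilon} - \varphi^\varepsilon_{u_\varepsilon}$ solves $\mathcal{A}^*\delta\varphi + \frac{\partial f}{\partial y}(x,y_{u_\varepsilon})\delta\varphi = R_\varepsilon$, where $R_\varepsilon$ collects the differences of $\frac{\partial L}{\partial y}$ and of $\frac{\partial f}{\partial y}$ evaluated at $y_{u_\varepsilon}$ and $y^\varepsilon_{u_\varepsilon}$, the latter multiplied by the adjoint state $\varphi^\varepsilon_{u_\varepsilon}$, together with the term $-\eta_\varepsilon$. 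Using \eqref{E2.5}, \eqref{E3.2}, the uniform boundedness of the adjoint states (\eqref{E3.15} for $\varphi_{u_\varepsilon}$ and its analogue for $\varphi^\varepsilon_{u_\varepsilon}$, which holds since $\eta_\varepsilon$ is bounded in $L^2(\Omega)$), and the key estimate \eqref{E4.3}, I expect $\|R_\varepsilon\|_{L^2(\Omega)} \le C(\|\xi_\varepsilon\|_{L^2(\Omega)} + \|\eta_\varepsilon\|_{L^2(\Omega)})$.

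Finally I would close the estimate by a duality argument. Since $z_{u_\varepsilon,v_\varepsilon}$ solves \eqref{E2.9} at $y_{u_\varepsilon}$, testing that equation against $\delta\varphi$ (equivalently, testing the $\delta\varphi$ equation against $z_{u_\varepsilon,v_\varepsilon}$ and using the adjoint relation $\langle\mathcal{A}z,\delta\varphi\rangle = \langle z,\mathcal{A}^*\delta\varphi\rangle$) yields $\int_\Omega \delta\varphi\, v_\varepsilon\dx = \int_\Omega R_\varepsilon z_{u_\varepsilon,v_\varepsilon}\dx$. Combining the displays and using \eqref{E2.15} to pass from $z_{u_\varepsilon,v_\varepsilon}$ to $z_{\bar u,v_\varepsilon}$, I obtain
\[
\tfrac{\gamma}{2}\|z_{\bar u,v_\varepsilon}\|^2_{L^2(\Omega)} \le \tfrac{3}{2}C(\|\xi_\varepsilon\|_{L^2(\Omega)} + \|\eta_\varepsilon\|_{L^2(\Omega)})\|z_{\bar u,v_\varepsilon}\|_{L^2(\Omega)},
\]
whence $\|z_{\bar u,v_\varepsilon}\|_{L^2(\Omega)} \le C(\|\xi_\varepsilon\|_{L^2(\Omega)} + \|\eta_\varepsilon\|_{L^2(\Omega)})$. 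Then \eqref{E2.14} turns this into the same bound for $\|y_{u_\varepsilon} - \bar y\|_{L^2(\Omega)}$, and a final application of \eqref{E4.3} (triangle inequality with $\|y^\varepsilon_{u_\varepsilon} - y_{u_\varepsilon}\|_{L^2(\Omega)} \le C_2\|\xi_\varepsilon\|_{L^2(\Omega)}$) gives \eqref{E4.5}. The main obstacle is the adjoint-difference step: correctly identifying $R_\varepsilon$ and arranging the duality pairing so that the integral of $\delta\varphi$ against $v_\varepsilon$ becomes an $L^2(\Omega)$ inner product of $R_\varepsilon$ with $z_{u_\varepsilon,v_\varepsilon}$, which is exactly what lets the quadratic $\|z_{\bar u,v_\varepsilon}\|^2_{L^2(\Omega)}$ cancel one power and produce the linear Lipschitz bound.
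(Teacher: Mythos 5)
Your proposal is correct, and its overall architecture coincides with the paper's: test the variational inequality of \Pbe with $\bar u$, extract coercivity of $J'(u_\varepsilon)(u_\varepsilon-\bar u)$ from Assumption \ref{A4}, bound the difference $J'(u_\varepsilon)-J'_\varepsilon(u_\varepsilon)$ applied to $u_\varepsilon-\bar u$ by $C\big(\|\xi_\varepsilon\|_{L^2(\Omega)}+\|\eta_\varepsilon\|_{L^2(\Omega)}\big)\|z\|_{L^2(\Omega)}$ so that one power of $\|z\|_{L^2(\Omega)}$ cancels, and conclude via \eqref{E2.14} and \eqref{E4.3}. However, both of your key technical steps genuinely differ from the paper's. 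First, where the paper invokes Lemma \ref{L4.1}, whose proof runs through the Hamiltonian and the auxiliary adjoint function $\psi_{\bar u,v}$ (estimate \eqref{E4.7}), you obtain $\frac{\gamma}{2}\|z_{\bar u,u_\varepsilon-\bar u}\|^2_{L^2(\Omega)}\le J'(u_\varepsilon)(u_\varepsilon-\bar u)$ directly by the mean value theorem applied to $t\mapsto J'(\bar u+t(u_\varepsilon-\bar u))(u_\varepsilon-\bar u)$ combined with Lemma \ref{L3.3}; this is exactly the technique used in the proof of Theorem \ref{T3.3}, it is valid here (Lemma \ref{L3.3} holds for all $v\in L^2(\Omega)$ and all $\theta\in[0,1]$, and $\|y_{u_\varepsilon}-\bar y\|_{C(\bar\Omega)}\to 0$ by Theorem \ref{T2.1}), and it is more elementary; what the paper's route buys is reusability of \eqref{E4.7}, which is invoked again in Lemma \ref{L5.1} with a different choice of $\rho$ for the mixed-norm growth of Assumption \ref{A5}. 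Second, where the paper estimates the perturbation primally --- decomposing it into the three integrals of \eqref{E4.8} involving the linearized states $z_{u_\varepsilon,\cdot}$, $z^\varepsilon_{u_\varepsilon,\cdot}$ and using \eqref{E4.4} --- you estimate it dually: you write it as $\int_\Omega(\varphi_{u_\varepsilon}-\varphi^\varepsilon_{u_\varepsilon})(u_\varepsilon-\bar u)\dx$, bound the residual $R_\varepsilon$ of the adjoint-difference equation in $L^2(\Omega)$, and flip by duality to $\int_\Omega R_\varepsilon z_{u_\varepsilon,u_\varepsilon-\bar u}\dx$; this is equivalent in content, with your $L^2$ bound on $R_\varepsilon$ playing the role of \eqref{E4.3}--\eqref{E4.4}, and it needs the additional (easy) observation, which you make, that the perturbed adjoint states are uniformly bounded in $C(\bar\Omega)$ thanks to \eqref{E4.2}, \eqref{E3.2} and the boundedness of $\{\eta_\varepsilon\}$ in $L^2(\Omega)$. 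Two harmless loose ends: \eqref{E4.3} controls the $C(\bar\Omega)$ norm, so your $L^2$ estimates should carry a factor $\sqrt{|\Omega|}$; and the final division by $\|z_{\bar u,u_\varepsilon-\bar u}\|_{L^2(\Omega)}$ is legitimate because if this norm vanishes then \eqref{E2.14} gives $y_{u_\varepsilon}=\bar y$ and \eqref{E4.5} is trivial.
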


Let us observe that Assumption \ref{A4} implies that $\bar u$ satisfies \eqref{E3.12}. Hence, $\bar u$ is a strict strong local minimizer of \Pb and, consequently, Theorem \ref{T4.3} ensures the existence of a family $\{u_\varepsilon\}_{\varepsilon < \varepsilon_0}$ of strong local minimizers of problems \Pbe satisfying the conditions of the above theorem. Before proving this theorem we establish the following lemma.

\begin{lemma}
Let $\bar u$ satisfy the assumptions of Theorem \ref{T4.4}. Then, there exists $\varepsilon > 0$ such that
\begin{equation}
J'(u)(u - \bar u) \ge \frac{\gamma}{2}\|z_{u,u - \bar u}\|^2_{L^2(\Omega)} \quad \forall u \in \Uad \text{ with } \|y_u - \bar y\|_{C(\bar\Omega)} < \varepsilon,
\label{E4.6}
\end{equation}
where $\gamma$ is given in Assumption \ref{A4}.
\label{L4.1}
\end{lemma}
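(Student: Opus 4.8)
The plan is to exploit Assumption \ref{A4}, which gives a lower bound at the reference control $\bar u$, and transport that bound to the perturbed control $u$ by a Taylor expansion combined with the Lipschitz-type estimates already proved. Concretely, I would start from the inequality in \eqref{E3.13}, which reads $J'(\bar u)(u-\bar u)+J''(\bar u)(u-\bar u)^2\ge\gamma\|z_{\bar u,u-\bar u}\|^2_{L^2(\Omega)}$ whenever $\|y_u-\bar y\|_{C(\bar\Omega)}<\alpha$. The goal \eqref{E4.6} is an estimate on $J'(u)(u-\bar u)$ alone, evaluated at $u$ rather than $\bar u$, so the task is to compare $J'(u)(u-\bar u)$ with the quantity $J'(\bar u)(u-\bar u)+J''(\bar u)(u-\bar u)^2$ and to replace $z_{\bar u,u-\bar u}$ by $z_{u,u-\bar u}$ on the right-hand side.

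First I would write, using the mean value theorem for $J'$ along the segment joining $\bar u$ and $u$,
\begin{equation*}
J'(u)(u-\bar u)=J'(\bar u)(u-\bar u)+J''(u_\theta)(u-\bar u)^2
\end{equation*}
for some $\theta\in(0,1)$ with $u_\theta=\bar u+\theta(u-\bar u)$. Adding and subtracting $J''(\bar u)(u-\bar u)^2$, this becomes
\begin{equation*}
J'(u)(u-\bar u)=\big[J'(\bar u)(u-\bar u)+J''(\bar u)(u-\bar u)^2\big]+\big[J''(u_\theta)-J''(\bar u)\big](u-\bar u)^2.
\end{equation*}
The bracketed first term is bounded below by $\gamma\|z_{\bar u,u-\bar u}\|^2_{L^2(\Omega)}$ via Assumption \ref{A4}, provided $\|y_u-\bar y\|_{C(\bar\Omega)}<\alpha$. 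The second term is controlled by Lemma \ref{L3.3}: for any prescribed $\rho>0$ there is $\varepsilon>0$ such that $\big|[J''(u_\theta)-J''(\bar u)](u-\bar u)^2\big|<\rho\|z_{\bar u,u-\bar u}\|^2_{L^2(\Omega)}$ once $\|y_u-\bar y\|_{C(\bar\Omega)}<\varepsilon$. Choosing $\rho=\gamma/4$ (and $\varepsilon\le\alpha$) therefore yields $J'(u)(u-\bar u)\ge\frac{3\gamma}{4}\|z_{\bar u,u-\bar u}\|^2_{L^2(\Omega)}$.

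It remains to pass from $z_{\bar u,u-\bar u}$ to $z_{u,u-\bar u}$ on the right. Here I would invoke \eqref{E2.15} of Lemma \ref{L2.3}-$(iii)$ with $v=u-\bar u$ and $X=L^2(\Omega)$: shrinking $\varepsilon$ if necessary so that $\|y_u-\bar y\|_{C(\bar\Omega)}$ is below the threshold of that lemma, we get $\|z_{u,u-\bar u}\|_{L^2(\Omega)}\le\frac{3}{2}\|z_{\bar u,u-\bar u}\|_{L^2(\Omega)}$, hence $\|z_{\bar u,u-\bar u}\|^2_{L^2(\Omega)}\ge\frac{4}{9}\|z_{u,u-\bar u}\|^2_{L^2(\Omega)}$. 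Combining, $J'(u)(u-\bar u)\ge\frac{3\gamma}{4}\cdot\frac{4}{9}\|z_{u,u-\bar u}\|^2_{L^2(\Omega)}=\frac{\gamma}{3}\|z_{u,u-\bar u}\|^2_{L^2(\Omega)}$, which already beats the claimed constant $\gamma/2$ if one is slightly more careful, or one simply tightens $\rho$ and the comparison constant to land exactly on $\frac{\gamma}{2}$. The only genuinely delicate point is bookkeeping the several smallness thresholds ($\alpha$ from Assumption \ref{A4}, the $\varepsilon$ from Lemma \ref{L3.3} for the chosen $\rho$, and the $\varepsilon$ from \eqref{E2.15}) and taking their minimum; the constants must be chosen so the product of the loss from Lemma \ref{L3.3} and the loss from \eqref{E2.15} leaves a factor at least $\gamma/2$. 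No new analytic estimate is needed — everything rests on the three previously established tools — so the main obstacle is purely the constant-chasing to guarantee the factor $\frac{\gamma}{2}$ rather than some smaller multiple of $\gamma$.
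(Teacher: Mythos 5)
Your route is genuinely different from the paper's. You differentiate along the segment and apply the mean value theorem to $t\mapsto J'(\bar u+t(u-\bar u))(u-\bar u)$, then absorb the curvature-variation term $[J''(u_\theta)-J''(\bar u)](u-\bar u)^2$ with Lemma \ref{L3.3} --- in effect you run on $J'$ the same argument that the paper runs on $J$ in the proof of Theorem \ref{T3.3}. The paper's own proof of Lemma \ref{L4.1} instead expands the adjoint state: it introduces the Hamiltonian, defines the auxiliary function $\psi_{\bar u,u-\bar u}$, proves in Step I the estimate \eqref{E4.7} (that $\varphi_u-\bar\varphi-\psi_{\bar u,u-\bar u}$ paired against $u-\bar u$ is of size $\rho\|z_{\bar u,u-\bar u}\|^2_{L^2(\Omega)}$), and then uses the identity $\int_\Omega\psi_{\bar u,v}v\dx=J''(\bar u)v^2$ to reduce to Assumption \ref{A4}. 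Both arguments are legitimate and rest on comparable ingredients; yours is shorter and avoids introducing $\psi$, while the paper's Step I estimate \eqref{E4.7} is an investment that gets reused verbatim in Lemma \ref{L5.1} (there with $\rho=\gamma/(2C_2)$) for the $L^1$-weighted inequality under Assumption \ref{A5}. Your Taylor argument would adapt to that setting too, but would have to be rerun there.

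The one genuine defect is the final constant bookkeeping. Using \eqref{E2.15} literally, with its fixed factor $3/2$, your chain gives $J'(u)(u-\bar u)\ge\frac{3\gamma}{4}\cdot\frac{4}{9}\|z_{u,u-\bar u}\|^2_{L^2(\Omega)}=\frac{\gamma}{3}\|z_{u,u-\bar u}\|^2_{L^2(\Omega)}$, and $\gamma/3<\gamma/2$: this does \emph{not} beat the claimed constant, and even sending $\rho\to0$ only yields $\frac{4\gamma}{9}<\frac{\gamma}{2}$, so tightening $\rho$ alone cannot close the gap --- the factor $3/2$ is frozen in the statement of \eqref{E2.15}. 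The fix is to bypass \eqref{E2.15} and use \eqref{E2.13} directly: for $X=L^2(\Omega)$ it gives $\|z_{u,v}\|_{L^2(\Omega)}\le\bigl(1+C_{L^2(\Omega)}\sqrt{|\Omega|}\,\|y_u-\bar y\|_{C(\bar\Omega)}\bigr)\|z_{\bar u,v}\|_{L^2(\Omega)}$, a comparison constant that tends to $1$; taking, say, $\rho=\gamma/6$ and $\varepsilon$ small enough that the square of this factor is at most $5/3$, you land exactly on $\gamma/2$. (For what it is worth, the same conversion is left implicit in the paper: its proof ends with the bound $\frac{\gamma}{2}\|z_{\bar u,u-\bar u}\|^2_{L^2(\Omega)}$ while \eqref{E4.6} is phrased with $z_{u,u-\bar u}$, and in the only place the lemma is used, the proof of Theorem \ref{T4.4}, any fixed positive multiple of $\|z_{u_\varepsilon,u_\varepsilon-\bar u}\|^2_{L^2(\Omega)}$ would serve equally well.)
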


\begin{proof}
We denote by $H:\Omega \times \mathbb{R}^3 \longrightarrow \mathbb{R}$ the Hamiltonian associated with the control problem \Pb:
\[
H(x,y,\varphi,u) = L(x,y,u) + \varphi[u - f(x,y)].
\]
For every $u \in \Uad$ and $v \in L^2(\Omega)$, we define $\psi_{u,v} \in H_0^1(\Omega) \cap C(\bar\Omega)$ as the function satisfying
\[
\mathcal{A}^*\psi_{u,v} + \frac{\partial f}{\partial y}(x,y_u)\psi_{u,v} = \frac{\partial^2H}{\partial y^2}(x,y_u,\varphi_u,u)z_{u,v}.
\]
We split the proof into two steps.

{\em Step I.-} Here we prove that for every $\rho > 0$ there exists $\varepsilon > 0$ such that for every $u \in \Uad$ with $\|y_u - \bar y\|_{C(\bar\Omega)} < \varepsilon$ we have
\begin{equation}
\Big|\int_\Omega(\varphi_u - \bar\varphi - \psi_{\bar u,u - \bar u})(u - \bar u)\dx\Big| \le \rho\|z_{\bar u,u - \bar u}\|^2_{L^2(\Omega)}.
\label{E4.7}
\end{equation}
Setting $\pi = \varphi_u - \bar\varphi - \psi_{\bar u,u - \bar u}$ and subtracting their respective equations it follows with the mean value theorem
\begin{align*}
\mathcal{A}^*\pi + \frac{\partial f}{\partial y}(x,\bar y)\pi &= \frac{\partial H}{\partial y}(x,y_u,\varphi_u,u) - \frac{\partial H}{\partial y}(x,\bar y,\bar\varphi,\bar u)\\
& -\frac{\partial^2H}{\partial y^2}(x,\bar y,\bar\varphi,\bar u)z_{\bar u,u - \bar u} - \frac{\partial^2H}{\partial y\partial\varphi}(x,\bar y,\bar\varphi,\bar u)(\varphi_u - \bar\varphi)\\
&= \frac{\partial^2H}{\partial y^2}(x,y_\theta,\varphi_\theta,u_\theta)(y_u - \bar y) - \frac{\partial^2H}{\partial y^2}(x,\bar y,\bar\varphi,\bar u)z_{\bar u,u - \bar u}\\
& + \Big[\frac{\partial^2H}{\partial y\partial\varphi}(x,y_\theta,\varphi_\theta,u_\theta) - \frac{\partial^2H}{\partial y\partial\varphi}(x,\bar y,\bar\varphi,\bar u)\Big](\varphi_u - \bar\varphi)\\
&=\frac{\partial^2H}{\partial y^2}(x,y_\theta,\varphi_\theta,u_\theta)(y_u - \bar y - z_{\bar u,u - \bar u})\\
& + \Big[\frac{\partial^2H}{\partial y^2}(x,y_\theta,\varphi_\theta,u_\theta) - \frac{\partial^2H}{\partial y^2}(x,\bar y,\bar\varphi,\bar u)\Big]z_{\bar u,u - \bar u}\\
&+ \Big[\frac{\partial^2H}{\partial y\partial\varphi}(x,y_\theta,\varphi_\theta,u_\theta) - \frac{\partial^2H}{\partial y\partial\varphi}(x,\bar y,\bar\varphi,\bar u)\Big](\varphi_u - \bar\varphi).
\end{align*}
This implies
\begin{align*}
\int_\Omega\pi&(u - \bar u)\dx = \int_\Omega\pi\Big(\mathcal{A}z_{\bar u,u - \bar u} + \frac{\partial f}{\partial y}(x,\bar y)z_{\bar u,u - \bar u}\Big)\dx\\
& = \int_\Omega\Big(\mathcal{A}^*\pi + \frac{\partial f}{\partial y}(x,\bar y)\pi\Big)z_{\bar u,u - \bar u}\dx\\
& = \int_\Omega\frac{\partial^2H}{\partial y^2}(x,y_\theta,\varphi_\theta,u_\theta)(y_u - \bar y - z_{\bar u,u - \bar u})z_{\bar u,u - \bar u}\dx\\
& + \int_\Omega\Big[\frac{\partial^2H}{\partial y^2}(x,y_\theta,\varphi_\theta,u_\theta) - \frac{\partial^2H}{\partial y^2}(x,\bar y,\bar\varphi,\bar u)\Big]z^2_{\bar u,u - \bar u}\dx\\
& + \int_{\Omega}\Big[\frac{\partial^2H}{\partial y\partial\varphi}(x,y_\theta,\varphi_\theta,u_\theta) - \frac{\partial^2H}{\partial y\partial\varphi}(x,\bar y,\bar\varphi,\bar u)\Big](\varphi_u - \bar\varphi)z_{\bar u,u - \bar u}\dx = I_1+I_2+I_3.
\end{align*}
We estimate every term $I_i$. For the first term we use \eqref{E2.5}, \eqref{E2.8}, \eqref{E2.12} with $s = 2$, \eqref{E2.14} with $X = L^2(\Omega)$, \eqref{E3.2},  and \eqref{E3.15} as follows
\begin{align*}
|I_1|& \le (C_{L,K_U} + M_UC_{f,K_U})\|y_u - \bar y - z_{\bar u,u - \bar u}\|_{L^2(\Omega)}\|z_{\bar u,u - \bar u}\|_{L^2(\Omega)}\\
& \le (C_{L,K_U} + M_UC_{f,K_U})M_2\|y_u - \bar y\|^2_{L^2(\Omega)}\|z_{\bar u,u - \bar u}\|_{L^2(\Omega)}\\
& \le 2(C_{L,K_U} + M_UC_{f,K_U})M_2\sqrt{|\Omega|}\varepsilon\|z_{\bar u,u - \bar u}\|^2_{L^2(\Omega)}.
\end{align*}
The second term is estimated with \eqref{E2.6}, \eqref{E2.8}, \eqref{E3.3}, \eqref{E3.14}, \eqref{E3.15}, \eqref{E3.16}, leading to $|I_2| \le \rho\|z_{\bar u,u - \bar u}\|^2_{L^2(\Omega)}$ for $\rho$ arbitrarily small if $\varepsilon$ is taken according to $\rho$. Finally, for the last term we use the same inequalities as for $I_2$, that estimate \eqref{E3.16} holds true for $L^2(\Omega)$ instead of $C(\bar \Omega)$ and additionally \eqref{E2.15} with $X = L^2(\Omega)$ to get
\begin{align*}
|I_3| &\le \rho\|\varphi_u - \bar\varphi\|_{L^2(\Omega)}\|z_{\bar u,u - \bar u}\|_{L^2(\Omega)}\\
& \le \rho C_2(C_{L,K_U} + M_UC_{f,K_U})\|y_u - \bar y\|_{L^2(\Omega)}\|z_{\bar u,u - \bar u}\|_{L^2(\Omega)}\\
& \le 2\rho C_2(C_{L,K_U} + M_UC_{f,K_U})\|z_{\bar u,u - \bar u}\|^2_{L^2(\Omega)},
\end{align*}
where again $\rho$ is arbitrarily small if $\varepsilon$ is chosen according to it. Thus, \eqref{E4.7} follows from the proved estimates.\vspace{2mm}

{\em Step II-} Now, we prove \eqref{E4.6}. First, we observe that for every $v \in L^2(\Omega)$
\begin{align*}
&\int_\Omega\psi_{\bar u,v}v\dx = \int_\Omega\psi_{\bar u,v}\Big(\mathcal{A}z_{\bar u,v} + \frac{\partial f}{\partial y}(x,\bar y)z_{\bar u,v}\Big)\dx\\
& = \int_\Omega\Big(\mathcal{A}^*\psi_{\bar u,v} + \frac{\partial f}{\partial y}(x,\bar y)\psi_{\bar u,v}\Big)z_{\bar u,v}\dx = \int_\Omega\frac{\partial^2H}{\partial y^2}(x,\bar y,\bar\varphi,\bar u)z_{\bar u,v}^2\dx = J''(\bar u)v^2,
\end{align*}
where the last inequality follows from \eqref{E3.5} and the definition of the Hamiltonian. Let $\varepsilon > 0$ be such that \eqref{E4.7} holds with  $\rho = \frac{\gamma}{2}$. Then, using Assumption \ref{A4} and \eqref{E4.7} we get for $u \in \Uad$ with $\|y_u - \bar y\|_{C(\bar\Omega)} < \varepsilon$

\begin{align*}
&J'(u)(u - \bar u) = \int_\Omega(\varphi_u + g)(u - \bar u)\dx\\
& = \int_\Omega(\varphi_u - \bar\varphi - \psi_{\bar u,u - \bar u})(u - \bar u)\dx + \int_\Omega(\bar\varphi + g + \psi_{\bar u,u - \bar u})(u - \bar u)\dx\\
&\ge -\frac{\gamma}{2}\|z_{\bar u,u - \bar u}\|^2_{L^2(\Omega)} + [J'(\bar u)(u - \bar u) + J''(\bar u)(u - \bar u)^2] \ge \frac{\gamma}{2}\|z_{\bar u,u - \bar u}\|^2_{L^2(\Omega)}.
\end{align*}
\end{proof}

\begin{remark}
Let us notice that if $\bar u$ is a local minimizer of \Pb satisfying Assumption \ref{A4}, then there exists $\varepsilon > 0$ such that there is no stationary point $\hat u$ of \Pb different from $\bar u$ such that $\|y_{\hat u} - \bar y\|_{C(\bar\Omega)} < \varepsilon$. We say that $\hat u$ is a stationary point of \Pb if it satisfies the first order optimality condition. In particular, if $\hat u$ is a stationary point then $J'(\hat u)(\bar u - \hat u) \ge 0$. This contradicts \eqref{E4.6} if $\|y_{\hat u} - \bar y\|_{C(\bar\Omega)} < \varepsilon$.
\label{R4.1}
\end{remark}\vspace{2mm}

\begin{proof}[Proof of Theorem \ref{T4.4}]
Using the local optimality of $u_\varepsilon$ we get
\begin{align}
0 &\ge J'_\varepsilon(u_\varepsilon)(u_\varepsilon - \bar u)\notag\\
& = J'(u_\varepsilon)(u_\varepsilon - \bar u) + \int_\Omega\Big[\frac{\partial L}{\partial y}(x,y^\varepsilon_{u_\varepsilon},u_\varepsilon) - \frac{\partial L}{\partial y}(x,y_{u_\varepsilon},u_\varepsilon)\Big]z_{u_\varepsilon,u_\varepsilon - \bar u}\dx\notag\\
&+ \int_\Omega\frac{\partial L}{\partial y}(x,y^\varepsilon_{u_\varepsilon},u_\varepsilon) (z^\varepsilon_{u_\varepsilon,u_\varepsilon - \bar u} - z_{u_\varepsilon,u_\varepsilon - \bar u})\dx + \int_\Omega\eta_\varepsilon z^\varepsilon_{u_\varepsilon,u_\varepsilon - \bar u}\dx. \label{E4.8}
\end{align}
We estimate each one of these four terms. First, we observe that the convergence $u_\varepsilon \rightharpoonup \bar u$ in $L^2(\Omega)$ implies that $\|y_{u_\varepsilon} - \bar y\|_{C(\bar\Omega)} \to 0$; see Theorem \ref{T2.1}. Hence, from Lemma \ref{L4.1} we deduce the existence of $\varepsilon_1 > 0$ such that
\begin{equation}
J'(u_\varepsilon)(u_\varepsilon - \bar u) \ge \frac{\gamma}{2}\|z_{u_\varepsilon,u_\varepsilon - \bar u}\|^2_{L^2(\Omega)}\quad \forall \varepsilon < \varepsilon_1.
\label{E4.9}
\end{equation}
For the second term we use Schwarz's inequality, the mean value theorem, \eqref{E2.8} and \eqref{E4.2}, \eqref{E3.2}, and \eqref{E4.3}
\begin{align}
&\int_\Omega\Big|\frac{\partial L}{\partial y}(x,y^\varepsilon_{u_\varepsilon},u_\varepsilon) - \frac{\partial L}{\partial y}(x,y_{u_\varepsilon},u_\varepsilon)\Big| |z_{u_\varepsilon,u_\varepsilon - \bar u}|\dx\notag\\
& \le C_{L,K_U}\|y^\varepsilon_{u_\varepsilon} - y_{u_\varepsilon}\|_{L^2(\Omega)}\|z_{u_\varepsilon,u_\varepsilon - \bar u}\|_{L^2(\Omega)}\notag\\
& \le C_{L,K_U}\sqrt{|\Omega|}C_2\|\xi_\varepsilon\|_{L^2(\Omega)}\|z_{u_\varepsilon,u_\varepsilon - \bar u}\|_{L^2(\Omega)}.
\label{E4.10}
\end{align}
Now we estimate the third term with \eqref{E3.2} and \eqref{E4.2}, Schwarz's inequality, and \eqref{E4.4}
\begin{align}
\int_\Omega\Big|\frac{\partial L}{\partial y}(x,&y^\varepsilon_{u_\varepsilon},u_\varepsilon)\Big| |z^\varepsilon_{u_\varepsilon,u_\varepsilon - \bar u} - z_{u_\varepsilon,u_\varepsilon - \bar u}|\dx \le \int_\Omega\psi_{K_U}|z^\varepsilon_{u_\varepsilon,u_\varepsilon - \bar u} - z_{u_\varepsilon,u_\varepsilon - \bar u}|\dx\notag\\
&\leq\|\psi_{K_U}\|_{L^2(\Omega)}C_2^2C_{f,K_U}\|\xi_\varepsilon\|_{L^2(\Omega)}\|z_{u_\varepsilon,u_\varepsilon - \bar u}\|_{L^2(\Omega)}.
\label{E4.11}
\end{align}
For the last term we use again \eqref{E4.4} and the fact that $\{\xi_\varepsilon\}_{\varepsilon > 0}$ is bounded in $L^2(\Omega)$
\begin{align}
&\int_\Omega|\eta_\varepsilon z^\varepsilon_{u_\varepsilon,u_\varepsilon - \bar u}|\dx \le \|\eta_\varepsilon\|_{L^2(\Omega)}\Big(\|z^\varepsilon_{u_\varepsilon,u_\varepsilon - \bar u} - z_{u_\varepsilon,u_\varepsilon - \bar u}\|_{L^2(\Omega)} + \|z_{u_\varepsilon,u_\varepsilon - \bar u}\|_{L^2(\Omega)}\Big)\notag\\
&\le \Big(C_2^2C_{f,K_U}\|\xi_\varepsilon\|_{L^2(\Omega)} + 1\Big)\|\eta_\varepsilon\|_{L^2(\Omega)}\|z_{u_\varepsilon,u_\varepsilon - \bar u}\|_{L^2(\Omega)} \le C\|\eta_\varepsilon\|_{L^2(\Omega)}\| z_{u_\varepsilon,u_\varepsilon - \bar u}\|_{L^2(\Omega)}.
\label{E4.12}
\end{align}
Inserting the estimates \eqref{E4.9}--\eqref{E4.12} in \eqref{E4.8} we obtain for some constant $C' > 0$ and every $\varepsilon < \varepsilon_1$
\[
\|z_{u_\varepsilon,u_\varepsilon - \bar u}\|_{L^2(\Omega)} \le C'\Big(\|\xi_\varepsilon\|_{L^2(\Omega)} + \|\eta_\varepsilon\|_{L^2(\Omega)}\Big).
\]
Finally, using \eqref{E2.14} and \eqref{E4.3} we deduce the existence of $\varepsilon_2 \in (0,\varepsilon_1]$ such that for every $\varepsilon < \varepsilon_2$ we have
\begin{align*}
&\|y^\varepsilon_{u_\varepsilon} - \bar y\|_{L^2(\Omega)} \le \|y^\varepsilon_{u_\varepsilon} - y_{u_\varepsilon}\|_{L^2(\Omega)} + \|y_{u_\varepsilon} - \bar y\|_{L^2(\Omega)}\\
&\le C_2\sqrt{|\Omega|}\|\xi_\varepsilon\|_{L^2(\Omega)} + 2\|z_{u_\varepsilon,u_\varepsilon - \bar u}\|_{L^2(\Omega)}\\ &\le C_2\sqrt{|\Omega|} \|\xi_\varepsilon\|_{L^2(\Omega)} + 2C'\Big(\|\xi_\varepsilon\|_{L^2(\Omega)} + \|\eta_\varepsilon\|_{L^2(\Omega)}\Big),
\end{align*}
which proves \eqref{E4.5}.
\end{proof}

\section{Stability of the controls}
\label{S5}

In the previous section, we established Lipschitz stability for the optimal states with respect to state perturbations in the objective functional and to the force in the state equation. In order to obtain stability of the optimal controls an additional assumption is usually required. The reader is referred to
\cite{Qui-Wachsmuth2018} for the following assumption
\begin{equation}
\exists C > 0 \text{ such that } |\{x \in \Omega : |(\varphi + g)(x)| \le \varepsilon\}| \le C\varepsilon\quad \forall \varepsilon > 0.
\label{E5.1}
\end{equation}
Using this assumption and sufficient second order optimality conditions they proved Lipschitz stability of the controls in the $L^1(\Omega)$ norm. However, the assumption \eqref{E5.1} implies that $\bar u$ is bang-bang. As far as we know, there is no proof for stability of the optimal controls when they are not bang-bang. Assumption \ref{A4} that we have considered in the previous sections is applicable for the case of optimal controls that are not bang-bang. Nevertheless, it leads only to Lipschitz stability of the optimal states. Here, we modify Assumption \ref{A4} as follows
\begin{assumption}
There exist numbers $\alpha > 0$ and $\gamma > 0$ such that for all $u \in \Uad$ with $\|y_u - \bar y\|_{C(\bar\Omega)} < \alpha$ the following inequality is fulfilled
\begin{equation}
J'(\bar u)(u - \bar u) + J''(\bar u)(u - \bar u)^2 \ge \gamma\|z_{\bar u,u - \bar u}\|_{L^2(\Omega)}\|u - \bar u\|_{L^1(\Omega)}.
\label{E5.2}
\end{equation}
\label{A5}
\end{assumption}
Under this assumption we will prove Lipschitz stability of the optimal controls. 
It has been proved in \cite{DJV2022} that the sufficient second order conditions plus the structural assumption \eqref{E5.1} imply the existence of positive numbers $\gamma$ and $\alpha$ such that
\begin{equation}
J'(\bar u)(u - \bar u) + J''(\bar u)(u - \bar u)^2 \ge \gamma\|u - \bar u\|^2_{L^1(\Omega)}\ \forall u \in \Uad \text{ with } \|u - \bar u\|_{L^1(\Omega)} < \alpha.
\label{E5.3}
\end{equation}
But we have the next equivalence:

\begin{proposition}
The statement \eqref{E5.3} is equivalent to the existence of positive numbers $\gamma'$ and $\alpha'$ such that
\begin{equation} 
J'(\bar u)(u - \bar u) + J''(\bar u)(u - \bar u)^2 \ge \gamma'\|u - \bar u\|^2_{L^1(\Omega)}\ \forall u \in \Uad \text{ with } \|y_u - \bar y\|_{C(\bar\Omega)} < \alpha'.
\label{E5.4}
\end{equation}
\end{proposition}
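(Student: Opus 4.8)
The plan is to prove the two implications separately, noting that the inequalities in \eqref{E5.3} and \eqref{E5.4} are \emph{identical}; only the region on which each is assumed to hold differs: a small ball in the $L^1(\Omega)$-norm of the control in \eqref{E5.3} versus a small ball in the $C(\bar\Omega)$-norm of the state in \eqref{E5.4}. The implication \eqref{E5.4} $\Rightarrow$ \eqref{E5.3} is the routine one, since a small $L^1$-distance of admissible controls forces a small $C(\bar\Omega)$-distance of states; the converse \eqref{E5.3} $\Rightarrow$ \eqref{E5.4} is delicate, because the state ball may contain controls whose $L^1$-distance to $\bar u$ is as large as the \emph{fixed} quantity $R:=(u_b-u_a)|\Omega|$, so \eqref{E5.3} cannot simply be restricted to it.

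For \eqref{E5.4} $\Rightarrow$ \eqref{E5.3}, I would first observe that for $u,\bar u\in\Uad$ the difference $w=u-\bar u$ satisfies $|w|\le u_b-u_a$, whence by interpolation $\|w\|_{L^r(\Omega)}\le(u_b-u_a)^{(r-1)/r}\|w\|_{L^1(\Omega)}^{1/r}$ for any $r>\tfrac n2$ (e.g.\ $r=2$). Subtracting the state equations and applying the mean value theorem gives $\mathcal A(y_u-\bar y)+a(y_u-\bar y)=w$ with $a=\frac{\partial f}{\partial y}(\cdot,y_\theta)\ge0$ bounded by \eqref{E2.4}, \eqref{E2.5}, \eqref{E2.8}, so Lemma \ref{L2.1}-$(iii)$ yields $\|y_u-\bar y\|_{C(\bar\Omega)}\le C_r\|w\|_{L^r(\Omega)}$. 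Combining these, $\|w\|_{L^1(\Omega)}$ small forces $\|y_u-\bar y\|_{C(\bar\Omega)}$ small, so one chooses $\alpha$ small enough that $\|w\|_{L^1(\Omega)}<\alpha$ implies $\|y_u-\bar y\|_{C(\bar\Omega)}<\alpha'$, and \eqref{E5.4} applies verbatim with the same constant.

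For the hard direction \eqref{E5.3} $\Rightarrow$ \eqref{E5.4}, I would exploit the segment $u_t=\bar u+t(u-\bar u)\in\Uad$. Writing $v=u-\bar u$, $\ell(v)=J'(\bar u)v$, and $q(v)=J''(\bar u)v^2$, the linearity of \eqref{E2.9} gives $z_{\bar u,tv}=tz_{\bar u,v}$, hence $\ell(tv)=t\ell(v)$ and $q(tv)=t^2q(v)$. Given $u$ with $\|y_u-\bar y\|_{C(\bar\Omega)}<\alpha'$, set $\rho=\|v\|_{L^1(\Omega)}\le R$. If $\rho<\alpha$, then \eqref{E5.3} applies directly at $t=1$. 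If $\rho\ge\alpha$, I would apply \eqref{E5.3} to $u_{t_0}$ with $t_0=\alpha/(2\rho)\in(0,\tfrac12]$, legitimate since $\|u_{t_0}-\bar u\|_{L^1(\Omega)}=\alpha/2<\alpha$; this gives $t_0\ell(v)+t_0^2q(v)\ge\gamma t_0^2\rho^2$, and dividing by $t_0$ yields $\ell(v)\ge\tfrac{\gamma\alpha}2\rho-t_0q(v)$. Adding $q(v)$ and using $t_0\le\tfrac12$,
\[
J'(\bar u)v+J''(\bar u)v^2=\ell(v)+q(v)\ge\frac{\gamma\alpha}2\rho+\Big(1-\frac{\alpha}{2\rho}\Big)q(v)\ge\frac{\gamma\alpha}2\rho-|q(v)|.
\]

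The crucial point, and the main obstacle, is to control $q(v)$ uniformly on the state ball. Using the expression \eqref{E3.5}, the uniform $L^\infty$-bound on its integrand from \eqref{E3.2}, \eqref{E2.5}, \eqref{E2.8}, \eqref{E3.15}, and estimate \eqref{E2.14} with $X=L^2(\Omega)$ (valid after shrinking $\alpha'$ below the $\varepsilon$ of Lemma \ref{L2.3}), one obtains $|q(v)|\le C_0\|z_{\bar u,v}\|_{L^2(\Omega)}^2\le C'(\alpha')^2$ for every $u$ in the state ball. Choosing $\alpha'$ so small that $C'(\alpha')^2\le\tfrac{\gamma\alpha^2}4\le\tfrac{\gamma\alpha}4\rho$ (here $\rho\ge\alpha$ is essential), the displayed chain yields $J'(\bar u)v+J''(\bar u)v^2\ge\tfrac{\gamma\alpha}4\rho\ge\tfrac{\gamma\alpha}{4R}\rho^2$. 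Together with the case $\rho<\alpha$, this establishes \eqref{E5.4} with $\gamma'=\min\{\gamma,\,\gamma\alpha/(4R)\}$. The heart of the argument is precisely this absorption step: the rescaling of \eqref{E5.3} along the segment controls only a \emph{linear} lower bound on $\ell(v)$, and this is enough only because the quadratic error $q(v)$ is uniformly small on the $C(\bar\Omega)$-state ball — which is exactly the feature that distinguishes a state-neighborhood from the $L^1(\Omega)$-control neighborhood used in \eqref{E5.3}.
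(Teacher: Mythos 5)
Your proof is correct, but for the implication \eqref{E5.3} $\Rightarrow$ \eqref{E5.4} it takes a genuinely different route from the paper. The paper argues by contradiction and compactness: if \eqref{E5.4} fails, one picks $u_k \in \Uad$ with $\|y_{u_k}-\bar y\|_{C(\bar\Omega)} < 1/k$ violating quadratic growth with constant $1/k$, extracts a weak$^*$ convergent subsequence, identifies the limit as $\bar u$, and then invokes the nontrivial external result of \cite{DJV2022} that \eqref{E5.3} forces $\bar u$ to be bang-bang, so that weak$^*$ convergence upgrades to strong $L^1(\Omega)$ convergence; this lets \eqref{E5.3} be applied to $u_k$ for large $k$, yielding the contradiction. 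Your argument is instead direct and quantitative: using convexity of $\Uad$ and the homogeneity $z_{\bar u,tv}=tz_{\bar u,v}$, you rescale the increment into the $L^1$ ball, divide by $t_0$, and absorb the quadratic term via the uniform bound $|J''(\bar u)(u-\bar u)^2| \le \bigl(C_{L,K_U}+M_UC_{f,K_U}\bigr)\|z_{\bar u,u-\bar u}\|^2_{L^2(\Omega)} \le C'(\alpha')^2$ on the state ball, which follows from \eqref{E3.5}, \eqref{E3.2}, \eqref{E2.5}, \eqref{E3.15}, and \eqref{E2.14}; all steps check out, including membership $u_{t_0}\in\Uad$, the case split $\rho<\alpha$ versus $\rho\ge\alpha$, and the final bound $\rho\le R$. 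What your route buys: it is constructive (explicit $\gamma'=\min\{\gamma,\gamma\alpha/(4R)\}$ and explicit $\alpha'$), entirely self-contained within the estimates of Sections \ref{S2}--\ref{S3}, and it avoids both the compactness argument and the bang-bang structure theorem; what the paper's route buys is brevity, given that the results of \cite{DJV2022} are available. The converse implication \eqref{E5.4} $\Rightarrow$ \eqref{E5.3} is handled identically in both proofs, via $\|y_u-\bar y\|_{C(\bar\Omega)} \le C_2\sqrt{u_b-u_a}\,\|u-\bar u\|^{1/2}_{L^1(\Omega)}$ and the choice $\alpha=\alpha'^2/\bigl(C_2^2(u_b-u_a)\bigr)$.
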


\begin{proof}
Let us assume that \eqref{E5.3} holds, but \eqref{E5.4} is false. Then, for every integer $k \ge 1$ there exists an element $u_k \in \Uad$ such that
\begin{equation}
J'(\bar u)(u_k - \bar u) + J''(\bar u)(u_k - \bar u)^2 < \frac{1}{k}\|u_k - \bar u\|^2_{L^1(\Omega)}\  \text{ and } \ \|y_{u_k} - \bar y\|_{C(\bar\Omega)} < \frac{1}{k}.
\label{E5.5}
\end{equation}
Since $\{u_k\}_{k = 1}^\infty \subset \Uad$ is bounded in $L^\infty(\Omega)$, we can extract a subsequence, denoted in the same way, such that $u_k \stackrel{*}{\rightharpoonup} u$ in $L^\infty(\Omega)$. On one side, \eqref{E5.5} implies that $y_{u_k} \to \bar y$ in $C(\bar\Omega)$. On the other side, from Theorem \ref{T2.1} the convergence $y_{u_k} \to y_u$ in $C(\bar\Omega)$ follows. Then, $y_u = \bar y$ and, consequently, $u = \bar u$ holds.  But \eqref{E5.3} implies that $\bar u$ is bang-bang and, hence, the weak convergence $u_k \stackrel{*}{\rightharpoonup} \bar u$ yields the strong convergence $u_k \to \bar u$ in $L^1(\Omega)$; see \cite[Proposition 12 and Lemma 6]{DJV2022}. Then, \eqref{E5.5} contradicts \eqref{E5.3}.

Let us prove the converse implication. First we observe that given $u \in \Uad$ we get with the mean value theorem
\[
\mathcal{A}(y_u - \bar y) + \frac{\partial f}{\partial y}(x,\bar y + \theta(y_u - \bar y))(y_u - \bar y) = u - \bar u.
\]
Now, using \eqref{E2.2} with $r = 2$ we get
\[
\|y_u - \bar y\|_{C(\bar\Omega)} \le C_2\|u - \bar u\|_{L^2(\Omega)} \le C_2\sqrt{u_b - u_a}\|u - \bar u\|^{\frac{1}{2}}_{L^1(\Omega)}.
\]
Then, taking $\alpha = \frac{\alpha'^2}{C^2_2(u_b - u_a)}$, we obtain that \eqref{E5.4} implies \eqref{E5.3} with $\gamma = \gamma'$.
\end{proof}

From \eqref{E2.3} we infer that \eqref{E5.4} implies \eqref{E5.2}. Hence, the combination of sufficient second order conditions plus \eqref{E5.1} is a stronger assumption than \eqref{E5.2}.

\begin{theorem}
Let $\bar u$ be a local minimizer of \Pb satisfying Assumption \ref{A5} and $\{u_\varepsilon\}_{\varepsilon < \varepsilon_0}$ a family of local solutions of problems \Pbe such that $u_\varepsilon \stackrel{*}{\rightharpoonup} \bar u$ in $L^\infty(\Omega)$ as $\varepsilon \to 0$. Then, there exist $\hat\varepsilon \in (0,\varepsilon_0)$ and  a constant $C > 0$ such that
\begin{equation}
\|u_{\varepsilon} - \bar u\|_{L^1(\Omega)} \le C\Big(\|\xi_\varepsilon\|_{L^2(\Omega)} + \|\eta_\varepsilon\|_{L^2(\Omega)}\Big) \quad \forall \varepsilon < \hat\varepsilon,
\label{E5.6}
\end{equation}
where $\bar y = y_{\bar u}$.
\label{T5.1}
\end{theorem}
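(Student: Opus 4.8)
The plan is to mirror the proof of Theorem~\ref{T4.4}, replacing the purely quadratic lower bound of Lemma~\ref{L4.1} by a sharper one that retains a factor $\|u-\bar u\|_{L^1(\Omega)}$ coming from the stronger Assumption~\ref{A5}. As in Theorem~\ref{T4.4}, the weak$^*$ convergence $u_\varepsilon \stackrel{*}{\rightharpoonup} \bar u$ together with Theorem~\ref{T2.1} gives $\|y_{u_\varepsilon}-\bar y\|_{C(\bar\Omega)}\to 0$, so every estimate valid for $u$ close to $\bar u$ in the state norm applies to $u_\varepsilon$ for $\varepsilon$ small.

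First I would establish the analogue of Lemma~\ref{L4.1}: there exists $\varepsilon_1>0$ such that
\[
J'(u)(u-\bar u) \ge \frac{\gamma}{2}\|z_{u,u-\bar u}\|_{L^2(\Omega)}\|u-\bar u\|_{L^1(\Omega)}\quad \forall u\in\Uad \text{ with } \|y_u-\bar y\|_{C(\bar\Omega)}<\varepsilon_1.
\]
The proof follows Lemma~\ref{L4.1} through the decomposition $J'(u)(u-\bar u)=\int_\Omega \pi(u-\bar u)\dx + [J'(\bar u)(u-\bar u)+J''(\bar u)(u-\bar u)^2]$, where $\pi=\varphi_u-\bar\varphi-\psi_{\bar u,u-\bar u}$; the bracket is now bounded below by $\gamma\|z_{\bar u,u-\bar u}\|_{L^2(\Omega)}\|u-\bar u\|_{L^1(\Omega)}$ using Assumption~\ref{A5} in place of Assumption~\ref{A4}. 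The only genuinely new ingredient is the error term $\int_\Omega\pi(u-\bar u)\dx$. Step~I of Lemma~\ref{L4.1} bounds it by $\rho\|z_{\bar u,u-\bar u}\|^2_{L^2(\Omega)}$ with $\rho$ arbitrarily small. Invoking Lemma~\ref{L2.2} (estimate \eqref{E2.3} with $s=2$, legitimate since $2<\tfrac{n}{n-2}$) gives $\|z_{\bar u,u-\bar u}\|_{L^2(\Omega)}\le C\|u-\bar u\|_{L^1(\Omega)}$, whence
\[
\rho\|z_{\bar u,u-\bar u}\|^2_{L^2(\Omega)}\le \rho C\|z_{\bar u,u-\bar u}\|_{L^2(\Omega)}\|u-\bar u\|_{L^1(\Omega)}.
\]
Choosing $\rho$ so that $\rho C\le \gamma/2$ absorbs the error term and leaves the lower bound $\tfrac{\gamma}{2}\|z_{\bar u,u-\bar u}\|_{L^2(\Omega)}\|u-\bar u\|_{L^1(\Omega)}$; finally \eqref{E2.15} converts $z_{\bar u,u-\bar u}$ into $z_{u,u-\bar u}$ at the cost of adjusting the constant.

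With this lower bound available, I would repeat the argument of Theorem~\ref{T4.4}. The local optimality of $u_\varepsilon$ yields $0\ge J'_\varepsilon(u_\varepsilon)(u_\varepsilon-\bar u)$, and the decomposition \eqref{E4.8} together with the estimates \eqref{E4.10}--\eqref{E4.12}, which remain unchanged, gives
\[
J'(u_\varepsilon)(u_\varepsilon-\bar u)\le C\big(\|\xi_\varepsilon\|_{L^2(\Omega)}+\|\eta_\varepsilon\|_{L^2(\Omega)}\big)\|z_{u_\varepsilon,u_\varepsilon-\bar u}\|_{L^2(\Omega)}.
\]
Combining this with the new lower bound for $\varepsilon<\varepsilon_1$ produces
\[
\frac{\gamma}{2}\|z_{u_\varepsilon,u_\varepsilon-\bar u}\|_{L^2(\Omega)}\|u_\varepsilon-\bar u\|_{L^1(\Omega)}\le C\big(\|\xi_\varepsilon\|_{L^2(\Omega)}+\|\eta_\varepsilon\|_{L^2(\Omega)}\big)\|z_{u_\varepsilon,u_\varepsilon-\bar u}\|_{L^2(\Omega)}.
\]
If $\|z_{u_\varepsilon,u_\varepsilon-\bar u}\|_{L^2(\Omega)}\neq 0$ I divide by it to obtain \eqref{E5.6}; if it vanishes then $u_\varepsilon-\bar u=\mathcal{A}z_{u_\varepsilon,u_\varepsilon-\bar u}+\tfrac{\partial f}{\partial y}(x,y_{u_\varepsilon})z_{u_\varepsilon,u_\varepsilon-\bar u}=0$, so $u_\varepsilon=\bar u$ and \eqref{E5.6} holds trivially.

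The main obstacle is the analogue of Lemma~\ref{L4.1}: one must verify that every error contribution $I_1,I_2,I_3$ from Step~I of Lemma~\ref{L4.1} is of order $\|z_{\bar u,u-\bar u}\|^2_{L^2(\Omega)}$ with an arbitrarily small prefactor, so that the $L^2$--$L^1$ conversion \eqref{E2.3} turns it into the mixed product $\|z_{\bar u,u-\bar u}\|_{L^2(\Omega)}\|u-\bar u\|_{L^1(\Omega)}$ matching the right-hand side of Assumption~\ref{A5}. This asymmetric bound, rather than a purely quadratic one, is exactly what lets the factor $\|z_{u_\varepsilon,u_\varepsilon-\bar u}\|_{L^2(\Omega)}$ cancel in the final step and upgrades the state estimate of Theorem~\ref{T4.4} to the control estimate \eqref{E5.6}. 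Note, moreover, that since \eqref{E2.3} gives $\|z_{\bar u,u-\bar u}\|_{L^2(\Omega)}\le C\|u-\bar u\|_{L^1(\Omega)}$, Assumption~\ref{A5} implies Assumption~\ref{A4}, so the state stability of Theorem~\ref{T4.4} remains available throughout.
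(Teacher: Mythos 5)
Your proposal is correct and follows essentially the same route as the paper: the paper's proof of Theorem~\ref{T5.1} likewise establishes Lemma~\ref{L5.1} by combining the error estimate \eqref{E4.7} (with $\rho=\gamma/(2C_2)$), Assumption~\ref{A5}, and the $L^2$--$L^1$ bound \eqref{E2.3} to absorb the remainder into the mixed product, and then repeats the proof of Theorem~\ref{T4.4} with \eqref{E4.9} replaced by \eqref{E5.7}, dividing by $\|z_{u_\varepsilon,u_\varepsilon-\bar u}\|_{L^2(\Omega)}$ at the end. Your explicit treatment of the degenerate case $\|z_{u_\varepsilon,u_\varepsilon-\bar u}\|_{L^2(\Omega)}=0$ and of the passage from $z_{\bar u,u-\bar u}$ to $z_{u,u-\bar u}$ via \eqref{E2.15} are details the paper leaves implicit, but they do not change the argument.
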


The proof of this theorem follows the steps of the one of Theorem \ref{T4.4} with Lemma \ref{L4.1} replaced by the following:

\begin{lemma}
Let $\bar u$ satisfy the assumptions of Theorem \ref{T5.1}. Then, there exists $\varepsilon > 0$ such that
\begin{equation}
J'(u)(u - \bar u) \ge \frac{\gamma}{2}\|z_{u,u - \bar u}\|_{L^2(\Omega)}\|u - \bar u\|_{L^1(\Omega)}\ \forall u \in \Uad \text{ with } \|y_u - \bar y\|_{C(\bar\Omega)} < \varepsilon,
\label{E5.7}
\end{equation}
where $\gamma$ is given in Assumption \ref{A5}.
\label{L5.1}
\end{lemma}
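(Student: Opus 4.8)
The plan is to follow the two-step scheme of the proof of Lemma \ref{L4.1} essentially unchanged, adjusting only the shape of the right-hand side from the square $\|z_{\bar u,u-\bar u}\|_{L^2(\Omega)}^2$ to the product $\|z_{\bar u,u-\bar u}\|_{L^2(\Omega)}\|u-\bar u\|_{L^1(\Omega)}$ demanded by Assumption \ref{A5}. I would recycle the Hamiltonian $H(x,y,\varphi,u) = L(x,y,u) + \varphi[u-f(x,y)]$ and the auxiliary adjoint state $\psi_{\bar u,v}$ solving $\mathcal A^*\psi_{\bar u,v} + \frac{\partial f}{\partial y}(x,\bar y)\psi_{\bar u,v} = \frac{\partial^2 H}{\partial y^2}(x,\bar y,\bar\varphi,\bar u)z_{\bar u,v}$, together with the identity $\int_\Omega \psi_{\bar u,v}v\dx = J''(\bar u)v^2$ already derived in the proof of Lemma \ref{L4.1}.

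The single genuinely new step is to upgrade the cross-term bound of Step I of Lemma \ref{L4.1}. That step yields, for every $\rho>0$ and a matching $\varepsilon>0$, the estimate $\big|\int_\Omega(\varphi_u-\bar\varphi-\psi_{\bar u,u-\bar u})(u-\bar u)\dx\big| \le \rho\|z_{\bar u,u-\bar u}\|_{L^2(\Omega)}^2$ whenever $\|y_u-\bar y\|_{C(\bar\Omega)}<\varepsilon$. Since $z_{\bar u,u-\bar u}$ solves \eqref{E2.9} with right-hand side $u-\bar u \in L^1(\Omega)\cap H^{-1}(\Omega)$ and nonnegative coefficient $\frac{\partial f}{\partial y}(x,\bar y)\in L^\infty(\Omega)$, Lemma \ref{L2.2} with $s=2$ gives $\|z_{\bar u,u-\bar u}\|_{L^2(\Omega)} \le C\|u-\bar u\|_{L^1(\Omega)}$ with $C$ independent of $\bar u$. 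Substituting this into one of the two $L^2$ factors converts the quadratic bound into the mixed bound $\big|\int_\Omega(\varphi_u-\bar\varphi-\psi_{\bar u,u-\bar u})(u-\bar u)\dx\big| \le \rho C\|z_{\bar u,u-\bar u}\|_{L^2(\Omega)}\|u-\bar u\|_{L^1(\Omega)}$.

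With this in hand I would close as in Step II. Writing $J'(u)(u-\bar u)=\int_\Omega(\varphi_u+g)(u-\bar u)\dx$ and inserting $\pm(\bar\varphi+\psi_{\bar u,u-\bar u})$, the functional splits into the cross term plus $J'(\bar u)(u-\bar u)+J''(\bar u)(u-\bar u)^2$; bounding the former by the mixed estimate and the latter below by Assumption \ref{A5} yields $J'(u)(u-\bar u)\ge(\gamma-\rho C)\|z_{\bar u,u-\bar u}\|_{L^2(\Omega)}\|u-\bar u\|_{L^1(\Omega)}$. Choosing $\rho$ with $\rho C=\gamma/4$ and shrinking $\varepsilon$ so that \eqref{E2.15} (with $X=L^2(\Omega)$) gives $\|z_{\bar u,u-\bar u}\|_{L^2(\Omega)}\ge\frac23\|z_{u,u-\bar u}\|_{L^2(\Omega)}$ produces exactly \eqref{E5.7}. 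I do not expect a serious obstacle: all the analytic work is inherited from Lemma \ref{L4.1}, and the only essential idea is the conversion $\|z_{\bar u,u-\bar u}\|_{L^2(\Omega)}^2 \le C\|z_{\bar u,u-\bar u}\|_{L^2(\Omega)}\|u-\bar u\|_{L^1(\Omega)}$ supplied by the $L^1\to L^2$ regularity estimate \eqref{E2.3}; the only care needed is the bookkeeping of constants so as to land on the stated factor $\gamma/2$.
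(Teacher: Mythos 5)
Your proposal is correct and follows essentially the same route as the paper: the paper's proof likewise invokes \eqref{E4.7} (there with the choice $\rho=\gamma/(2C_2)$), Assumption \ref{A5}, and the $L^1\to L^2$ estimate \eqref{E2.3} to turn the quadratic cross-term bound into the mixed bound $\|z_{\bar u,u-\bar u}\|_{L^2(\Omega)}\|u-\bar u\|_{L^1(\Omega)}$. If anything, your explicit final appeal to \eqref{E2.15} to pass from $z_{\bar u,u-\bar u}$ to $z_{u,u-\bar u}$ is a bookkeeping step the paper glosses over (its displayed chain ends with $z_{\bar u,u-\bar u}$ while \eqref{E5.7} is stated with $z_{u,u-\bar u}$), so your version is, if anything, slightly more careful.
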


\begin{proof}
We use \eqref{E4.7} with $\rho = \frac{\gamma}{2C_2}$, Assumption \ref{A5}, and \eqref{E2.3} to deduce for $\varepsilon > 0$ small enough
\begin{align*}
&J'(u)(u - \bar u) = \int_\Omega(\varphi_u + g)(u - \bar u)\dx\\
& = \int_\Omega(\varphi_u - \bar\varphi - \psi_{\bar u,u - \bar u})(u - \bar u)\dx + \int_\Omega(\bar\varphi + g + \psi_{\bar u,u - \bar u})(u - \bar u)\dx\\
&\ge -\frac{\gamma}{2C_2}\|z_{\bar u,u - \bar u}\|^2_{L^2(\Omega)} + [J'(\bar u)(u - \bar u) + J''(\bar u)(u - \bar u)^2]\\
& \ge -\frac{\gamma}{2}\|z_{\bar u,u - \bar u}\|_{L^2(\Omega)}\|u - \bar u\|_{L^1(\Omega)} + \gamma\|z_{\bar u,u - \bar u}\|_{L^2(\Omega)}\|u - \bar u\|_{L^1(\Omega)},
\end{align*}
which proves \eqref{E5.7}.
\end{proof}

\begin{proof}[Proof of Theorem \ref{T5.1}] We follow the proof of Theorem \ref{T4.4} replacing the estimate \eqref{E4.9} by \eqref{E5.7} to deduce with \eqref{E4.8} and \eqref{E4.10}--\eqref{E4.12} the inequality
\begin{align*}
0 \ge J'_\varepsilon(u_\varepsilon)(u_\varepsilon - \bar u) &\ge \frac{\gamma}{2}\|z_{u_\varepsilon,u_\varepsilon - \bar u}\|_{L^2(\Omega)}\|u_\varepsilon - \bar u\|_{L^1(\Omega)}\\
& - C_1\|z_{u_\varepsilon,u_\varepsilon - \bar u}\|_{L^2(\Omega)}\Big(\|\xi_\varepsilon\|_{L^2(\Omega)} + \|\eta_\varepsilon\|_{L^2(\Omega)}\Big).
\end{align*}
Then, dividing this inequality by $\|z_{u_\varepsilon,u_\varepsilon - \bar u}\|_{L^2(\Omega)}$ we get
\[
\|u_\varepsilon - \bar u\|_{L^1(\Omega)} \le \frac{2C_1}{\gamma}\Big(\|\xi_\varepsilon\|_{L^2(\Omega)} + \|\eta_\varepsilon\|_{L^2(\Omega)}\Big),
\]
which proves \eqref{E5.6} with $C = \frac{2C_1}{\gamma}$. 
\end{proof}

\section{Some final state stability results}
\label{S6}

In this section we see how Assumption \ref{A5} allows us to prove Lipschitz stability for the optimal states for more general perturbations of \Pb. Here, we consider simultaneous perturbations on the control and state variables of \Pb:
\[
\Pbe \  \min_{u \in \Uad} J_\epsilon(u) := \int_\Omega L_\varepsilon(x,y^\varepsilon_{u}(x),u(x))\dx,
\]
where $y^\varepsilon_u$ is the solution of \eqref{E4.1} and for every $\epsilon > 0$
\[
L_\varepsilon(x,y,u) = L_0(x,y) + \eta_\varepsilon y + g_\varepsilon u + \frac{\varepsilon}{2}u^2.
\]

As in Section \ref{S4}, we assume that $\{\xi_{\varepsilon}\}_{\varepsilon > 0}$ and $\{\eta_{\varepsilon}\}_{\varepsilon > 0}$ are bounded families in $L^2(\Omega)$  satisfying that $(\xi_\varepsilon,\eta_\varepsilon)\to (0,0)$ in $L^2(\Omega)^2$ as $\varepsilon \to 0$. Moreover, we suppose that $\|g_\varepsilon - g\|_{L^\infty(\Omega)} \to 0$ as $\varepsilon \to 0$. Under these assumptions, it is immediate to check that \Pbe is an approximation of \Pb in the sense of Theorems \ref{T4.2} and \ref{T4.3}. Moreover, we have the following Lipschitz stability property for the optimal states:

\begin{theorem}
Let $\bar u$ be a local minimizer of \Pb satisfying Assumption \ref{A5} and $\{u_\varepsilon\}_{\varepsilon < \varepsilon_0}$ a family of local solutions of problems \Pbe such that $u_\varepsilon \stackrel{*}{\rightharpoonup} \bar u$ in $L^\infty(\Omega)$ as $\varepsilon \to 0$. Then, there exist $\hat\varepsilon \in (0,\varepsilon_0)$ and  a constant $C > 0$ such that
\begin{equation}
\|y^\varepsilon_{u_\varepsilon} - \bar y\|_{L^2(\Omega)} \le C\Big(\|\xi_\varepsilon\|_{L^2(\Omega)} + \|\eta_\varepsilon\|_{L^2(\Omega)} + \|g_\varepsilon - g\|_{L^\infty(\Omega)} + \varepsilon\Big) \quad \forall \varepsilon < \hat\varepsilon,
\label{E6.1}
\end{equation}
where $\bar y = y_{\bar u}$.
\label{T6.1}
\end{theorem}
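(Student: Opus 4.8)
The plan is to follow closely the proofs of Theorems \ref{T4.4} and \ref{T5.1}, the only genuinely new features being the perturbation $g_\varepsilon$ of the linear coefficient and the Tikhonov term $\frac{\varepsilon}{2}u^2$. As before, I would start from the first order optimality condition for the local minimizer $u_\varepsilon$ of \Pbe, which gives $0 \ge J_\varepsilon'(u_\varepsilon)(u_\varepsilon - \bar u)$. Since $L_\varepsilon(x,y,u) = L_0(x,y) + \eta_\varepsilon y + g_\varepsilon u + \frac{\varepsilon}{2}u^2$, we have $\frac{\partial L_\varepsilon}{\partial y} = \frac{\partial L}{\partial y} + \eta_\varepsilon$, while the control-dependent part contributes $\int_\Omega(g_\varepsilon + \varepsilon u_\varepsilon)(u_\varepsilon - \bar u)\dx$ to the derivative. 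Writing $\varphi^\varepsilon_{u_\varepsilon}$ for the adjoint state of the perturbed problem and using the state--adjoint duality exactly as in \eqref{E4.8}, I would decompose
\[
J_\varepsilon'(u_\varepsilon)(u_\varepsilon - \bar u) = J'(u_\varepsilon)(u_\varepsilon - \bar u) + T_1 + T_2 + T_3 + T_4 + T_5,
\]
where $T_1,T_2,T_3$ are precisely the three integrals of \eqref{E4.8} (the mismatch of $\frac{\partial L}{\partial y}$ evaluated at $y^\varepsilon_{u_\varepsilon}$ and $y_{u_\varepsilon}$, the mismatch of the linearized states $z^\varepsilon$ and $z$, and the $\eta_\varepsilon$ term), while $T_4 = \int_\Omega(g_\varepsilon - g)(u_\varepsilon - \bar u)\dx$ and $T_5 = \varepsilon\int_\Omega u_\varepsilon(u_\varepsilon - \bar u)\dx$ are new.

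The terms $T_1,T_2,T_3$ would be estimated verbatim as in \eqref{E4.10}--\eqref{E4.12}, using \eqref{E4.3} and \eqref{E4.4}, yielding a bound of the form $C_1\big(\|\xi_\varepsilon\|_{L^2(\Omega)} + \|\eta_\varepsilon\|_{L^2(\Omega)}\big)\|z_{u_\varepsilon,u_\varepsilon - \bar u}\|_{L^2(\Omega)}$. The two new terms are controlled by the crude estimates $|T_4| \le \|g_\varepsilon - g\|_{L^\infty(\Omega)}\|u_\varepsilon - \bar u\|_{L^1(\Omega)}$ and $|T_5| \le \varepsilon\max\{|u_a|,|u_b|\}\|u_\varepsilon - \bar u\|_{L^1(\Omega)}$, the latter because $u_\varepsilon \in \Uad$ is bounded in $L^\infty(\Omega)$. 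For the leading term, the weak$^*$ convergence $u_\varepsilon \stackrel{*}{\rightharpoonup} \bar u$ together with Theorem \ref{T2.1} gives $\|y_{u_\varepsilon} - \bar y\|_{C(\bar\Omega)} \to 0$, so for $\varepsilon$ small Lemma \ref{L5.1} applies and furnishes the lower bound \eqref{E5.7}, i.e. $J'(u_\varepsilon)(u_\varepsilon - \bar u) \ge \frac{\gamma}{2}\|z_{u_\varepsilon,u_\varepsilon - \bar u}\|_{L^2(\Omega)}\|u_\varepsilon - \bar u\|_{L^1(\Omega)}$.

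Collecting these and abbreviating $a = \|z_{u_\varepsilon,u_\varepsilon - \bar u}\|_{L^2(\Omega)}$, $b = \|u_\varepsilon - \bar u\|_{L^1(\Omega)}$, $\delta = \|\xi_\varepsilon\|_{L^2(\Omega)} + \|\eta_\varepsilon\|_{L^2(\Omega)}$, and $\mu = \|g_\varepsilon - g\|_{L^\infty(\Omega)} + \varepsilon$, the inequality $0 \ge J_\varepsilon'(u_\varepsilon)(u_\varepsilon - \bar u)$ becomes
\[
\frac{\gamma}{2}\,a\,b \le C_1\,\delta\,a + C_2\,\mu\,b.
\]
The main obstacle is that, in contrast with Theorem \ref{T5.1}, the error terms no longer pair with a single norm: $T_1,T_2,T_3$ carry the factor $a$, whereas $T_4,T_5$ carry the factor $b$. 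Dividing by $a$, as in the proof of Theorem \ref{T5.1}, would leave the uncontrolled ratio $b/a$. The resolution I propose is to divide instead by $b$ (the case $b = 0$ being trivial, since then $u_\varepsilon = \bar u$ and only the $\xi_\varepsilon$-perturbation of the state survives) and to invoke the crucial $L^1 \to L^2$ smoothing estimate \eqref{E2.3} with $s = 2$, which gives $a \le C_0 b$ and hence $a/b \le C_0$. This yields $\frac{\gamma}{2}a \le C_1 C_0\,\delta + C_2\,\mu$, that is, $\|z_{u_\varepsilon,u_\varepsilon - \bar u}\|_{L^2(\Omega)} \le C(\delta + \mu)$.

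Finally, I would conclude exactly as at the end of the proof of Theorem \ref{T4.4}: splitting $\|y^\varepsilon_{u_\varepsilon} - \bar y\|_{L^2(\Omega)} \le \|y^\varepsilon_{u_\varepsilon} - y_{u_\varepsilon}\|_{L^2(\Omega)} + \|y_{u_\varepsilon} - \bar y\|_{L^2(\Omega)}$, bounding the first summand by \eqref{E4.3} and the second by a multiple of $\|z_{u_\varepsilon,u_\varepsilon - \bar u}\|_{L^2(\Omega)}$ via \eqref{E2.14} and \eqref{E2.15}, which together with the previous estimate delivers \eqref{E6.1}. I expect the only delicate point to be the bookkeeping that keeps the $a$-type and $b$-type contributions separate until the very last division by $b$; everything else is a routine transcription of the estimates already established in Sections \ref{S4} and \ref{S5}.
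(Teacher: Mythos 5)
Your proposal is correct and takes essentially the same route as the paper: the same decomposition of $J'_\varepsilon(u_\varepsilon)(u_\varepsilon-\bar u)$, the same lower bound from Lemma \ref{L5.1}, the estimates \eqref{E4.10}--\eqref{E4.12} for the old terms, the crude $L^1$--$L^\infty$ bounds for the $g_\varepsilon-g$ and Tikhonov terms, and the conclusion via \eqref{E2.14} and \eqref{E4.3}. Your device of dividing by $\|u_\varepsilon-\bar u\|_{L^1(\Omega)}$ and controlling the ratio $\|z_{u_\varepsilon,u_\varepsilon-\bar u}\|_{L^2(\Omega)}/\|u_\varepsilon-\bar u\|_{L^1(\Omega)}$ by \eqref{E2.3} is algebraically identical to the paper's step of first using \eqref{E2.3} to convert the $\|z_{u_\varepsilon,u_\varepsilon-\bar u}\|_{L^2(\Omega)}$-weighted error terms into $\|u_\varepsilon-\bar u\|_{L^1(\Omega)}$-weighted ones and then dividing.
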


\begin{proof}
Similarly to \eqref{E4.8} we have
\begin{align*}
0 &\ge J'_\varepsilon(u_\varepsilon)(u_\varepsilon - \bar u)  = J'(u_\varepsilon)(u_\varepsilon - \bar u) + \int_\Omega(\varepsilon u_\varepsilon + g_\varepsilon - g)(u_\varepsilon - \bar u)\dx\\
& + \int_\Omega\Big[\frac{\partial L}{\partial y}(x,y^\varepsilon_{u_\varepsilon},u_\varepsilon) - \frac{\partial L}{\partial y}(x,y_{u_\varepsilon},u_\varepsilon)\Big]z_{u_\varepsilon,u_\varepsilon - \bar u}\dx\\
&+ \int_\Omega\frac{\partial L}{\partial y}(x,y^\varepsilon_{u_\varepsilon},u_\varepsilon) (z^\varepsilon_{u_\varepsilon,u_\varepsilon - \bar u} - z_{u_\varepsilon,u_\varepsilon - \bar u})\dx + \int_\Omega\eta_\varepsilon z^\varepsilon_{u_\varepsilon,u_\varepsilon - \bar u}\dx.
\end{align*}
Then, using \eqref{E5.7} and \eqref{E4.10}--\eqref{E4.12} we obtain with \eqref{E2.3}
\begin{align*}
0 &\ge \frac{\gamma}{2}\|z_{u_\varepsilon,u_\varepsilon - \bar u}\|_{L^2(\Omega)}\|u_\varepsilon - \bar u\|_{L^1(\Omega)} - \Big(\varepsilon\|u_\varepsilon\|_{L^\infty(\Omega)} + \|g_\varepsilon - g\|_{L^\infty(\Omega)}\Big)\|u_\varepsilon - \bar u\|_{L^1(\Omega)}\\
& - C_1\|z_{u_\varepsilon,u_\varepsilon - \bar u}\|_{L^2(\Omega)}\Big(\|\xi_\varepsilon\|_{L^2(\Omega)} + \|\eta_\varepsilon\|_{L^2(\Omega)}\Big) \ge \frac{\gamma}{2}\|z_{u_\varepsilon,u_\varepsilon - \bar u}\|_{L^2(\Omega)}\|u_\varepsilon - \bar u\|_{L^1(\Omega)}\\
& - C'\Big(\varepsilon + \|g_\varepsilon - g\|_{L^\infty(\Omega)} + \|\xi_\varepsilon\|_{L^2(\Omega)} + \|\eta_\varepsilon\|_{L^2(\Omega)}\Big)\|u_\varepsilon - \bar u\|_{L^1(\Omega)},
\end{align*}
where $C' = \max\{1,|u_a|,|u_b|,C_1C_2\}$. Dividing the above expression by $\|u_\varepsilon - \bar u\|_{L^1(\Omega)}$ and using \eqref{E2.14} we infer
\[
\|y_{u_\varepsilon} - \bar y\|_{L^2(\Omega)} \le \frac{4C'}{\gamma}\Big(\varepsilon + \|g_\varepsilon - g\|_{L^\infty(\Omega)} + \|\xi_\varepsilon\|_{L^2(\Omega)} + \|\eta_\varepsilon\|_{L^2(\Omega)}\Big).
\]
Now, the rest follows as in the proof of Theorem \ref{T4.4}.
\end{proof}

\bibliographystyle{plain}

\end{document}